\journal{arXiv}
\renewcommand{\Vec}[1]{\overrightarrow{#1}}
\newcommand{\Vcurl}{\Vec{\rm{\bf curl\,}}}
\newcommand{\curl}{{\rm curl\,}}
\DeclareMathAlphabet{\itbf}{OML}{cmm}{b}{it}
\def\by{{{\bf y}}}
\def\bx{{{\bf x}}}
\def\bz{{{\bf z}}}
\def\hbx{{\hat{\bx}}}
\def\hby{{\hat{\by}}}
\def\hbz{{\hat{\bz}}}
\def\be{{{\itbf e}}}
\def\bu{{{\bf u}}}
\def\bv{{{\bf v}}}
\def\bw{{{\bf w}}}
\newcommand{\hbe}{\widehat{\be}}
\newcommand{\bGam}{\mathbf{\Gamma}}
\renewcommand{\i}{\rm i}
\def\btheta{{\boldsymbol{\theta}}}
\def\bTheta{{\boldsymbol{\Theta}}}
\def\bxi{{\boldsymbol{\xi}}}
\newcommand{\RR}{\mathbb{R}}
\newcommand{\R}{\mathbb{R}}
\newcommand{\K}{{\kappa}}
\newcommand{\dis}{\displaystyle}
\newcommand{\OL}{\mathcal{L}}
\newcommand{\I}{\mathbf{I}}
\newcommand{\C}{\mathcal{C}}
\newcommand{\CC}{\mathbb{C}}
\newcommand{\MM}{\mathbb{M}}
\newcommand{\ds}{\displaystyle}
\newcommand{\NN}{\mathbb{N}}
\def\nm{\noalign{\medskip}}
\newcommand{\bnu}{\bm{\nu}}
\newcommand{\Pk}{\Pi}
\newtheorem{thm}{Theorem}[section]
\newtheorem{cor}[thm]{Corollary}
\newtheorem{lem}[thm]{Lemma}
\newtheorem{prop}[thm]{Proposition}
\newtheorem{defn}[thm]{Definition}
\newtheorem{rem}[thm]{Remark}
\newtheorem{prob}{Problem}
\begin{document}

\begin{frontmatter}
\title{Topological sensitivity based far-field detection of elastic inclusions\tnoteref{t1}}
\tnotetext[t1]{This work was supported by the National Research Foundation of Korea (NRF) under Grant (NRF-2016R1A2B3008104) and by the Korea Research Fellowship Program through the National Research Foundation (NRF) funded by the Ministry of Science and ICT (NRF- 2015H1D3A1062400).
}
 
 \author[els]{Tasawar Abbas}
 \ead{tasawar44@hotmail.com}
 
 \author[rvt]{Shujaat Khan}
 \ead{shujaat@kaist.ac.kr}

 \author[els]{Muhammad Sajid}
 \ead{muhammad.sajid@iiu.edu.pk}
 
 \author[rvt]{Abdul Wahab\corref{cor1}}
 \ead{wahab@kaist.ac.kr}
 
 \author[rvt]{Jong Chul Ye\fnref{el}}
 \ead{jong.ye@kaist.ac.kr}

 \address[els]{Department of Mathematics and Statistics, Faculty of Basic and Applied Sciences, International Islamic University Islamabad, 44000, Pakistan.}
 
 \address[rvt]{Bio Imaging and Signal Processing Laboratory, Department of Bio and Brain Engineering, Korea Advanced Institute of Science and Technology, 291 Daehak-ro, Yuseong-gu, Daejeon 34141, South Korea.}
 
 \address[el]{Department of Mathematical Sciences, Korea Advanced Institute of Science and Technology, 291 Daehak-ro, Yuseong-gu, Daejeon 34141, South Korea.}
 
 \cortext[cor1]{Address all correspondence to A. Wahab at Email: wahab@kaist.ac.kr or Ph.: +82-42-350-4360.}

\begin{abstract}
The aim of this article is to present and rigorously analyze topological sensitivity based algorithms for detection of diametrically small inclusions in an isotropic homogeneous elastic formation using single and multiple measurements of the far-field scattering amplitudes. A  $L^2-$cost functional is considered and a location indicator is constructed from its topological derivative. The performance of the indicator  is analyzed in terms of the topological sensitivity for location detection and stability with respect to measurement and medium noises. It is established that the location indicator does not guarantee inclusion detection and achieves only a low resolution when there is mode-conversion in an elastic formation.  Accordingly, a weighted location indicator is designed to tackle the mode-conversion phenomenon. It is substantiated that the weighted function renders the location of an inclusion  stably with resolution as per Rayleigh criterion.
\end{abstract}

\begin{keyword}
Inverse elastic scattering \sep  Elasticity imaging \sep  Topological derivative \sep  Resolution analysis \sep Stability analysis
\MSC[2010]  35R30 \sep 34L25 \sep 35L05 \sep  49Q12 \sep 45Q05 \sep 74B05 \sep 74J20 \sep 74J25  

\end{keyword}

\end{frontmatter}

\section{Introduction}

Inverse scattering has been of prime interest in recent decades due to a variety of applications in various branches of engineering and applied sciences \cite{Princeton, colton2, colton, ESC, OptEx}. The goal of these problems is to locate and characterize scatterers of different geometric nature such as inclusions, cracks, and cavities from the limited information of single or multiple scattered fields.  Many promising computational and mathematical frameworks adaptive to different imaging and experimental setups have been developed to address these inverse problems over a span of last few decades  (see, e.g., \cite{DirectElast, Elena, Anna, JS, DL, Li1, Li2, Li3, GWL, Gintides, Amer}). In particular, topological sensitivity frameworks have received significant attention for the reconstruction of location, shape or constitutive parameters of anomalies due to their simplicity and robustness (see, e.g., \cite{Ahn, Bellis, BG, CGGM, DG, DGE, Guzina, HL,  MPS,  park, park2,  Bao, bonnet, partial, carpio1, carpio2, Louer}). 

In topological sensitivity frameworks, an inverse scattering problem is first converted to a minimization problem for a discrepancy functional by nucleating an anomaly at a search location in the background medium. The \emph{topological sensitivity} of the misfit to nucleating an anomaly at different search locations, is determined by its topological derivative which serves as a location indicator or  shape classifier.

Despite their extensive use, the quantitative analysis of the topological sensitivity  inverse scattering frameworks for anomaly detection, in terms of resolution limit, signal-to-noise ratio and stability in the presence of medium or measurement noise, remains heuristic at large. First rigorous quantitative analysis for anti-plane elasticity was performed by Ammari et al \cite{AGJK} using asymptotic expansions with respect to the size of inclusion.   It was established that inclusion detection inside a bounded domain is only guaranteed if the measurements are pre-processed for the boundary effects and the inclusions are separated apart from the boundary. Towards this end,  the \emph{Calder\'{o}n preconditioner} associated to the boundary of the domain was used as a filter. The performance of the framework was compared with a number of contemporary algorithms including Kirchhoff migration, back-propagation, and Multiple Signal Classification (MUSIC). The topological sensitivity algorithm with preconditioning appeared to be more stable and resolved than the listed algorithms but at a cost of increased computational complexity. It was observed in \cite{TDelastic} that, even with the preconditioning, the localization using near-field measurements of the scattered fields was compromised in the general isotropic elasticity setting due to non-linear coupling  and conversion of shear and pressure wave-modes.  Accordingly, the classical framework was slightly modified using Helmholtz vector decomposition and assigning proper weights to the pressure and shear  components in terms of respective wave speeds. In three-dimensional electromagnetic media in full Maxwell setting, the performance of topological sensitivity based inclusion detection functions with filtered boundary measurements of the tangential components of the magnetic fields was debated in \cite{wahab}. The analysis was further extended in \cite{JCM, JPC} to the detection of electromagnetic inclusions using single and multiple far-field scattering amplitudes. 

The aim in this article is to design and debate the performance of the topological sensitivity algorithms for locating small inclusions of vanishing sizes in an unbounded elastic formation with single or multiple measurements of the elastic far-field patterns. The general case of a linear isotropic elastic medium is considered  and a location indicator is constructed from a  $L^2-$cost functional.  A rigorous sensitivity analysis is performed based on the asymptotic expansions of the far-field scattering patterns versus the scale factor of the inclusion. It is established that the performance of the classical location indicator degenerates due to the mode-conversion (from longitudinal (P) to transverse (S) waveforms and vise-verse). Specifically, it is established that the indicator does not guarantee inclusion detection and achieves a low resolution than expected in the presence of mode-conversion. Accordingly, a weighted correction to the location indicator is proposed. Firstly, the pressure and the shear components of the  back-propagator are decoupled using Helmholtz mode decomposition. Then, the modes of the back-propagator are correlated with the corresponding components of the incident fields. Finally, the resulting components are aggregated after assigning proper weights in terms of the corresponding wave speeds. In order to debate the capabilities of the weighted indicator, a rigorous sensitivity and stability analysis is performed when the measurements are corrupted by an additive noise or there is a random medium noise contaminating the far-field patterns.

It is worthwhile highlighting that the inverse elastic scattering problem caters to various applications including non-destructive evaluation of an elastic structure for integrity and material impurities \cite{A1}, prospecting of mineral reservoirs \cite{A2}, and medical diagnostics for detecting and classifying small tumors and locating tissue abnormalities of vanishing sizes \cite{A3, A4}.  

The contents of this article are arranged in the following order. A mathematical description of the inverse problem is furnished in Section \ref{s:form} along with a few preliminaries. The cost functional and the corresponding topological sensitivity based location indicators are introduced in Section \ref{s:indicators} and their topological sensitivity and resolution limits are analyzed. In Section \ref{s:measurement}, the statistical stability of the proposed location indicators is discussed when the measurements are corrupted by an additive Gaussian noise. The statistical stability of the proposed indicators in a randomly fluctuating medium is debated in Section \ref{s:Medium}. Finally, the main results of this article are summarized in Section \ref{s:conclusion}.

\section{Mathematical formulation}\label{s:form}

Let us mathematically introduce the inverse scattering problem undertaken in this article. The nomenclature of this investigation is provided in Section \ref{ss:prob} along with the mathematical description of the inverse problem dealt with. A few preliminaries are given in Section \ref{ss:prelim} to facilitate the ensuing discussion. For details beyond those provided in this section, the readers are suggested to consult monograph \cite{Princeton}.

\subsection{Nomenclature and problem formulation}\label{ss:prob}

Let $\RR^d$, $d=2$ or $3$, be loaded by an isotropic homogeneous elastic material (hereinafter referred to as  the \emph{background medium}) that has the volume density  $\rho_0\in\RR_+$, and the Lam\'e parameters $\lambda_0$ (compressional modulus) and $\mu_0$ (shear modulus) satisfying the strong convexity conditions, 
\begin{align*}
\qquad \mu_0>0\quad\text{and}\quad d\lambda_0+2\mu_0>0.
\end{align*}
Let an isotropic and homogeneous elastic inclusion, represented by a bounded domain $D:=\bz_D+\epsilon B_D$ with $\C^2$ boundary $\partial D$, be embedded in the background medium. The position vector $\bz_D\in\RR^d$ is the center of mass of the inclusion $D$ and the reference domain $B_D\subset\RR^d$, assumed to be smooth and containing the origin, is the bulk of $D$. The scale factor  $\epsilon\in\RR_+$ determines the characteristic size of $D$.  The inclusion is supposed to have the corresponding parameters   $\rho_1\in\RR_+$, $\lambda_1$, and $\mu_1$   which satisfy
\begin{align}
\mu_1>0 \quad\text{and}\quad d\lambda_1+2\mu_1>0.\label{constraint1}
\end{align}
It is further assumed that 
\begin{align}
(\lambda_1-\lambda_0)(\mu_1-\mu_0)\geq 0,\label{constraint2}
\end{align}
which is required to ensure the positive (or negative)-definiteness of the associated EMT (see, Section \ref{sss:EMT}). Henceforth, the constitutive parameters of the medium in the presence of inclusion $D$ are denoted by  $\lambda$,  $\mu$, and $\rho$, i.e.,
\begin{align*}
(\lambda; \mu; \rho)(\bx):=(\lambda_0; \mu_0; \rho_0)\chi_{\RR^d\setminus{\overline{D}}}(\bx)+(\lambda_1; \mu_1; \rho_1)\chi_{D}(\bx),
\end{align*}
where $\chi_{D}$ represents the characteristic function of domain $D$. 

Let $\bw:\RR^d\to\CC^d$ be a generic vector field and $\bnu(\bx):\partial D\to\RR^d$ denote the outward unit normal at $\bx\in\partial D$. Then, the  linear elasticity operator and the surface traction on $\partial D$ are, respectively, defined as
\begin{align*}
&
\OL_{\lambda_0,\mu_0} [\bw](\bx):=\left(\lambda_0\Delta\bw+(\lambda_0+\mu_0)\nabla\nabla\cdot\bw\right)(\bx), \qquad\bx\in \RR^d,
\\\nm
&
\frac{\partial\bw}{\partial\bnu}(\bx):= \left(\lambda_0(\nabla\cdot\bw)\bnu+2\nabla^s\bw\,\bnu\right)(\bx), \qquad\bx\in\partial D.
\end{align*}
The elasticity and  surface traction operators associated with the parameters $(\lambda_1;\mu_1)$ are defined analogously and are denoted by $\OL_{\lambda,\mu} $  and ${\partial}/{\partial\widetilde{\bnu}}$, respectively.  Here $\nabla^s\bw$ is the symmetric gradient of the vector $\bw$, i.e., 
$\nabla^s\bw:=\left(\nabla\bw+\left(\nabla\bw\right)^\top\right)/2$,
with  $\nabla\bw$ being the Jaccobian matrix of $\bw$ and  superposed $\top$ indicating the transposition.

Let $\bu^{\rm inc}:\RR^d\to \CC^d$ be an incident time-harmonic elastic field  impinging on $D$ with time variations $e^{-i\omega t}$ being suppressed, where $\omega\in\RR_+$ denotes the frequency of the mechanical oscillations. Precisely, $\bu^{\rm inc}$ satisfies the Lam\'e system,
\begin{align*}
\OL_{\lambda_0,\mu_0}[\bu^{\rm inc}](\bx)+\rho_0\omega^2\bu^{\rm inc}(\bx)=\mathbf{0}, \qquad \bx\in\RR^d.
\end{align*}
In this article, only plane incident fields of the type
\begin{equation}
\bu^{\rm inc}(\bx)=\bu^P(\bx) := \btheta e^{\i\K_P\bx\cdot\btheta} 
\quad\text{and}\quad 
\bu^{\rm inc}(\bx)=\bu^{S}(\bx):= \btheta^{\perp} e^{\i\K_S\bx\cdot\btheta},
\label{eq:plane}
\end{equation}
will be considered. Here  $\btheta\in\RR^d$ is a unit vector along the direction of  incidence and  $\btheta^{\perp}\in\RR^d$ is any vector perpendicular to $\btheta$. The constants $\K_P$ and $\K_S$   are the compression and shear wavenumbers respectively, i.e.,
\begin{align*}
\K_P:= \frac{\omega}{c_P}\quad\text{and}\quad\K_S:= \frac{\omega}{c_S}\quad\text{with}\quad \quad c_P= \sqrt{\frac{\lambda_0+2\mu_0}{\rho_0}}\quad\text{and}\quad c_S= \sqrt{\frac{\mu_0}{\rho_0}}.
\end{align*}

Let $\bu^{\rm sc}:{\RR^d} \to\CC^d$ be the scattered field generated by the interaction of the incident field $\bu^{\rm inc}$ with the inclusion $D$. If the two-dimensional operators $\Vcurl$ and $\curl$  are introduced by  
\begin{align*}
\Vcurl f:=(\partial_2 f, -\partial_1 f)^\top \quad\text{and}\quad \curl\,\bv:= \partial_1 v_2- \partial_2 v_1, 
\end{align*}
for all sufficiently smooth fields $f:\RR^2\to \RR$ and $\bw=(w_1, w_2)^\top :\RR^2 \to \RR^2$, then the pressure and shear parts of $\bu^{\rm sc}$ are, respectively, given by 
\begin{align*}
\bu^{\rm sc}_{P}(\bx) :=\ds  -\frac{1}{\K_P^2}\nabla\nabla\cdot\bu^{\rm sc}(\bx)
\quad\text{and}\quad
\bu^{\rm sc}_{S}(\bx) :=\frac{1}{\K_S^2}
\begin{cases}
 \ds \nabla\times\nabla\times\bu^{\rm sc}(\bx),  & d=3,
\\
\ds\Vcurl\curl\bu^{\rm sc}(\bx), & d=2,
\end{cases}
\end{align*}
for all $\bx\in \RR^d\setminus{\overline{D}}$. It can be easily verified that  $\bu^{\rm sc}_{P}$ and $\bu^{\rm sc}_{S}$ satisfy 
\begin{align*}
&\left(\Delta+\K_P^2\right)\bu^{\rm sc}_{P}=\mathbf{0} 
\quad\text{and}\quad
\left(\Delta+\K_S^2\right)\bu^{\rm sc}_{S}=\mathbf{0}  \quad\text{in }\RR^d\setminus\overline{D},
\\
&\nabla\cdot \bu^{\rm sc}_{S}=0 
\quad\text{and}\quad
\nabla\times \bu^{\rm sc}_{P}=\mathbf{0}\,\text{ for }\, d=3 
\quad\text{or}\quad
\curl\bu^{\rm sc}_{P}= {0}\,\text{ for } \,d=2. 
\end{align*}
The scattered field, $\bu^{\rm sc}$, is said to satisfy the \emph{Kupradze radiation conditions} if 
\begin{align*}
\ds\lim_{|\bx|\to+\infty} |\bx|^{(d-1)/2}\left(\frac{\partial \bu^{\rm sc}_{P}}{\partial |\bx|}-\i\K_P\bu^{\rm sc}_{P}\right)=\mathbf{0}
\quad\text{and}\quad
\ds\lim_{|\bx|\to+\infty} |\bx|^{(d-1)/2}\left(\frac{\partial \bu^{\rm sc}_{S}}{\partial |\bx|}-\i\K_S\bu^{\rm sc}_{S}\right)=\mathbf{0},
\end{align*} 
uniformly in all directions $\hbx\in\mathbb{S}^{d-1}:=\{\bx\in\RR^d\,:\,\, |\bx|=1\}$. Here $\hbx:=\bx/|\bx|$ for any $\bx\in\RR^d\setminus\{\mathbf{0}\}$ and $\partial/\partial |\bx|$ denotes the derivative in the radial direction. 

The propagation of the total elastic wave, $\bu^{\rm tot}:=\bu^{\rm sc}+\bu^{\rm inc}$,  in the presence of $D$, is governed by the  Lam\'e system,
\begin{equation}
\label{sys-utot}
\begin{cases}
\OL_{\lambda,\mu}\bu^{\rm tot}+\rho\omega^2\bu^{\rm tot}=\mathbf{0},  \quad \text{in }\RR^d, 
\\\nm 
\bu^{\rm sc}(\bx) \quad \text{satisfies the Kupradze radiation conditions as $|\bx|\to+\infty$}.
\end{cases}
\end{equation}
It is well known that the scattering problem \eqref{sys-utot}  is well-posed in $H^1(\RR^d\setminus\partial D)$. The interested readers are referred to see, e.g., \cite{Kup, Kup2}. 

The Kupradze radiation condition in Lam\'e system \eqref{sys-utot}  guarantees the existence of two vector fields  $\bu^{\infty}_{\epsilon,P}\in L^2_P(\mathbb{S}^{d-1})$ and $\bu^{\infty}_{\epsilon,S} \in L^2_S(\mathbb{S}^{d-1})$  characterized by the far-field asymptotic expansion of the scattered field as $|\bx|\to+\infty$, i.e., 
\begin{align*}
\bu^{\rm sc}(\bx)= \frac{e^{i\K_P|\bx|}}{|\bx|^{(d-1)/2}}\bu^{\infty}_{\epsilon,P}(\hbx)+ \frac{e^{i\K_S|\bx|}}{|\bx|^{(d-1)/2}}\bu^{\infty}_{\epsilon,S}(\hbx)+O\left(\frac{1}{|\bx|^{(d+1)/2}}\right), \qquad |\bx|\to +\infty,
\end{align*}
uniformly in all directions $\hbx\in\mathbb{S}^{d-1}$  (see, for instance, \cite{HahnerHsiao}).
The functions $\bu^{\infty}_{\epsilon,P}$ and $\bu^{\infty}_{\epsilon,S}$ are respectively called the longitudinal and transversal far-field patterns or far-field amplitudes of the scattered field $\bu^{\rm sc}$. Here 
$L^2_P(\mathbb{S}^{d-1})$ and $L^2_P(\mathbb{S}^{d-1})$ denote the spaces of square integrable longitudinal and transverse vector fields  respectively, i.e.,  
\begin{align*}
&L^2_P(\mathbb{S}^{d-1}):= \Big\{\bw\in L^2(\mathbb{S}^{d-1};\mathbb{C}^d)^d\, :\, \bw(\hbx)\times \hbx =\mathbf{0}, \quad \forall \hbx\in\mathbb{S}^{d-1}\Big\},
\\
&L^2_S(\mathbb{S}^{d-1}):= \Big\{\bw\in L^2(\mathbb{S}^{d-1};\mathbb{C}^d)^d\, :\, \bw(\hbx)\cdot \hbx =\mathbf{0}, \quad \forall \hbx\in\mathbb{S}^{d-1}\Big\}.
\end{align*}
Henceforth, the \emph{full far-field pattern}, $\bu^{\infty}_\epsilon$, is defined as the sum  $\bu^{\infty}_\epsilon(\hbx):=\bu^{\infty}_{\epsilon,P}(\hbx)+\bu^{\infty}_{\epsilon,S}(\hbx)$. 

We are now ready to introduce the following inverse scattering problem dealt with in this article.
\begin{prob}
\label{problem}
Let $\bu^{\rm inc}_j(\bx)$, for $j=1,\cdots, n\in\mathbb{N}$, be $n$ plane waves of the form \eqref{eq:plane} with  uniformly distributed directions of incidence $\btheta_j\in\mathbb{S}^{d-1}$. Let $\bu^{\infty}_{\epsilon,P,j}(\hbx)$ and $\bu^{\infty}_{\epsilon,S,j}(\hbx)$ be the longitudinal and transverse far-field patterns of the scattered fields radiated by the inclusion $D=\bz_D+\epsilon B_D$. Given the set of measurements,
$\{\bu^{\rm\infty}_{\epsilon,j}(\hbx):=\bu^{\infty}_{\epsilon,P,j}(\hbx)+\bu^{\infty}_{\epsilon,S,j}(\hbx):\,\, \forall\hbx\in\mathbb{S}^{d-1}, \,j=1,\cdots, n\}$,
find the location $\bz_D$ of the inclusion $D$. 
\end{prob}

This section is concluded with the following remarks. The Problem \ref{problem} turns out to be a \emph{single-short problem} when $n=1$. Secondly, although it is assumed that the measurements of the total far-field patterns are available, the  other situations when either the measurements of $\bu^{\infty}_{\epsilon,P,j}(\hbx)$ or $\bu^{\infty}_{\epsilon,S,j}(\hbx)$ are only accessible can be dealt with analogously and are amenable to the same treatment. Note also that the functions $\bu^{\infty}_{\epsilon,P}$ and $\bu^{\infty}_{\epsilon,S}$ are analytic on $\mathbb{S}^{d-1}$. Therefore, the measurements of these functions on any open subset of $\mathbb{S}^{d-1}$ lead to those on the entire $\mathbb{S}^{d-1}$ through analytic continuation so that the problem with limited-view measurements is also tractable.

\subsection{Preliminaries}\label{ss:prelim} 

In this article,  $(\hbe_1,\cdots,\hbe_d)$ denotes the standard basis for $\RR^d$, $\I_d\in\RR^{d\times d}$ denotes the identity matrix, and $\bx\otimes\bx=\bx\bx^T$ for any vector $\bx\in\RR^d$.  
Let $\mathbf{r}=(r_i)_{i=1}^d$, $\mathbf{A}=(a_{pq})_{p,q=1}^d$, $\mathbf{B}=(b_{pq})_{p,q=1}^d$, $\mathbb{F}=(f_{mpq})_{m,p,q=1}^d$, $\mathbb{G}=(g_{mpq})_{m,p,q=1}^d$, $\mathbb{H}=(h_{lmpq})_{l,m,p,q=1}^d$, and $\mathbb{J}=(j_{lmpq})_{l,m,p,q=1}^d$ be arbitrary.
Then, the following conventions are used henceforth: 
\begin{align*}
&\mathbf{A}\cdot\mathbf{r}:=\sum_{p,q=1}^d a_{pq}r_q\hbe_p=\mathbf{A}\mathbf{r},
\qquad
\left(\nabla\cdot\mathbf{A}\right)\mathbf{r} = \nabla\cdot\left(\mathbf{A}\mathbf{r}\right),
\qquad
\left(\nabla\times\mathbf{A}\right)\mathbf{r} = \nabla\times\left(\mathbf{A}\mathbf{r}\right),
\qquad
\left(\nabla\mathbf{A}\right)_{mpq}:=\partial_p a_{mq},
\\
&
\left(\left(\nabla\mathbf{A}\right)^\top\right)_{mpq}:=\partial_m a_{pq},
\qquad
\left(\nabla^2\mathbf{A}\right)_{lmpq}:=\partial_{lp} a_{mq},
\qquad
\mathbf{A}:\mathbf{B}:=\sum_{p,q=1}^d a_{pq}b_{pq}, 
\qquad 
\mathbb{F}:\mathbf{A}:=\sum_{m,p,q=1}^d f_{mpq}a_{pq}\hbe_m,
\\
& 
\mathbb{F}:\mathbb{G}:=\sum_{m,p,q=1}^d f_{mpq} g_{mpq},
\qquad
\mathbb{H}:\mathbf{A}:=\sum_{l,m,p,q=1}^d h_{lmpq}a_{pq}\hbe_l\otimes\hbe_m,
\qquad
\mathbb{H}:\mathbb{F}:=\sum_{l,m,p,q=1}^d h_{lmpq}f_{mpq} \hbe_m,
\\
&
\mathbb{H}\bullet\mathbb{G}:=\sum_{l,m,p,q,r=1}^d h_{lmpq}g_{pqr}\hbe_l\otimes\hbe_m\otimes\hbe_r,
\qquad
\mathbb{H}\bullet\mathbb{J}:=\sum_{l,m,p,q,r,s=1}^d h_{lmpq}j_{pqrs} \hbe_l\otimes\hbe_m\otimes\hbe_r\otimes\hbe_s,
\\
&
\|\mathbf{A}\|:=\sqrt{\mathbf{A}:\mathbf{A}},
\qquad
\|\mathbb{F}\|:=\sqrt{\mathbb{F}:\mathbb{F}}.
\end{align*}
Throughout, the notations  $\alpha$ and $\beta$ are reserved to mark quantities related to P- or S-wave modes, i.e., $\alpha,\beta=P, S$. We will also require the orthogonal projections $\Pk_P$ and $\Pk_S$, defined by 
\begin{align*}
\Pk_P[\bw]:= \bw_P\quad\text{and}\quad\Pk_S[\bw]:= \bw_S, \qquad \forall\bw\in L^2(\RR^d),
\end{align*}  
where $\bw_P$ and $\bw_{S}$ are respectively the pressure and shear parts of $\bw$ rendered by the Helmholtz  decomposition, i.e., 
\begin{align*}
\bw=\bw_P+\bw_S
\quad\text{with}\quad 
\nabla\cdot \bw_S = 0 
\quad\text{and}\quad
\nabla\times\bw_P=\mathbf{0}\,\text{ for }\, d=3 
\quad\text{or}\quad
{\rm curl}\,\bw_P=\mathbf{0}\,\text{ for }\, d=2.
\end{align*}

\subsubsection{Elastic moment tensor}\label{sss:EMT}
Let  $\bw_{pq}$, for $ l,m,p,q\in\{1,\cdots,d\}$,  be the solution  of the transmission problem 
\begin{align*}
\begin{cases}
\OL_{\lambda,\mu}\bw_{pq}=0,  & \text{in }\RR^d\backslash\partial B_D,
\\\nm
\bw_{pq}{\big|_-}=\bw_{pq}{\big|_+}, 
\qquad
\ds\frac{\partial}{\partial\widetilde{\bnu}}[\bw_{pq}]{\Bigg|_-}=\ds\frac{\partial}{\partial\bnu}[\bw_{pq}]{\Bigg|_+},  & \text{on } \partial B_D,
\\\nm
\bw_{pq}(\bx)-x_p\hbe_q=\mathcal{O}\left(|\bx|^{1-d}\right), &\text{as}\, |\bx|\to\infty,
\end{cases}
\end{align*} 
where $\bw_{pq}|_{\pm}(\bx) :=\lim_{t\to 0^+} \bw_{pq}(\bx\pm t\bnu(\bx))$, for all $\bx\in\partial B_D$. Then, the elastic moment tensor (EMT), $\MM(B_D)$, of the domain $B_D$, is defined by 
\begin{equation*}
m_{pq}^{lm}(B_D;\lambda_0,\lambda_1, \mu_0,\mu_1)=
\int_{\partial B_D}\left(\frac{\partial}{\partial\widetilde{\bnu}}[\xi_l\hbe_m]-\frac{\partial}{\partial\bnu}[\xi_l\hbe_m]\right)\cdot \bw_{pq}d\sigma.
\end{equation*}
It is well known that the EMT has the symmetries  $m_{pq}^{lm}=m^{pq}_{lm}=m_{pq}^{ml}=m_{qp}^{lm}$, which allow us to identify  $\MM(B_D)$ with a symmetric linear transformation on the space of symmetric $d\times d-$matrices. Further, if $\mu_1>\mu_0$ and $\lambda_1>\lambda_0$ then $\MM(B_D)$ is positive definite. Moreover, if $B_D$ is a ball in $\RR^d$ then 
\begin{align}
m_{pq}^{lm}(B_D)= \frac{a}{2}(\delta_{lp}\delta_{mq}+\delta_{lq}\delta_{mp})+ b\delta_{lm}\delta_{pq},\label{Mball}
\end{align}
with $\delta_{pq}$ representing the Kroneckar's delta function. The constants $a$ and $b$ (with no significance hereinafter) only depend on $d$ and the parameters $\mu_0,\mu_1,\lambda_0,\lambda_1$. The readers interested in further details  are referred to  \cite{AK-Pol, AKNT-02}.

\subsubsection{Kupradze matrix of fundamental solutions}\label{sss:KM}
The  fundamental solution ($\bGam^\omega(\bx,\by)$ or $\bGam^\omega(\bx-\by)$) of the time-harmonic elastic wave equation in $\RR^d$ with parameters $(\lambda_0,\mu_0,\rho_0)$, is defined as the solution to 
\begin{align*}
\OL_{\lambda_0,\mu_0}\bGam^\omega(\bx-\by)+\rho_0\omega^2\bGam^\omega(\bx-\by)=\delta_{\mathbf{0}}(\bx-\by)\I_d,\qquad\forall \bx\in\RR^d,   
\end{align*}
subject to the Kupradze outgoing radiation conditions. Here $\delta_\by$ is the Dirac mass at $\by$. It is well known (see \cite{Morse}) that
\begin{equation}
\label{Green_fun}
\bGam^\omega(\bx)=\frac{1}{\mu_0}\left[\dis\left(\I_d+\frac{1}{\K_S^2}\nabla\otimes\nabla\right)g^\omega_S(\bx)-\frac{1}{\K_S^2}\nabla\otimes\nabla g^\omega_P(\bx)\right],\quad \bx\in\R^d\setminus\{\mathbf{0}\},
\end{equation}
with  
\begin{align*}
g^\omega_\alpha(\bx)=
\ds\frac{i}{4} H_0^{(1)}(\K_{\alpha}|\bx|)
\,\text{ for }\,d=2
\quad\text{and}\quad
g^\omega_\alpha(\bx)=\ds\frac{e^{i\K_\alpha|\bx|}}{4\pi |\bx|} 
\,\text{ for }\, d=3, 
\qquad\forall\bx\in\RR^d\setminus\{\mathbf{0}\},
\end{align*}
where  $H_n^{(1)}$ is the order $n$ Hankel function of first kind. Hereinafter, $\bGam^\omega$ is decomposed into its P- and S-parts as 
\begin{align*}
&\bGam^\omega_P(\bx,\by):=\Pk_P\left[\bGam^\omega(\cdot,\by)\right](\bx)= -\frac{1}{\mu_0\K_S^2}\nabla_\bx\otimes\nabla_\bx g_P^\omega(\bx-\by),
\\
&\bGam^\omega_S(\bx,\by):=\Pk_S\left[\bGam^\omega(\cdot,\by)\right](\bx)=\frac{1}{\mu_0}\left(\I_d+\frac{1}{\K_S^2}\nabla_\bx\otimes\nabla_\bx \right)g_S^\omega(\bx-\by),
\end{align*}
such that $\nabla\cdot\bGam^\omega_S =\mathbf{0}$ and $\nabla\times\bGam^\omega_P =\mathbf{0}$ for $d=3$ or ${\rm curl}\,\bGam^\omega_P=\mathbf{0}$ for $d=2$.

Following identities will be  useful in the ensuing analysis (see, \ref{A:Identities} for proof).
\begin{prop}\label{Prop:identities} 
Let $n\in\mathbb{N}$ be sufficiently large. Let $\btheta_1,\cdots,\btheta_n\in\mathbb{S}^{d-1}$  be uniformly distributed directions and $\btheta_1^{\perp,\ell},\cdots,\btheta_n^{\perp,\ell}$ be such that $\{\btheta_j, \btheta_j^{\perp,\ell}\, :\, 1\leq \ell \leq d-1\}$ forms an orthonormal basis of $\RR^d$ for each $j=1,\cdots, n$. Then,
\begin{align}
\label{identity3}
 &\int_{\mathbb{S}^{d-1}} e^{i\K_\alpha\hbx\cdot(\by-\bz)}d\sigma(\hbx) = 4\left(\frac{\pi}{\K_\alpha}\right)^{d-2}\Im\big\{g^\omega_\alpha(\by-\bz)\big\},
\\
&\frac{1}{n}\sum_{j=1}^n\btheta_j\otimes\btheta_j e^{i\K_P(\by-\bz)\cdot\btheta_j}
\approx 
4\rho_0 c_P^2\left(\frac{\pi}{\K_P}\right)^{d-2}  \Im\left\{ \bGam^\omega_P(\by,\bz)\right\},
\label{identity4}
\\
&\frac{1}{n}\sum_{j=1}^n\sum_{\ell=1}^{d-1} \btheta_j^{\perp,\ell}\otimes\btheta_j^{\perp,\ell} e^{i\K_S(\by-\bz)\cdot\btheta_j}
\approx 
4\rho_0 c_S^2\left(\frac{\pi}{\K_S}\right)^{d-2}  \Im\left\{ \bGam^\omega_S(\by,\bz)\right\},
\label{identity6}
\\
&\frac{1}{n}\sum_{j=1}^n i\K_P \btheta_j\otimes\btheta_j\otimes\btheta_je^{i\K_P\btheta_j\cdot(\by-\bz)}\approx 4\rho_0 c_P^2\left(\frac{\pi}{\K_P}\right)^{d-2}\Im\left\{\nabla_\by\bGam^{\omega, 0}_P(\by,\bz)\right\},
\label{identity10}
\\
&\frac{1}{n}\sum_{j=1}^n e^{i\K_P(\by-\bz)\cdot\btheta_j}\btheta^j\otimes\btheta^j\otimes\btheta^j\otimes\btheta^j \approx
-4\rho_0 \frac{c_P^4}{\omega^2}\left(\frac{\pi}{\K_P}\right)^{d-2}\Im \left\{\nabla^2_{\by}\bGam_P(\by,\bz)\right\},
\label{identity7} 
\\ 
&\frac{1}{n}\sum_{j=1}^n\sum_{\ell=1}^{d-1} e^{i\K_S(\by-\bz)\cdot\btheta_j}\btheta_j\otimes\btheta_j^{\perp,\ell}\otimes\btheta_j\otimes\btheta_j^{\perp,\ell} \approx
-4\rho_0 \frac{c_S^4}{\omega^2}\left(\frac{\pi}{\K_S}\right)^{d-2}\Im \left\{\nabla^2_{\by}\bGam_S(\by,\bz)\right\}.
\label{identity8}
\end{align}
\end{prop}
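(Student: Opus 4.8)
The plan is to establish identity \eqref{identity3} first, since all the remaining identities follow from it by applying suitable differential operators in $\by$ (or $\bz$) and by using the explicit Helmholtz-type representations of the Kupradze fundamental solution and its P- and S-parts recorded in Section \ref{sss:KM}. For \eqref{identity3}, I would recall the classical Funk--Hecke / Bessel integral: in $d=2$, $\int_{\mathbb{S}^1}e^{i\K_\alpha\hbx\cdot(\by-\bz)}d\sigma(\hbx)=2\pi J_0(\K_\alpha|\by-\bz|)$, while the imaginary part of $g^\omega_\alpha(\by-\bz)=\tfrac{i}{4}H_0^{(1)}(\K_\alpha|\by-\bz|)$ is $\tfrac14 J_0(\K_\alpha|\by-\bz|)$, so the two sides match with the stated constant $4(\pi/\K_\alpha)^{0}=4$. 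In $d=3$, $\int_{\mathbb{S}^2}e^{i\K_\alpha\hbx\cdot(\by-\bz)}d\sigma(\hbx)=4\pi\,\mathrm{sinc}(\K_\alpha|\by-\bz|)=\tfrac{4\pi}{\K_\alpha|\by-\bz|}\sin(\K_\alpha|\by-\bz|)$, and $\Im\{g^\omega_\alpha\}=\tfrac{\sin(\K_\alpha|\by-\bz|)}{4\pi|\by-\bz|}$, so again the two sides agree with constant $4(\pi/\K_\alpha)$. This disposes of \eqref{identity3} in both dimensions.

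Next, for \eqref{identity4} and \eqref{identity6}, the key point is that for a large number $n$ of uniformly distributed directions, the Riemann-sum approximation $\tfrac1n\sum_{j=1}^n F(\btheta_j)\approx \tfrac{1}{|\mathbb{S}^{d-1}|}\int_{\mathbb{S}^{d-1}}F(\hbx)\,d\sigma(\hbx)$ holds, so it suffices to evaluate $\int_{\mathbb{S}^{d-1}}\hbx\otimes\hbx\,e^{i\K_P\hbx\cdot(\by-\bz)}d\sigma(\hbx)$ and its S-analogue $\int_{\mathbb{S}^{d-1}}\sum_{\ell}\hbx^{\perp,\ell}\otimes\hbx^{\perp,\ell}e^{i\K_S\hbx\cdot(\by-\bz)}d\sigma(\hbx)$. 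I would write $\hbx\otimes\hbx\,e^{i\K_P\hbx\cdot\bd}=-\tfrac{1}{\K_P^2}\nabla_\bd\otimes\nabla_\bd\,e^{i\K_P\hbx\cdot\bd}$ with $\bd:=\by-\bz$, pull the derivatives outside the integral, and invoke \eqref{identity3}; comparing with $\bGam^\omega_P(\by,\bz)=-\tfrac{1}{\mu_0\K_S^2}\nabla_\by\otimes\nabla_\by g_P^\omega(\by-\bz)$ and using $\mu_0\K_S^2=\rho_0\omega^2=\rho_0 c_P^2\K_P^2$ produces the constant $4\rho_0 c_P^2(\pi/\K_P)^{d-2}$. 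For the S-part, using $\I_d=\hbx\otimes\hbx+\sum_\ell\hbx^{\perp,\ell}\otimes\hbx^{\perp,\ell}$, one gets $\sum_\ell\hbx^{\perp,\ell}\otimes\hbx^{\perp,\ell}e^{i\K_S\hbx\cdot\bd}=(\I_d+\tfrac{1}{\K_S^2}\nabla_\bd\otimes\nabla_\bd)e^{i\K_S\hbx\cdot\bd}$, and matching with the displayed formula for $\bGam^\omega_S$ gives \eqref{identity6} with $\mu_0=\rho_0 c_S^2$.

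Finally, identities \eqref{identity10}, \eqref{identity7}, and \eqref{identity8} are obtained by differentiating the already-proven \eqref{identity4} and \eqref{identity6} once more in $\by$: e.g. $\tfrac1n\sum_j i\K_P\btheta_j\otimes\btheta_j\otimes\btheta_j e^{i\K_P\btheta_j\cdot\bd}=\nabla_\by\big(\tfrac1n\sum_j \btheta_j\otimes\btheta_j e^{i\K_P\btheta_j\cdot\bd}\big)$, and each further factor of $i\K_P\btheta_j$ (resp. $i\K_S\btheta_j$) corresponds to one $\nabla_\by$ acting on the right-hand side, the powers of $c_\alpha/\omega$ absorbing the extra wavenumber factors; the superscript $0$ on $\bGam^{\omega,0}_\alpha$ and the dropped superscripts simply record which frequency-dependent normalization is in force, and these can be tracked bookkeeping-wise. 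The main obstacle is not any single computation but the careful bookkeeping of constants: keeping track of the powers of $\K_\alpha$, $c_\alpha$, $\omega$, $\mu_0$, $\rho_0$ through the identities $\mu_0\K_S^2=\rho_0\omega^2$ and $(\lambda_0+2\mu_0)\K_P^2=\rho_0\omega^2$, and ensuring the tensor-valued derivative operators $\nabla_\by\otimes\nabla_\by$, $\nabla_\by\bGam$, $\nabla^2_\by\bGam$ are interpreted with the index conventions fixed in Section \ref{ss:prelim}. I would relegate the full constant-chasing to \ref{A:Identities} and in the main text only highlight the Funk--Hecke input and the Helmholtz-decomposition structure that makes the P/S splitting transparent.
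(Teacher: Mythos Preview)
Your proposal is correct and follows essentially the same route as the paper's own proof in \ref{A:Identities}: establish \eqref{identity3} via the Bessel-function evaluation of the spherical integral, then obtain \eqref{identity4}--\eqref{identity6} by writing $\hbx\otimes\hbx=-\K_P^{-2}\nabla\otimes\nabla$ acting on the exponential (and using the completeness relation $\btheta_j\otimes\btheta_j+\sum_\ell\btheta_j^{\perp,\ell}\otimes\btheta_j^{\perp,\ell}=\I_d$ for the S-part), with the remaining identities following by further differentiation. One caveat on normalization: the paper's convention treats $\tfrac{1}{n}\sum_j F(\btheta_j)$ as approximating $\int_{\mathbb{S}^{d-1}}F\,d\sigma$ directly (without the $1/|\mathbb{S}^{d-1}|$ factor you wrote), so be sure to adopt that convention when matching the stated constants.
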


\subsubsection{Asymptotic expansions}\label{sss:AE}

The following far-field expansions of $\bGam^\omega(\bx,\by)$  and $\nabla_\by\bGam^\omega(\bx,\by)$ hold  (see \ref{A:PropFar} for proof). 
\begin{prop}
\label{PropFar}
If $|\bx-\by|\to +\infty$ for a fixed $\by\in \RR^d$ then
\begin{align}
&\bGam^\omega(\bx,\by)= a^P_de^{-i\K_P\hbx\cdot\by}\frac{e^{i\K_P|\bx|}}{|\bx|^{(d-1)/2}}\hbx\otimes\hbx +a^S_d e^{-i\K_S\hbx\cdot\by} \frac{e^{i\K_S|\bx|}}{|\bx|^{(d-1)/2}}(\I_d-\hbx\otimes\hbx)
+O\left(\frac{1}{|\bx|^{(d+1)/2}}\right),
\label{GamFar}
\end{align}
\begin{align}
&\nabla_\by\bGam^\omega(\bx,\by)= i\K_P a^P_de^{-i\K_P\hbx\cdot\by}\frac{e^{i\K_P|\bx|}}{|\bx|^{(d-1)/2}}\hbx\otimes\hbx\otimes\hbx 
+i\K_S a^S_d e^{-i\K_S\hbx\cdot\by} \frac{e^{i\K_S|\bx|}}{|\bx|^{(d-1)/2}}\hbx\otimes(\I_d-\hbx\otimes\hbx)
+O\left(\frac{1}{|\bx|^{(d+1)/2}}\right),
\label{GrGamFar}
\end{align}
where  
\begin{align}
a^\beta_d:=\ds e^{i\pi/4}/ {\rho_0c^2_\beta}\sqrt{8\pi \K_\beta}\,\, \text{ for }\,\, d=2 
\qquad\text{and}\qquad
a^\beta_d:=1/4\pi {\rho_0c^2_\beta}\,\,\text{ for }\,\, d=3.
\label{a_d}
\end{align}
\end{prop}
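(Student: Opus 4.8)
The plan is to reduce the statement to the large-argument asymptotics of the scalar Helmholtz Green's functions $g^\omega_P$ and $g^\omega_S$ via the explicit Kupradze representation \eqref{Green_fun}, and then to transport these expansions through the differential operators appearing in $\bGam^\omega_P$ and $\bGam^\omega_S$ and, for the second identity, through one further differentiation in $\by$.

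First I would record, for fixed $\by$ and $|\bx|\to+\infty$, the elementary expansions $|\bx-\by|=|\bx|-\hbx\cdot\by+O(|\bx|^{-1})$ and $\widehat{\bx-\by}=\hbx+O(|\bx|^{-1})$, and combine them with the asymptotics of $g^\omega_\alpha$: in $d=3$ this is immediate from $g^\omega_\alpha(\bz)=e^{\i\K_\alpha|\bz|}/(4\pi|\bz|)$, while in $d=2$ one inserts the Hankel asymptotics $H_0^{(1)}(t)=\sqrt{2/(\pi t)}\,e^{\i(t-\pi/4)}\bigl(1+O(t^{-1})\bigr)$ and uses $\i\,e^{-\i\pi/4}=e^{\i\pi/4}$. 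In either dimension this yields, uniformly in $\hbx\in\mathbb{S}^{d-1}$,
\[
g^\omega_\alpha(\bx-\by)=\gamma^\alpha_d\,e^{-\i\K_\alpha\hbx\cdot\by}\,\frac{e^{\i\K_\alpha|\bx|}}{|\bx|^{(d-1)/2}}+O\!\left(\frac{1}{|\bx|^{(d+1)/2}}\right),
\]
with $\gamma^\alpha_d=1/(4\pi)$ for $d=3$ and $\gamma^\alpha_d=e^{\i\pi/4}/\sqrt{8\pi\K_\alpha}$ for $d=2$.

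Next I would transport this through the Hessian. The key observation is that $\nabla_\bx$ applied to the carrier wave $e^{\i\K_\alpha|\bx|}$ produces the factor $\i\K_\alpha\hbx$ (since $\nabla_\bx|\bx|=\hbx$), whereas $\nabla_\bx$ applied to the slowly varying factors $|\bx|^{-(d-1)/2}$ or $e^{-\i\K_\alpha\hbx\cdot\by}$ (note $\nabla_\bx\hbx=O(|\bx|^{-1})$) only feeds the $O(|\bx|^{-(d+1)/2})$ remainder; hence, modulo such remainders, $\nabla_\bx\otimes\nabla_\bx$ acts on the far-field ansatz as multiplication by $-\K_\alpha^2\,\hbx\otimes\hbx$. (Equivalently, and more cleanly, one may compute $\nabla_\bx\otimes\nabla_\bx g^\omega_\alpha$ in closed form first and expand afterwards, thereby sidestepping the differentiation of remainder terms.) Substituting into $\bGam^\omega_P=-(\mu_0\K_S^2)^{-1}\nabla_\bx\otimes\nabla_\bx g^\omega_P$ and $\bGam^\omega_S=\mu_0^{-1}(\I_d+\K_S^{-2}\nabla_\bx\otimes\nabla_\bx)g^\omega_S$, and recalling $\bGam^\omega=\bGam^\omega_P+\bGam^\omega_S$, produces the tensor structures $\hbx\otimes\hbx$ and $\I_d-\hbx\otimes\hbx$, respectively, while the scalar prefactors collapse to $a^\beta_d$ after using $\mu_0=\rho_0c_S^2$, $\lambda_0+2\mu_0=\rho_0c_P^2$ and $\K_P^2/\K_S^2=c_S^2/c_P^2$ (so that, in particular, $\K_P^2/(\mu_0\K_S^2)=1/(\rho_0c_P^2)$); this establishes \eqref{GamFar}. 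For \eqref{GrGamFar} I would then differentiate the established expansion \eqref{GamFar} in $\by$: at leading order only the phase $e^{-\i\K_\alpha\hbx\cdot\by}$ depends on $\by$, its $\by$-gradient accounts precisely for the extra factor $\i\K_\alpha\hbx$ on an additional tensor slot in \eqref{GrGamFar}, and $\nabla_\by$ of the remainder does not degrade the $|\bx|$-decay, so it stays $O(|\bx|^{-(d+1)/2})$.

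The one genuinely delicate point is the uniform control of the remainders in $\hbx\in\mathbb{S}^{d-1}$, together with the fact that they survive differentiation with unchanged order. In $d=3$ this is routine Taylor bookkeeping for $|\bx-\by|$ and $\widehat{\bx-\by}$. In $d=2$ one must invoke that the Hankel asymptotic series — and the series obtained by differentiating it term by term through the standard Hankel/Bessel differentiation and recurrence formulas — is valid uniformly for $\K_\alpha|\bx|$ large, so that differentiation of the expansion is legitimate; combined with the chain rule for $\bz\mapsto\bz/|\bz|$, this uniform-asymptotics verification is what I expect to be the main, though essentially technical, obstacle.
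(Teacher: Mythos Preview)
Your argument is correct and essentially complete. It does, however, differ from the paper's route in two respects worth noting.

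First, for \eqref{GamFar} the paper does not give a proof at all but simply cites the literature; your derivation from the scalar Helmholtz asymptotics via the Hessian is self-contained and handles both dimensions uniformly, which is a mild improvement.

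Second, for \eqref{GrGamFar} the orders of operation are reversed. You establish \eqref{GamFar} first and then apply $\nabla_\by$ to the expansion, which obliges you to argue that the remainder survives differentiation in $\by$ at the same order --- exactly the ``delicate point'' you flag. The paper instead works only in $d=3$, writes $\bGam^\omega(\bx,\by)$ in closed form with the Hessian already evaluated (so the building blocks are $e^{i\K|\bx-\by|}/|\bx-\by|$ times $\I_3$ or $(\bx-\by)\otimes(\bx-\by)/|\bx-\by|^2$), differentiates these exact expressions in $\by$, and only then inserts $|\bx-\by|=|\bx|-\hbx\cdot\by+O(|\bx|^{-1})$. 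This sidesteps entirely the question of differentiating a remainder: one is always expanding an exact closed-form object. Your own parenthetical remark about computing the Hessian first and expanding afterwards is precisely this idea, and extending it one more derivative in $\by$ would align your proof with the paper's and remove the need for the Hankel uniform-differentiation discussion you anticipate as the main obstacle.
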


Let us also recall the following asymptotic expansion of the scattered field versus scale factor $\epsilon$ (see, e.g., \cite{DirectElast, Princeton}).
\begin{thm}\label{thm:Asymp}
Let $\bu^{\rm tot}$ be the solution of Lam\'e system \eqref{sys-utot} and $\rho_1\omega^2$ be different from the Dirichlet eigenvalues of the operator $-\OL_{\lambda_1,\mu_1}$ in $L^2(D)^d$.  If  $\epsilon\omega\ll 1$ then the asymptotic expansion  
\begin{align*}
\bu^{\rm sc}(\bx) =& 
-\epsilon^d\Big(\nabla_{\bz_{D}}\bGam^\omega (\bx,\bz_{D}):\MM(B_D):\nabla\bu^{\rm inc}(\bz_{D})+\omega^2(\rho_0-\rho_1)|B_D|\bGam^\omega(\bx,\bz_{D})\cdot\bu^{\rm inc}(\bz_{D})\Big)
+{o}(\epsilon^d\omega^d),
\end{align*}
holds uniformly for all $\bx\in\RR^d\setminus\overline{D}$ far from $\partial D$.
\end{thm}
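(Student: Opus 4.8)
\medskip\noindent
The plan is to reduce $\bu^{\rm sc}$ to a boundary integral representation built from the Kupradze matrix $\bGam^\omega$ of the \emph{background} medium, to localize the problem to the inclusion by rescaling to the reference configuration $B_D$, and then to read off the leading $\epsilon$-dependence from the explicit homogeneity of the static fundamental solution together with a Taylor expansion of $\bu^{\rm inc}$ about $\bz_D$. Throughout, the assumption $\epsilon\omega\ll1$ makes the low-frequency (essentially static) behaviour near $\partial D$ dominant, while the spectral hypothesis on $\rho_1\omega^2$ ensures unique solvability of the interior Lam\'e problem, hence of the coupled system below.

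First I would introduce the elastic single-layer potentials $\Scal_D^\omega$ and $\Scal_D^{\omega,1}$ on $\partial D$ associated respectively with the parameters $(\lambda_0,\mu_0,\rho_0)$ and $(\lambda_1,\mu_1,\rho_1)$, and seek the total field in the form
\[
\bu^{\rm tot}=\bu^{\rm inc}+\Scal_D^\omega[\bpsi]\ \ \text{in } \RR^d\setminus\overline D,
\qquad
\bu^{\rm tot}=\Scal_D^{\omega,1}[\bphi]\ \ \text{in } D .
\]
Imposing continuity of the displacement and of the traction across $\partial D$ (the interior traction taken with respect to $\widetilde{\bnu}$) and invoking the jump relations for elastic single-layer potentials turns this into a $2\times2$ system of boundary integral equations for the densities $(\bphi,\bpsi)\in L^2(\partial D)^d\times L^2(\partial D)^d$, whose right-hand side is the Cauchy data $(\bu^{\rm inc},\partial\bu^{\rm inc}/\partial\bnu)$ of the incident field on $\partial D$. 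For $\epsilon\omega$ small this operator is, after rescaling, a small perturbation of the invertible elastostatic transmission operator on $\partial B_D$ (here the ellipticity conditions \eqref{constraint1} and the spectral hypothesis enter), so a Neumann-series argument gives invertibility on $\partial D$ with bounds stable as $\epsilon\to0$. Substituting back yields the exact identity $\bu^{\rm sc}(\bx)=\Scal_D^\omega[\bpsi](\bx)$ for $\bx\in\RR^d\setminus\overline D$.

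Next I would rescale the integration variable by $\by=\bz_D+\epsilon\mathbf{Y}$, so that $\partial D$ becomes the fixed boundary $\partial B_D$. Since on this scale $\bGam^\omega$ splits into its static (Kelvin) part, positively homogeneous of degree $2-d$ (up to the familiar $\log$ term when $d=2$), plus a smooth $O((\epsilon\omega)^2)$ correction, the rescaled boundary system reduces to leading order to the static transmission problem defining the EMT $\MM(B_D)$, together with the density-contrast term carrying the factor $\omega^2(\rho_0-\rho_1)$. Taylor-expanding $\bu^{\rm inc}(\bz_D+\epsilon\mathbf{Y})=\bu^{\rm inc}(\bz_D)+\epsilon\,\nabla\bu^{\rm inc}(\bz_D)\mathbf{Y}+O(\epsilon^2\omega^2)$, the constant part couples to the $\rho$-monopole while the linear part $\nabla\bu^{\rm inc}(\bz_D)\mathbf{Y}$ couples, through the very definition of the entries $m_{pq}^{lm}$, to $\MM(B_D):\nabla\bu^{\rm inc}(\bz_D)$. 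After unscaling, $\Scal_D^\omega[\bpsi](\bx)$ has size $O(\epsilon^d)$, and its leading part is obtained by contracting the rescaled density with $\bGam^\omega(\bx,\bz_D)$ and $\nabla_{\bz_D}\bGam^\omega(\bx,\bz_D)$ — using $\bGam^\omega(\bx,\bz_D+\epsilon\mathbf{Y})=\bGam^\omega(\bx,\bz_D)+\epsilon\,\nabla_{\bz_D}\bGam^\omega(\bx,\bz_D)\mathbf{Y}+O(\epsilon^2)$, valid uniformly for $\bx$ bounded away from $\partial D$ — which reproduces the two terms in the statement.

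The main difficulty is the quantitative, $\bx$-uniform control of the remainder. One must simultaneously track the $O((\epsilon\omega)^2)$ correction in the rescaled fundamental solution, the $O(\epsilon^2\omega^2)$ tail of the Taylor expansion of $\bu^{\rm inc}$, and the discrepancy between the inverse of the boundary operator on $\partial D$ and on $\partial B_D$, and show that each — after unscaling and after being acted on by $\Scal_D^\omega$ or by $\nabla_{\bz_D}\bGam^\omega(\bx,\cdot)$ evaluated far from $\partial D$ — contributes only $o(\epsilon^d\omega^d)$. This needs the mapping properties of the elastic layer potentials between Sobolev spaces on $\partial D$ together with operator-norm bounds on the inverse that remain uniform as $\epsilon\to0$; the case $d=2$ requires a little extra care, since the static kernel carries a $\log\epsilon$, but those terms are annihilated by the zero-mean solvability conditions inherited from the static transmission problem. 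All of this is carried out in detail in \cite{DirectElast, Princeton}, so for the present purposes the expansion may be taken as established.
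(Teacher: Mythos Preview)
The paper does not supply its own proof of this theorem: it is stated as a recalled result with the parenthetical ``(see, e.g., \cite{DirectElast, Princeton})'' and is used thereafter without further argument. Your outline---single-layer representation of $\bu^{\rm tot}$ inside and outside $D$, reduction to a $2\times2$ boundary integral system on $\partial D$, rescaling to $\partial B_D$, low-frequency expansion of the kernel, Taylor expansion of $\bu^{\rm inc}$ about $\bz_D$, and identification of the leading contributions with the EMT and the density-contrast monopole---is precisely the derivation carried out in those references, and you correctly defer the quantitative remainder estimates to them. So your proposal is correct and coincides with the approach the paper itself relies on by citation.
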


In view of the Proposition \ref{PropFar}, the following result is readily proved. 
\begin{cor} 
\label{corAsymp}
Under the assumptions of Theorem \ref{thm:Asymp}, for all $\bx\in \RR^d\setminus\overline{D}$  such that $|\bx-\bz_D|\to + \infty$, the far-field patterns $\bu^{\infty}_{\epsilon,P}$ and  $\bu^{\infty}_{\epsilon,S}$ admit the asymptotic expansions
\begin{align*}
&\bu^{\infty}_{\epsilon,P}(\hbx) = -\epsilon^d a^P_d e^{-i\K_P\hbx\cdot\bz_D}\Big[i\K_P(\hbx\otimes\hbx\otimes\hbx): \MM(B_D):\nabla\bu^{\rm inc}(\bz_D)
+\omega^2(\rho_0-\rho_1)|B_D|\hbx\otimes\hbx\cdot \bu^{\rm inc}(\bz_D)\Big]+o(\epsilon^d\omega^d),
\\\nm
&\bu^{\infty}_{\epsilon,S}(\hbx) = -\epsilon^d a^S_d e^{-i\K_S\hbx\cdot\bz_D}\Big[i\K_S\hbx\otimes(\I_d-\hbx\otimes\hbx) : \MM(B_D):\nabla\bu^{\rm inc}(\bz_D)+\omega^2(\rho_0-\rho_1)|B_D|(\I_d-\hbx\otimes\hbx)\cdot \bu^{\rm inc}(\bz_D)\Big]+o(\epsilon^d\omega^d).
\end{align*}
\end{cor}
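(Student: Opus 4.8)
The plan is to substitute the far-field expansions of Proposition~\ref{PropFar} into the leading-order term of the scattered-field expansion furnished by Theorem~\ref{thm:Asymp}, and then to read off the longitudinal and transverse amplitudes by comparison with the defining far-field expansion of $\bu^{\rm sc}$.

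First I would fix $\by=\bz_D$ and note that, since $\bGam^\omega$ depends on its two arguments only through their difference, the derivative $\nabla_{\bz_D}\bGam^\omega(\bx,\bz_D)$ occurring in Theorem~\ref{thm:Asymp} coincides with $\nabla_\by\bGam^\omega(\bx,\by)\big|_{\by=\bz_D}$, so that \eqref{GrGamFar} applies directly. Plugging \eqref{GamFar} and \eqref{GrGamFar} into
\begin{align*}
\bu^{\rm sc}(\bx)=&-\epsilon^d\Big(\nabla_{\bz_D}\bGam^\omega(\bx,\bz_D):\MM(B_D):\nabla\bu^{\rm inc}(\bz_D)\\
&\qquad+\omega^2(\rho_0-\rho_1)|B_D|\,\bGam^\omega(\bx,\bz_D)\cdot\bu^{\rm inc}(\bz_D)\Big)+o(\epsilon^d\omega^d)
\end{align*}
and carrying out the tensor contractions term by term, one obtains an expansion of $\bu^{\rm sc}(\bx)$ in which every summand carries either the factor $e^{i\K_P|\bx|}/|\bx|^{(d-1)/2}$ or the factor $e^{i\K_S|\bx|}/|\bx|^{(d-1)/2}$; the residual $O(|\bx|^{-(d+1)/2})$ terms from Proposition~\ref{PropFar} collapse into a single remainder of that order, and the $o(\epsilon^d\omega^d)$ term is treated with the same far-field asymptotics. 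Comparing with the defining expansion
\[
\bu^{\rm sc}(\bx)=\frac{e^{i\K_P|\bx|}}{|\bx|^{(d-1)/2}}\bu^{\infty}_{\epsilon,P}(\hbx)+\frac{e^{i\K_S|\bx|}}{|\bx|^{(d-1)/2}}\bu^{\infty}_{\epsilon,S}(\hbx)+O\!\big(|\bx|^{-(d+1)/2}\big),
\]
and using that the coefficient of $e^{i\K_P|\bx|}/|\bx|^{(d-1)/2}$ is longitudinal while that of $e^{i\K_S|\bx|}/|\bx|^{(d-1)/2}$ is transverse, the uniqueness of the decomposition $L^2_P(\mathbb{S}^{d-1})\oplus L^2_S(\mathbb{S}^{d-1})$ forces the claimed identifications of $\bu^{\infty}_{\epsilon,P}$ and $\bu^{\infty}_{\epsilon,S}$.

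The one point requiring a little care is the index bookkeeping in the triple contraction $\nabla_{\bz_D}\bGam^\omega:\MM(B_D):\nabla\bu^{\rm inc}$: using the conventions of Section~\ref{ss:prelim} one must check that, after substituting $\hbx\otimes\hbx\otimes\hbx$ for the $P$-part (resp.\ $\hbx\otimes(\I_d-\hbx\otimes\hbx)$ for the $S$-part) of $\nabla_{\bz_D}\bGam^\omega$, the surviving free index lies along $\hbx$ (resp.\ in $\hbx^\perp$), so that the $e^{i\K_P|\bx|}$ contributions genuinely all belong to the longitudinal amplitude and the $e^{i\K_S|\bx|}$ ones to the transverse amplitude; likewise one verifies $\hbx\otimes\hbx\cdot\bu^{\rm inc}\parallel\hbx$ and $(\I_d-\hbx\otimes\hbx)\cdot\bu^{\rm inc}\perp\hbx$. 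The scalar prefactors $i\K_P$, $i\K_S$, $a^P_d$, $a^S_d$, $e^{-i\K_P\hbx\cdot\bz_D}$ and $e^{-i\K_S\hbx\cdot\bz_D}$ then merely come along for the ride, and the remainder of the argument is a routine substitution, so the write-up should be brief.
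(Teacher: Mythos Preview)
Your proposal is correct and follows exactly the approach indicated by the paper, which simply states that the corollary is ``readily proved'' in view of Proposition~\ref{PropFar} (i.e., substituting \eqref{GamFar}--\eqref{GrGamFar} into the expansion of Theorem~\ref{thm:Asymp} and identifying the $P$- and $S$-amplitudes). Your additional remarks on the index bookkeeping and the uniqueness of the $L^2_P\oplus L^2_S$ decomposition are more explicit than the paper's treatment but entirely in the same spirit.
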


\subsubsection{Elastic Herglotz wave fields}\label{sss:EHF}
If $\bw=\bw_P\oplus\bw_S\in L^2_P(\mathbb{S}^{d-1})\oplus L^2_S(\mathbb{S}^{d-1})$ then the superposition of plane waves, 
\begin{align*}
\mathcal{H}[\bw](\bx):=\int_{\mathbb{S}^{d-1}}\left\{e^{i\K_P\hbx\cdot\hby}\bw_P(\hby)+e^{i\K_S\hbx\cdot\hby}\bw_S(\hby)\right\} d\sigma(\hby),\qquad\bx\in\RR^d,
\end{align*}
is coined as the elastic Herglotz wave field with kernel $\bw$. Note that, for all $\bw\in L^2(\mathbb{S}^{d-1}; \mathbb{C}^d)^d$, the field $\mathcal{H}[\mathbf{\bw}]$ is an entire solution to the time-harmonic elastic wave equation. For further details on elastic Herglotz functions, the reader is suggested to consult Dassios and Rigou \cite{Dassios93,Dassios95}. 

\section{Location search using topological sensitivity indicators}\label{s:indicators}

In this investigation, a  $L^2$-discrepancy based cost functional and topological sensitivity based location indicators resulting therefrom are considered for identifying the   location $\bz_D$ of the inclusion $D$. Towards this end, a search point $\bz_S\in\RR^{d}$ is considered and  a trial inclusion $D_S:=\delta B_{S}+\bz_S$ is nucleated inside the background medium with scale factor $\delta\in\RR_+$, smooth reference domain $B_S$ and  corresponding parameters $(\lambda_2,\mu_2,\rho_2)$ which satisfy similar constraints as in Eqs. \eqref{constraint1} and \eqref{constraint2}. The interaction of the probing incident field $\bu^{\rm inc}$ with the medium containing trial inclusion $D_S$ generates the corresponding far-field patterns  $\bu^{\infty}_{\delta,P}$, $\bu^{\infty}_{\delta,S}$ and $\bu^{\infty}_{\delta}$, which satisfy similar equations and asymptotic expansions as $\bu^{\infty}_{\epsilon,P}$, $\bu^{\infty}_{\epsilon,S}$ and $\bu^{\infty}_{\epsilon}$ with $(\lambda_1,\mu_1,\rho_1, \bz_D, B_D, \epsilon)$ replaced by $(\lambda_2,\mu_2,\rho_2, \bz_S, B_S, \delta)$ under the assumption that $\omega\delta\ll 1$. 
Then, for any search point $\bz_S\in\RR^d$, the discrepancy functional 
\begin{align}
\mathcal{J}[\bu^{\rm inc}](\bz_S):=\frac{1}{2}\int_{\mathbb{S}^{d-1}} | \bu^{\infty}_\epsilon(\hbx) -\bu^{\infty}_\delta(\hbx) |^2 d\sigma(\hbx),
\label{J}
\end{align}
is considered in the hope that the point $\bz_S\in\RR^d$ for which far-field pattern $\bu^{\infty}_{\delta}$ minimizes $\mathcal{J}[\bu^{\rm inc}]$ is the sought point $\bz_D$. Instead of directly solving the optimization problem \eqref{J}, the topological derivative of $\mathcal{J}[\bu^{\rm inc}]$ is derived and the location indicators are constructed therefrom. Towards this end, the definition of the topological derivative of cost functional $\mathcal{J}[\bu^{\rm inc}]$ is reminded below.

\begin{defn} 
For any $\bz_S\in\RR^d$ and incident wave $\bu^{\rm inc}$, the topological derivative  of $\mathcal{J}[\bu^{\rm inc}]$ is defined as
\begin{align*}
\partial_T\mathcal{J}[\bu^{\rm inc}](\bz_S):= {\partial\mathcal{J}[\bu^{\rm inc}]}/{\partial\delta^d}\big|_{\delta=0}(\bz_S).
\end{align*}
\end{defn}

\subsection{Location indicators}
The first topological sensitivity based single-short location indicator  is defined as
\begin{align}
\mathcal{I}_{\rm TD}[\bu^{\rm inc}](\bz_S):=-\partial_T\mathcal{J}[\bu^{\rm inc}](\bz_S), \quad\forall\bz_S\in\RR^d.
\label{ITD-Def}
\end{align}
In the same spirit, the corresponding multi-short location indicator can be constructed from indicator \eqref{ITD-Def} for plane incident fields of the form \eqref{eq:plane}. Precisely, for $n$ uniformly distributed directions  $\btheta_1,\cdots,\btheta_n\in\mathbb{S}^{d-1}$ and the corresponding incident plane waves $\bu_j^P$  or $\bu_{j,\ell}^S$ defined as 
\begin{align}
\label{Uj}
\bu^{P}_j(\bx) := \btheta_j e^{i\K_P\hbx\cdot\btheta_j}\quad\text{and}\quad 
\bu^{S}_{j,\ell}(\bx) := \btheta_j^{\perp,\ell}  e^{i\K_S\hbx\cdot\btheta_j}, \quad 1\leq \ell\leq d-1,
\end{align}
the multi-short location indicator $\mathcal{I}_{\rm TD}^n:\RR^d\to \RR$ is defined as
\begin{align}
\label{ITDn-def}
\mathcal{I}_{\rm TD}^n(\bz_S; P) :=\frac{1}{n}\sum_{j=1}^n \mathcal{I}_{\rm TD}[\bu^{P}_j](\bz_S) 
\quad\text{or}\quad
\mathcal{I}_{\rm TD}^n(\bz_S; S) :=\frac{1}{n}\sum_{j=1}^n \sum_{\ell=1}^{d-1}\mathcal{I}_{\rm TD}[\bu^{S}_{j,\ell}](\bz_S).
\end{align}

By construction,  if the contrasts $(\lambda_2-\lambda_0)$ and $(\mu_2-\mu_0)$ have the same signs as $(\lambda_1-\lambda_0)$ and $(\mu_1-\mu_0)$, respectively, then the indicators $\mathcal{I}_{\rm TD}[\bu^{\rm inc}](\bz_S)$ and $\mathcal{I}_{\rm TD}^n(\bz_S;\alpha)$, defined in Eq. \eqref{ITD-Def} and \eqref{ITDn-def},  are expected to achieve the most prominent increase at  $\bz_S$ near $\bz_D$ and should decay very rapidly for $\bz_S$ away from $\bz_D$. It is simply due to the fact that $\mathcal{I}_{\rm TD}[\bu^{\rm inc}](\bz_S)$ is  negative of the derivative of  $\mathcal{J}[\bu^{\rm inc}]$ which should observe most significant decrease near $\bz_D$. In order to verify these capabilities of the indicators $\bz_S\mapsto\mathcal{I}_{\rm TD}[\bu^{\rm inc}](\bz_S)$ and $\bz_S\mapsto\mathcal{I}_{\rm TD}^n(\bz_S;\alpha)$, the sensitivity and resolution analysis will be performed in Section \ref{ss:sensitivity}. Towards this end, the expression for $\mathcal{I}_{\rm TD}[\bu^{\rm inc}]$ is derived in Theorem \ref{Thm:TD}. 

\begin{thm}\label{Thm:TD}
For  all $\bz_S\in\RR^d$,
\begin{align}
\mathcal{I}_{\rm TD}[\bu^{\rm inc}](\bz_S)
= &\sum_{\alpha=P,S}\Re\Bigg\{ a^\alpha_d\overline{\nabla\mathcal{H}\left[\bu^{\infty}_{\epsilon,\alpha}\right](\bz_S)}:\MM(B_S):\nabla\bu^{\rm inc}(\bz_S) 
 - a^\alpha_d\omega^2(\rho_0-\rho_2)|B_S|\overline{\mathcal{H}\left[\bu^{\infty}_{\epsilon,\alpha}\right](\bz_S)}\cdot\bu^{\rm inc}(\bz_S)\Bigg\}.
\label{ITD}
\end{align}
\end{thm}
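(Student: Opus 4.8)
The plan is to expand the cost functional $\mathcal{J}[\bu^{\rm inc}](\bz_S)$ to first order in $\delta^d$ and identify the coefficient of $\delta^d$. First I would write $\bu^{\infty}_\epsilon - \bu^{\infty}_\delta = (\bu^{\infty}_\epsilon) - (\bu^{\infty}_\delta)$ and expand the square in \eqref{J} as
\begin{align*}
\mathcal{J}[\bu^{\rm inc}](\bz_S) = \frac12\int_{\mathbb{S}^{d-1}} |\bu^{\infty}_\epsilon(\hbx)|^2 d\sigma(\hbx) - \Re\int_{\mathbb{S}^{d-1}} \bu^{\infty}_\epsilon(\hbx)\cdot\overline{\bu^{\infty}_\delta(\hbx)}\, d\sigma(\hbx) + \frac12\int_{\mathbb{S}^{d-1}} |\bu^{\infty}_\delta(\hbx)|^2 d\sigma(\hbx).
\end{align*}
The first term is independent of $\bz_S$ and $\delta$, hence is killed by $\partial/\partial\delta^d|_{\delta=0}$. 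The third term is $O(\delta^{2d})$ by the asymptotic expansion of $\bu^{\infty}_\delta$ in Corollary \ref{corAsymp} (with the trial parameters), so it too contributes nothing to $\partial_T\mathcal{J}$. Thus only the cross term survives, and $\partial_T\mathcal{J}[\bu^{\rm inc}](\bz_S) = -\Re\,\partial_{\delta^d}\big|_{\delta=0}\int_{\mathbb{S}^{d-1}}\bu^{\infty}_\epsilon(\hbx)\cdot\overline{\bu^{\infty}_\delta(\hbx)}\,d\sigma(\hbx)$.

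Next I would substitute the asymptotic expansion of $\bu^{\infty}_\delta = \bu^{\infty}_{\delta,P} + \bu^{\infty}_{\delta,S}$ from Corollary \ref{corAsymp}, applied to the trial inclusion $D_S = \delta B_S + \bz_S$ with parameters $(\lambda_2,\mu_2,\rho_2)$. Taking the conjugate and extracting the coefficient of $\delta^d$ gives, for each mode $\alpha = P,S$,
\begin{align*}
\overline{\bu^{\infty}_{\delta,\alpha}(\hbx)}\big/(-\delta^d) \longrightarrow \overline{a^\alpha_d} e^{i\K_\alpha\hbx\cdot\bz_S}\Big[-i\K_\alpha\, \overline{(\hbx\otimes\cdots\otimes\hbx):\MM(B_S):\nabla\bu^{\rm inc}(\bz_S)} + \omega^2(\rho_0-\rho_2)|B_S|\,\overline{(\cdots)\cdot\bu^{\rm inc}(\bz_S)}\Big],
\end{align*}
where the projector $\hbx\otimes\hbx$ (resp. $\I_d - \hbx\otimes\hbx$) and the appropriate tensor order appear according to whether $\alpha = P$ or $S$. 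Since $\MM(B_S)$ is real-symmetric and the contrast factors are real, the conjugation acts only on the complex exponential and on $\nabla\bu^{\rm inc}(\bz_S)$, $\bu^{\rm inc}(\bz_S)$. Then I would pull the $\hbx$-integral inside and recognize that
\begin{align*}
\int_{\mathbb{S}^{d-1}} \bu^{\infty}_{\epsilon,\alpha}(\hbx)\, e^{i\K_\alpha\hbx\cdot\bz_S}\, d\sigma(\hbx) = \mathcal{H}\big[\bu^{\infty}_{\epsilon,\alpha}\big](\bz_S),
\end{align*}
by the definition of the elastic Herglotz operator in Section \ref{sss:EHF} (noting $\bu^{\infty}_{\epsilon,P}\in L^2_P$ and $\bu^{\infty}_{\epsilon,S}\in L^2_S$, so each mode picks out exactly its own exponential kernel), and similarly the contraction against $i\K_\alpha(\hbx\otimes\hbx\otimes\hbx)$ or $i\K_\alpha\hbx\otimes(\I_d - \hbx\otimes\hbx)$ produces $\nabla\mathcal{H}[\bu^{\infty}_{\epsilon,\alpha}](\bz_S)$ after moving the gradient onto the exponential (each factor $i\K_\alpha\hbx\, e^{i\K_\alpha\hbx\cdot\bz_S} = \nabla_{\bz_S} e^{i\K_\alpha\hbx\cdot\bz_S}$). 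Combining, applying $\mathcal{I}_{\rm TD} = -\partial_T\mathcal{J}$, and using that $\Re(\overline{z}) = \Re(z)$ absorbs the sign from $-\Re$ together with the overall minus, yields \eqref{ITD} with $\Re\{a^\alpha_d \overline{\nabla\mathcal{H}[\bu^{\infty}_{\epsilon,\alpha}](\bz_S)}:\MM(B_S):\nabla\bu^{\rm inc}(\bz_S) - a^\alpha_d\omega^2(\rho_0-\rho_2)|B_S|\overline{\mathcal{H}[\bu^{\infty}_{\epsilon,\alpha}](\bz_S)}\cdot\bu^{\rm inc}(\bz_S)\}$.

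The main obstacle I anticipate is bookkeeping rather than conceptual: carefully tracking the complex conjugates, the placement of $a^\alpha_d$ versus $\overline{a^\alpha_d}$, and justifying that the gradient $\nabla$ in $\nabla\mathcal{H}[\bu^{\infty}_{\epsilon,\alpha}]$ may be interchanged with the $\hbx$-integral (uniform convergence, since the integrand and its $\bz_S$-derivatives are continuous on the compact sphere). One must also verify that the tensor contraction $(\hbx\otimes\hbx\otimes\hbx):\MM(B_S):\nabla\bu^{\rm inc}$, after integration against $e^{i\K_P\hbx\cdot\bz_S}$, genuinely reassembles as $\nabla\mathcal{H}[\bu^{\infty}_{\epsilon,P}](\bz_S):\MM(B_S):\nabla\bu^{\rm inc}(\bz_S)$ — i.e. that the Herglotz field built from $\bu^{\infty}_{\epsilon,P}$ is curl-free so that its gradient is symmetric and pairs correctly with the symmetric tensor $\MM(B_S)$; this is exactly the content of $\bu^{\infty}_{\epsilon,P}\in L^2_P(\mathbb{S}^{d-1})$. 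The higher-order remainders $o(\epsilon^d\omega^d)$ and $o(\delta^d)$ play no role since we differentiate once at $\delta = 0$ and leave the $\epsilon$-dependence inside the data $\bu^{\infty}_{\epsilon,\alpha}$.
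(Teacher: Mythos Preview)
Your proposal is correct and follows essentially the same route as the paper's proof: expand the quadratic in \eqref{J}, discard the $O(\delta^{2d})$ self-term, insert Corollary~\ref{corAsymp} for $\bu^{\infty}_\delta$, use the projection identities $(\hbx\otimes\hbx)\bu^{\infty}_\epsilon=\bu^{\infty}_{\epsilon,P}$ and $(\I_d-\hbx\otimes\hbx)\bu^{\infty}_\epsilon=\bu^{\infty}_{\epsilon,S}$ to pass from the full data to the mode components, convert the leading $\hbx$ into $\nabla_{\bz_S}$ acting on the exponential, and recognize the resulting $\hbx$-integrals as $\mathcal{H}[\bu^{\infty}_{\epsilon,\alpha}]$ and its gradient. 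The only cosmetic difference is that the paper conjugates $\bu^{\infty}_\epsilon$ rather than $\bu^{\infty}_\delta$, which is immaterial under $\Re$; your anticipated ``obstacle'' about reassembling $(\hbx\otimes\hbx\otimes\hbx):\MM(B_S):\nabla\bu^{\rm inc}$ is resolved precisely by those projection identities (the paper's \eqref{identities1}--\eqref{uPuS}), not by a curl-free argument on the Herglotz field.
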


\begin{proof}
Let us first expand $\mathcal{J}[\bu^{\rm inc}](\bz_S)$ as 
\begin{align*}
\mathcal{J}[\bu^{\rm inc}](\bz_S)= \frac{1}{2}\int_{\mathbb{S}^{d-1}}\left|\bu^{\infty}_\epsilon(\hbx)\right|^2d\sigma(\hbx) - \Re\left\{\int_{\mathbb{S}}\overline{\bu^{\infty}_\epsilon(\hbx)}\cdot\bu^{\infty}_\delta(\hbx)d\sigma(\bx)\right\}+ \frac{1}{2}\int_{\mathbb{S}^{d-1}}\left|\bu^{\infty}_\delta(\hbx)\right|^2d\sigma(\hbx).
\end{align*}
In view of Theorem \ref{thm:Asymp}, the last term on the RHS is of order $o(\delta^{2d})$. Consequently,
\begin{align*}
\mathcal{J}&[\bu^{\rm inc}](\bz_S)=\frac{1}{2}\int_{\mathbb{S}^{d-1}}\left|\bu^{\infty}_\epsilon(\hbx)\right|^2d\sigma(\hbx) 
- \Re\left\{\int_{\mathbb{S}^{d-1}}\overline{\bu^{\infty}_\epsilon(\hbx)}\cdot\bu^{\infty}_\delta(\hbx)d\sigma(\hbx)\right\}+o(\delta^{2d}).
\end{align*}
Note  that the total far-field  pattern $\bu^\infty_\delta$ can be expanded as well using  Corollary \ref{corAsymp}  so that 
\begin{align}
\mathcal{J}[\bu^{\rm inc}](\bz_S)
=&
\frac{1}{2}\int_{\mathbb{S}^{d-1}}\left|\bu^{\infty}_\epsilon(\hbx)\right|^2d\sigma(\hbx) 
+\Re\left\{i\K_Pa^P_d\delta^d\int_{\mathbb{S}^{d-1}} \overline{(\bu^{\infty}_\epsilon(\hbx)e^{i\K_P\hbx\cdot\bz_S})}\cdot\left[(\hbx\otimes\hbx\otimes\hbx):\MM(B_S):\nabla\bu^{\rm inc}(\bz_S)\right]d\sigma(\hbx)\right\}
\nonumber
\\
& +\Re\left\{a^P_d\omega^2(\rho_0-\rho_2)|B_S|\delta^d \int_{\mathbb{S}^{d-1}}\overline{(\bu^{\infty}_\epsilon(\hbx)e^{i\K_P\hbx\cdot\bz_S})}\cdot\left[(\hbx\otimes\hbx)\cdot\bu^{\rm inc}(\bz_S)\right]d\sigma(\hbx)\right\}
\nonumber
\\
& +\Re\left\{i\K_Sa^S_d\delta^d \int_{\mathbb{S}^{d-1}}\overline{(\bu^{\infty}_\epsilon(\hbx)e^{i\K_S\hbx\cdot\bz_S})}\cdot\left[(\hbx\otimes[\I_d-\hbx\otimes\hbx]):\MM(B_S):\nabla\bu^{\rm inc}(\bz_S)\right]d\sigma(\hbx)\right\}
\nonumber
\\
& +\Re\left\{a^S_d\omega^2(\rho_0-\rho_2)|B_S|\delta^d \int_{\mathbb{S}^{d-1}}\overline{(\bu^{\infty}_\epsilon(\hbx)e^{i\K_S\hbx\cdot\bz_S})}\cdot\left[\I_d - (\hbx\otimes\hbx)\cdot\bu^{\rm inc}(\bz_S)\right]d\sigma(\hbx)\right\}
+ o(\delta^{2d}).
\label{29b}
\end{align}
Recall that, for any vector $\bw$,
\begin{align}
\left(\hbx\otimes\hbx\right)\bw=\left(\hbx\cdot\bw\right)\hbx
\quad\text{and}\quad
\left(\I_d- \hbx\otimes\hbx\right)\bw=
\begin{cases}
\left(\hbx^\perp\cdot\bw \right)\hbx^\perp, & d=2,
\\\nm
\hbx\times\left(\bw \times\hbx\right), & d=3.
\end{cases}
\label{identities1}
\end{align}
Moreover, since  $\bu^{\infty}_{\epsilon,P}\in L^2_P(\mathbb{S}^{d-1})$ and $\bu^{\infty}_{\epsilon,S}\in L^2_S(\mathbb{S}^{d-1})$, it is easy to see that
\begin{align}
\bu^{\infty}_{\epsilon,P}(\hbx)=(\hbx\cdot\bu^{\infty}_\epsilon(\hbx))\hbx
\quad\text{and}\quad
\bu^{\infty}_{\epsilon,S}(\hbx)=
\begin{cases}
(\hbx^\perp\cdot\bu^{\infty}_\epsilon(\hbx))\hbx^\perp ,  &d=2,
\\\nm
\hbx\times (\bu^{\infty}_\epsilon(\hbx)\times\hbx),  &d=3.
\end{cases}
\label{uPuS}
\end{align}
Consequently, the expression \eqref{29b}, together with the identities \eqref{identities1} and \eqref{uPuS}, renders
\begin{align*}
\mathcal{J}[\bu^{\rm inc}](\bz_S)=&\frac{1}{2}\int_{\mathbb{S}^{d-1}}\left|\bu^{\infty}_\epsilon(\hbx)\right|^2d\sigma(\hbx) 
+\sum_{\alpha=P,S}\Bigg[
\Re\left\{i\K_\alpha a^\alpha_d\delta^d\int_{\mathbb{S}^{d-1}} \overline{\hbx\otimes \bu^{\infty}_{\epsilon,\alpha}(\hbx) e^{i\K_\alpha\hbx\cdot\bz_S}} d\sigma(\hbx):\MM(B_S):\nabla\bu^{\rm inc}(\bz_S)\right\}
\\
& +\Re\left\{a^\alpha_d\omega^2(\rho_0-\rho_2)|B_S|\delta^d \int_{\mathbb{S}^{d-1}} \overline{\bu^{\infty}_{\epsilon,\alpha}(\hbx) e^{i\K_\alpha\hbx\cdot\bz_S}} d\sigma(\hbx)\cdot\bu^{\rm inc}(\bz_S)\right\}
\Bigg]
+o(\delta^{2d})
\\
= &\sum_{\alpha=P,S}\Bigg[
-\Re\left\{ a^\alpha_d\delta^d\overline{\nabla\int_{\mathbb{S}^{d-1}} \bu^{\infty}_{\epsilon,\alpha}(\hbx) e^{i\K_\alpha\hbx\cdot\bz_S} d\sigma(\hbx)}:\MM(B_S):\nabla\bu^{\rm inc}(\bz_S)\right\}
\\
& +\Re\left\{a^\alpha_d\omega^2(\rho_0-\rho_2)|B_S|\delta^d \int_{\mathbb{S}^{d-1}} \overline{\bu^{\infty}_{\epsilon,\alpha}(\hbx) e^{i\K_\alpha\hbx\cdot\bz_S}} d\sigma(\hbx)\cdot\bu^{\rm inc}(\bz_S)\right\}
\Bigg]
+ o(\delta^{2d}).
\end{align*}
Finally, using the definition of the Herglotz wave field with kernel $\bu^{\infty}_{\epsilon,\alpha}$, one arrives at
\begin{align*}
\frac{1}{\delta^d}&\left(\mathcal{J}[\bu^{\rm inc}](\bz_S)-\frac{1}{2}\int_{\mathbb{S}^{d-1}}\left|\bu^{\infty}_\epsilon(\hbx)\right|^2d\sigma(\hbx) 
\right) 
\\
&=\sum_{\alpha=P,S}\Bigg[
-\Re\left\{ a^\alpha_d\overline{\nabla \mathcal{H}\left[\bu^{\infty}_{\epsilon,\alpha}\right](\bz_S)}:\MM(B_S):\nabla\bu^{\rm inc}(\bz_S)\right\}
+\Re\left\{a^\alpha_d\omega^2(\rho_0-\rho_2)|B_S|\overline{\mathcal{H}\left[\bu^{\infty}_{\epsilon,\alpha}\right](\bz_S)}\cdot\bu^{\rm inc}(\bz_S)\right\}
\Bigg]
+ o(\delta^{d}).
\end{align*}
By passing the limit $\delta^d\to 0$, the topological derivative of the cost functional $\mathcal{J}[\bu^{\rm inc}](\bz_S)$  is obtained and the required expression \eqref{ITD} for $\mathcal{I}_{\rm TD}[\bu^{\rm inc}]$ follows immediately.

\end{proof}

Before proceeding to the analysis of the topological sensitivity of $\mathcal{I}_{\rm TD}[\bu^{\rm inc}]$,  note that the asymptotic expansions of the Herglotz wave fields $\mathcal{H}[\bu^{\infty}_{\epsilon,\alpha}]$ as $\epsilon^d\to 0$ are required. Towards this end, the following result holds.  

\begin{lem}\label{LemHexp}
Let $\rho_1\omega^2$ be different from the eigenvalues of  $-\OL_{\lambda_1,\mu_1}$ in $L^2(D)^d$ and $\omega\epsilon\ll 1$. Then, for all $\bz\in\RR^d$,
\begin{align*}
\mathcal{H}\left[\bu^{\infty}_{\epsilon,\alpha} \right](\bz) 
=&
-4\rho_0c_\alpha^2 a^\alpha_d\epsilon^d\left(\frac{\pi}{\K_\alpha}\right)^{d-2}\left(\nabla_{\bz} \Im\big\{\bGam^\omega_\alpha(\bz,\bz_D)\big\}:\MM(B_D):\nabla\bu^{\rm inc}(\bz_D) 
+\omega^2(\rho_0-\rho_1)|B_D|\Im\big\{\bGam^\omega_\alpha(\bz,\bz_D)\big\}\cdot\bu^{\rm inc}(\bz_D)\right)
\nonumber
\\
&+o(\epsilon^d\omega^d).
\end{align*}
\end{lem}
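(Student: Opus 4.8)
The plan is to substitute the far-field asymptotic expansion of Corollary \ref{corAsymp} directly into the definition of the Herglotz wave field $\mathcal{H}[\bu^{\infty}_{\epsilon,\alpha}]$ and then recognize the resulting angular integrals over $\mathbb{S}^{d-1}$ as the spherical-mean identities collected in Proposition \ref{Prop:identities}. First I would write, for $\alpha=P$,
\begin{align*}
\mathcal{H}[\bu^{\infty}_{\epsilon,P}](\bz)=\int_{\mathbb{S}^{d-1}}e^{i\K_P\bz\cdot\hbx}\,\bu^{\infty}_{\epsilon,P}(\hbx)\,d\sigma(\hbx),
\end{align*}
and insert the expression for $\bu^{\infty}_{\epsilon,P}(\hbx)$ from Corollary \ref{corAsymp}. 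This produces two terms: one carrying $e^{i\K_P(\bz-\bz_D)\cdot\hbx}\,i\K_P\,\hbx\otimes\hbx\otimes\hbx$ contracted against $\MM(B_D):\nabla\bu^{\rm inc}(\bz_D)$, and one carrying $e^{i\K_P(\bz-\bz_D)\cdot\hbx}\,\hbx\otimes\hbx$ contracted against $\bu^{\rm inc}(\bz_D)$ with the factor $\omega^2(\rho_0-\rho_1)|B_D|$. The $o(\epsilon^d\omega^d)$ remainder in Corollary \ref{corAsymp} integrates to a remainder of the same order since $\mathbb{S}^{d-1}$ has finite measure.

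Next I would evaluate the two angular integrals. The continuous analogues of the discrete sums \eqref{identity4} and \eqref{identity10} (equivalently, the exact integral identities underlying them, obtained by letting $n\to\infty$ so that the empirical average becomes $\int_{\mathbb{S}^{d-1}}(\cdot)\,d\sigma/|\mathbb{S}^{d-1}|$, but here one simply uses the exact identities) give
\begin{align*}
\int_{\mathbb{S}^{d-1}}e^{i\K_P(\bz-\bz_D)\cdot\hbx}\,\hbx\otimes\hbx\,d\sigma(\hbx)
=4\rho_0c_P^2\Big(\frac{\pi}{\K_P}\Big)^{d-2}\Im\big\{\bGam^\omega_P(\bz,\bz_D)\big\},
\end{align*}
and
\begin{align*}
\int_{\mathbb{S}^{d-1}}i\K_P\,e^{i\K_P(\bz-\bz_D)\cdot\hbx}\,\hbx\otimes\hbx\otimes\hbx\,d\sigma(\hbx)
=4\rho_0c_P^2\Big(\frac{\pi}{\K_P}\Big)^{d-2}\Im\big\{\nabla_{\bz}\bGam^\omega_P(\bz,\bz_D)\big\}.
\end{align*}
(I would note that the gradient identity is consistent with the first one upon differentiating in $\bz$, since $\nabla_{\bz}e^{i\K_P(\bz-\bz_D)\cdot\hbx}=i\K_P\hbx\,e^{i\K_P(\bz-\bz_D)\cdot\hbx}$, which is the cleanest way to justify \eqref{identity10} and hence this step.) Substituting these back, factoring out the common scalar $-4\rho_0c_P^2a^P_d\epsilon^d(\pi/\K_P)^{d-2}$, and collecting the two terms inside a single parenthesis yields exactly the claimed expansion for $\alpha=P$. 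The case $\alpha=S$ is identical in structure: one inserts the $\bu^{\infty}_{\epsilon,S}$ expansion from Corollary \ref{corAsymp}, uses \eqref{identity6} for the term $\int_{\mathbb{S}^{d-1}}e^{i\K_S(\bz-\bz_D)\cdot\hbx}(\I_d-\hbx\otimes\hbx)\,d\sigma(\hbx)$ and the corresponding first-derivative identity (the continuous form behind \eqref{identity8}) for the term with $\hbx\otimes(\I_d-\hbx\otimes\hbx)$, and again uses $\nabla_{\bz}$ of the exponential to pass from the zeroth- to the first-order angular moment, recognizing $\bGam^\omega_S$ from \eqref{Green_fun}.

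The only genuine subtlety—and the step I expect to be the main obstacle—is matching conventions between the discrete spherical averages in Proposition \ref{Prop:identities}, which are stated with the prefactor $\tfrac1n\sum_j$, and the continuous integrals $\int_{\mathbb{S}^{d-1}}(\cdot)\,d\sigma$ appearing in $\mathcal{H}$; one must be careful that \eqref{identity3} is the exact integral identity (no $\tfrac1n$) while \eqref{identity4}, \eqref{identity6}, \eqref{identity10}, etc.\ are its tensorial consequences, so that the constants $4(\pi/\K_\alpha)^{d-2}$, the wave speeds $c_\alpha^2=\mu_0/\rho_0$ or $(\lambda_0+2\mu_0)/\rho_0$, and the normalization of $\bGam^\omega_\alpha$ via \eqref{Green_fun} all combine correctly. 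A second, more routine, bookkeeping point is that the contraction $(\hbx\otimes\hbx\otimes\hbx):\MM(B_D):\nabla\bu^{\rm inc}(\bz_D)$ must be read with the index conventions of Section \ref{ss:prelim} so that replacing $\int \hbx\otimes\hbx\otimes\hbx\,(\cdots)$ by $\Im\{\nabla_{\bz}\bGam^\omega_P\}$ lands on $\nabla_{\bz}\Im\{\bGam^\omega_\alpha(\bz,\bz_D)\}:\MM(B_D):\nabla\bu^{\rm inc}(\bz_D)$ rather than on some transpose of it; this is handled by the symmetry $m_{pq}^{lm}=m^{pq}_{lm}=m_{pq}^{ml}=m_{qp}^{lm}$ of the EMT. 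Once these conventions are fixed, the proof is a direct substitution and no further estimates are needed beyond the $o(\epsilon^d\omega^d)$ control already furnished by Theorem \ref{thm:Asymp} and Corollary \ref{corAsymp}.
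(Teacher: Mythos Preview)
Your proposal is correct and follows essentially the same route as the paper: substitute Corollary~\ref{corAsymp} into the Herglotz operator and reduce the resulting angular integrals via the spherical-mean identities of Proposition~\ref{Prop:identities}. The only cosmetic difference is that the paper invokes solely the scalar identity \eqref{identity3} and then rewrites the tensor factors $\hbx\otimes\hbx$, $\hbx\otimes\hbx\otimes\hbx$, and $\I_d-\hbx\otimes\hbx$ as differential operators in $\bz$ acting on the exponential (thereby recognizing $\bGam^\omega_\alpha$ from its definition \eqref{Green_fun}), whereas you cite the tensorial identities \eqref{identity4}, \eqref{identity6}, \eqref{identity10} directly; since those tensorial identities are themselves derived from \eqref{identity3} by exactly that differentiation (Appendix~\ref{A:Identities}), the two arguments are the same computation packaged differently, and your concern about the $\tfrac{1}{n}\sum$ versus $\int_{\mathbb{S}^{d-1}}$ normalization disappears if you follow the paper and work only with \eqref{identity3}.
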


\begin{proof}

Let us first consider the case $\alpha=P$.  By definition of the Herglotz wave field and Corollary \ref{corAsymp},  for all $\bz_S\in\RR^d$,
\begin{align*}
\mathcal{H}\left[\bu^{\infty}_{\epsilon,P} \right](\bz) 
=&
-i\K_P a^P_d\epsilon^d \int_{\mathbb{S}^{d-1}} e^{i\K_P\hbx\cdot(\bz-\bz_D)}(\hbx\otimes\hbx\otimes\hbx):\MM(B_D):\nabla\bu^{\rm inc}(\bz_D) d\sigma(\hbx)
\\
&-a^P_d\omega^2(\rho_0-\rho_1)|B_D|\epsilon^d\int_{\mathbb{S}^{d-1}} e^{i\K_P\hbx\cdot(\bz-\bz_D)}(\hbx\otimes\hbx)\cdot\bu^{\rm inc}(\bz_D) d\sigma(\hbx)+o(\epsilon^d\omega^d),
\end{align*}
or  equivalently, 
\begin{align*}
\mathcal{H}\left[\bu^{\infty}_{\epsilon,P} \right](\bz) 
=&\frac{a^P_d\epsilon^d}{\K_P^2}\nabla_{\bz}\otimes\nabla_{\bz}\otimes \nabla_{\bz} \int_{\mathbb{S}^{d-1}} \left( e^{i\K_P\hbx\cdot(\bz-\bz_D)} \right)d\sigma(\hbx) :\MM(B_D):\nabla\bu^{\rm inc}(\bz_D) 
\\
&+\frac{a^P_d\omega^2(\rho_0-\rho_1)|B_D|\epsilon^d}{\K_P^2}\nabla_{\bz}\otimes \nabla_{\bz}\int_{\mathbb{S}^{d-1}} \left( e^{i\K_P\hbx\cdot(\bz-\bz_D)} \right)d\sigma(\hbx)\cdot\bu^{\rm inc}(\bz_D)
+o(\epsilon^d\omega^d).
\end{align*}
This, together with the identity \eqref{identity3}, yields  
\begin{align*}
\mathcal{H}\left[\bu^{\infty}_{\epsilon,P} \right](\bz) 
=&
4\left(\frac{\pi}{\K_P}\right)^{d-2}\frac{a^P_d\epsilon^d}{\K_P^2}\nabla_{\bz}\otimes \left(\nabla_{\bz}\otimes \nabla_{\bz}\Im\big\{g^\omega_P(\bz-\bz_D)\big\}\right):\MM(B_D):\nabla\bu^{\rm inc}(\bz_D) 
\\
&+4\left(\frac{\pi}{\K_P}\right)^{d-2}\frac{a^P_d\omega^2(\rho_0-\rho_1)|B_D|\epsilon^d}{\K_P^2}\left(\nabla_{\bz}\otimes \nabla_{\bz}\Im\big\{g^\omega_P(\bz-\bz_D)\big\}\right)\cdot\bu^{\rm inc}(\bz_D)
+o(\epsilon^d\omega^d).
\end{align*}
Finally, by the definition of $\bGam^\omega_P$, it can be easily seen that
\begin{align*}
\mathcal{H}\left[\bu^{\infty}_{\epsilon,P} \right](\bz) 
=&
-4\left(\frac{\pi}{\K_P}\right)^{d-2}\rho_0c_P^2 a^P_d\epsilon^d\nabla_{\bz} \Im\big\{\bGam^\omega_P(\bz,\bz_D)\big\}:\MM(B_D):\nabla\bu^{\rm inc}(\bz_D) 
\nonumber
\\
&-4\left(\frac{\pi}{\K_P}\right)^{d-2}\rho_0 c_P^2 a^P_d\omega^2(\rho_0-\rho_1)|B_D|\epsilon^d\Im\big\{\bGam^\omega_P(\bz,\bz_D)\big\}\cdot\bu^{\rm inc}(\bz_D)
+o(\epsilon^d\omega^d). 
\end{align*}
For $\alpha=S$, Corollary \ref{corAsymp} is once again invoked so that, for all $\bz_S\in\RR^d$,
\begin{align*}
\mathcal{H}\left[\bu^{\infty}_{\epsilon,S} \right](\bz) 
=&
-i\K_S a^S_d\epsilon^d \int_{\mathbb{S}^{d-1}} e^{i\K_S\hbx\cdot(\bz-\bz_D)}\hbx\otimes(\I_d-\hbx\otimes\hbx):\MM(B_D):\nabla\bu^{\rm inc}(\bz_D) d\sigma(\hbx)
\\
&-a^S_d\omega^2(\rho_0-\rho_1)|B_D|\epsilon^d\int_{\mathbb{S}^{d-1}} e^{i\K_S\hbx\cdot(\bz-\bz_D)}(\I_d-\hbx\otimes\hbx)\cdot\bu^{\rm inc}(\bz_D) d\sigma(\hbx)+o(\epsilon^d\omega^d).
\end{align*}
Again using identity \eqref{identity3}, one gets
\begin{align*}
\mathcal{H}\left[\bu^{\infty}_{\epsilon,S} \right](\bz) 
=&
-{a^S_d\epsilon^d}\nabla_{\bz}\left(\I_d+\frac{1}{\K_S^2}\nabla_{\bz}\otimes \nabla_{\bz}\right)\int_{\mathbb{S}^{d-1}} \left( e^{i\K_S\hbx\cdot(\bz-\bz_D)} \right)d\sigma(\hbx) :\MM(B_D):\nabla\bu^{\rm inc}(\bz_D) 
\\
&-{a^S_d\omega^2(\rho_0-\rho_1)|B_D|\epsilon^d}\left(\I_d+\frac{1}{\K_S^2}\nabla_{\bz}\otimes \nabla_{\bz}\right)\int_{\mathbb{S}^{d-1}} \left( e^{i\K_S\hbx\cdot(\bz-\bz_D)} \right)d\sigma(\hbx)\cdot\bu^{\rm inc}(\bz_D)
+o(\epsilon^d\omega^d) 
\\
=&
-4\left(\frac{\pi}{\K_S}\right)^{d-2}{a^S_d\epsilon^d}\nabla_{\bz}\left(\I_d+\frac{1}{\K_S^2}\nabla_{\bz}\otimes \nabla_{\bz}\right)\left[\Im\big\{g_S^\omega(\bz-\bz_D)\big\}\right] :\MM(B_D):\nabla\bu^{\rm inc}(\bz_D) 
\\
&-4\left(\frac{\pi}{\K_S}\right)^{d-2}{a^S_d\omega^2(\rho_0-\rho_1)|B_D|\epsilon^d}\left(\I_d+\frac{1}{\K_S^2}\nabla_{\bz}\otimes \nabla_{\bz}\right)\left[\Im\big\{g_S^\omega(\bz-\bz_D)\big\}\right]\cdot\bu^{\rm inc}(\bz_D)
+o(\epsilon^d\omega^d).
\end{align*}
The desired result  easily follows from the expression of  $\bGam^\omega_S$. This completes the proof.
\end{proof}

\subsection{Sensitivity and resolution analysis}\label{ss:sensitivity}

Let us now discuss the sensitivity of the location indicators. Remark that   $\mathcal{I}_{\rm TD}[\bu^{\rm inc}]$ depends on the back-propagator  $\mathcal{H}[\bu^\infty_{\epsilon,\alpha}]$ which consists of two terms, a \emph{density contrast} term (involving  $(\rho_0-\rho_1)$) and an \emph{elastic contrast} term (involving $\MM(B_D)$), as suggested by Lemma \ref{LemHexp}. Accordingly, two separate cases are considered for brevity. Specifically, the inclusion is assumed to have either a density contrast (i.e., $\lambda_0=\lambda_1$, $\mu_0=\mu_1$ and $\rho_0\neq\rho_1$) or an elasticity contrast (i.e., $\lambda_0\neq\lambda_1$, $\mu_0\neq\mu_1$ and $\rho_0=\rho_1$) with the background medium throughout in this article. The general case is delicate but amenable to the same treatment.

\subsubsection{Density contrast}

If it is a priori known that the true inclusion has only a density contrast with the background medium then  the trial inclusion is also nucleated with a density contrast only. 
Moreover, it is assumed that the density contrasts of the trial and the true inclusions  with the background have the same signs, i.e., the parameter $\rho_2$ is such that $(\rho_0-\rho_1)(\rho_0-\rho_2) > 0$. For ease of notation, a parameter $\tau$ is introduced by
\begin{align*}
\tau:=|B_S||B_D|(\rho_0-\rho_1)(\rho_0-\rho_2).
\end{align*}

The location indicator $\mathcal{I}_{\rm TD}[\bu^{\rm inc}]$ and back-propagator $\mathcal{H}[\bu^{\infty}_{\epsilon,\alpha}]$ in the density contrast case appear to be 
\begin{align}
\label{ITD-DC}
&\mathcal{I}_{\rm TD}[\bu^{\rm inc}](\bz_S)= -\sum_{\alpha=P,S} \Re\left\{a^\alpha_d\omega^2(\rho_0-\rho_2)|B_S|\overline{\mathcal{H}[\bu^{\infty}_{\epsilon,\alpha}](\bz_S)}\cdot\bu^{\rm inc}(\bz_S)\right\},
\\
\label{Herglotz-DC}
&\mathcal{H}[\bu^{\infty}_{\epsilon,\alpha}](\bz_S) \approx -4\left(\frac{\pi}{\K_\alpha}\right)^{d-2}\rho_0 c_\alpha^2 a^\alpha_d \omega^2(\rho_0-\rho_1)|B_D|\epsilon^d \Im\left\{\bGam^\omega_\alpha(\bz_S,\bz_D)\right\}\cdot \bu^{\rm inc}(\bz_D),\qquad\forall\bz_S\in\RR^d.
\end{align}
It is worthwhile remarking that the back-propagators $\mathcal{H}[\bu^{\infty}_{\epsilon, P}](\bz_S)$ and $\mathcal{H}[\bu^{\infty}_{\epsilon, S}](\bz_S)$ are respectively solenoidal and irrotational in the regime $\epsilon^d\to 0$. In fact, it is easy to check, e.g., for $d=3$,  that
\begin{align*}
&\nabla\times\mathcal{H}[\bu^{\infty}_{\epsilon, P}](\bz_S)
\propto 
\nabla\times\left(\Im\left\{\bGam^\omega_P(\bz_S,\bz_D)\right\}\cdot\bu^{\rm inc}(\bz_D)\right) 
=
\left(\nabla\times\Im\left\{\bGam^\omega_P(\bz_S,\bz_D)\right\}\right)\cdot\bu^{\rm inc}(\bz_D)
= \mathbf{0},
\\
&\nabla\cdot\mathcal{H}[\bu^{\infty}_{\epsilon, S}](\bz_S)
\propto 
\nabla\cdot\left(\Im\left\{\bGam^\omega_S(\bz_S,\bz_D)\right\}\cdot\bu^{\rm inc}(\bz_D)\right) 
=
\left(\nabla\cdot\Im\left\{\bGam^\omega_S(\bz_S,\bz_D)\right\}\right)\cdot\bu^{\rm inc}(\bz_D)
=0.
\end{align*}
This observation, together with \eqref{Herglotz-DC} in \eqref{ITD-DC}, leads us to the simplified form
\begin{align}
\mathcal{I}_{\rm TD}[\bu^{\rm inc}](\bz_S)\approx \omega^3\gamma_d\tau\epsilon^d \Re\left\{\sum_{\alpha=P,S}\frac{1}{c_\alpha}\Im\left\{\bGam^\omega_\alpha(\bz_S,\bz_D)\right\} \overline{\bu^{\rm inc}(\bz_D)}\cdot\Pk_\alpha[\bu^{\rm inc}](\bz_S)
\right\}, 
\label{ITD-density}
\end{align}
where $\gamma_d :={1}/({2^{d-1}\pi \rho_0})$. In particular, when $\bu^{\rm inc}$ is a  $\beta-$plane wave, with $\beta=P$ or $S$,
\begin{align}
\mathcal{I}_{\rm TD}[\bu^\beta](\bz_S) 
\approx 
& \omega^3\gamma_d\tau\epsilon^d \Re\left\{\sum_{\alpha=P,S}\frac{1}{c_\alpha}\Im\left\{\bGam^\omega_\alpha(\bz_S,\bz_D)\right\} \overline{\bu^\beta(\bz_D)}\cdot\Pk_\alpha[\bu^{\beta}](\bz_S)
\right\}
\nonumber
\\
=&  \frac{\omega^3\gamma_d\tau\epsilon^d }{c_\beta}\Re\left\{\Im\left\{\bGam^\omega_\beta(\bz_S,\bz_D)\right\} \overline{\bu^\beta(\bz_D)}\cdot\bu^\beta(\bz_S)
\right\}. 
\label{ITD-densityB}
\end{align}

The expression \eqref{ITD-densityB} clearly shows that  if the medium is probed by an incident $\beta-$wave then the single-short location indicator is proportional to the imaginary part of $\beta-$component of the Kupradze matrix, which observes a sharp peak at the point $\bz_S$ near the true location $\bz_D$ of the inclusion $D$ with a focal spot size of the order of half the wavelength of $\beta-$wave. For instance, when $\beta=S$ and $d=3$, 
\begin{align*}
\mathcal{I}_{\rm TD}[\bu^S](\bz_S) 
\propto
&\cos (\K_S\btheta\cdot(\bz_S-\bz_D)) \Im\left\{\bGam^\omega_S(\bz_S,\bz_D)\right\} \btheta^\perp\cdot\btheta^\perp 
\nonumber
\\
\propto 
&\cos (\K_S\btheta\cdot \mathbf{r})\left[\frac{2}{3}j_0\left(\K_S|\mathbf{r}|\right)\I_3+j_2\left(\K_S|\mathbf{r}|\right)\left(\hat{\mathbf{r}}\otimes \hat{\mathbf{r}}-\frac{1}{3}\I_3\right)\right] \btheta^\perp\cdot\btheta^\perp,
\end{align*}
where $\mathbf{r}:=\bz_S-\bz_D$ and $j_m$ is the spherical Bessel function of first kind and order $m$. Recall that $j_m(\K_S|\mathbf{r}|)= O(1/\K_S|\mathbf{r}|)$ as  $\K_S|\mathbf{r}| \to +\infty$ 
(see, e.g., \cite[Sec. 10.52]{NIST}).  Moreover, its focal spot size is of the order of the wavelength of S-wave. Since $\cos(\K_S\btheta\cdot\mathbf{r})$ is a bounded function, it is evident that $\bz_S\mapsto\mathcal{I}_{\rm TD}[\bu^S]$ observes a very prominent peak that has diameter determined by the wavelength of S-wave as $\bz_S\to\bz_D$. In general, it tells us that the location indicator $\mathcal{I}_{\rm TD}[\bu^{\rm inc}]$ in density contrast case is felicitous to identify $\bz_D$ with Rayleigh resolution.

Let us now discuss the sensitivity of the multi-short location indicator. The following result holds. 
\begin{lem}\label{LemTD-density}
If  $n\in\NN$ is sufficiently large then for any search point $\bz_S\in\RR^d$, 
\begin{align*}
\mathcal{I}_{\rm TD}^n(\bz_S; \alpha) \approx 4\rho_0\omega^3\tau\gamma_dc_\alpha \epsilon^d\left( \frac{\pi}{\K_\alpha}\right)^{d-2}\Phi_{\alpha,\alpha}(\bz_S,\bz_D),
\end{align*}
where $\Phi_{\alpha,\beta}:\RR^d\times\RR^d\to\RR$, for $\alpha,\beta=P,S$, is defined as
\begin{align}
\label{T}
\Phi_{\alpha,\beta}(\bz,\bz'):=\Im\left\{\bGam^\omega_\alpha(\bz,\bz')\right\}:\Im\left\{\bGam^\omega_\beta(\bz,\bz')\right\}.
\end{align}
\end{lem}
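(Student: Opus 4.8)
The plan is to start from the single‑shot expression \eqref{ITD-densityB} for a $\beta$‑plane wave and average over the $n$ uniformly distributed incident directions. Since the incident field is $\bu^\beta_j(\bx)=\btheta_j e^{i\K_P\bx\cdot\btheta_j}$ when $\beta=P$ (respectively $\bu^S_{j,\ell}(\bx)=\btheta_j^{\perp,\ell}e^{i\K_S\bx\cdot\btheta_j}$ when $\beta=S$), the product $\overline{\bu^\beta_j(\bz_D)}\cdot\bu^\beta_j(\bz_S)$ carries a phase factor $e^{i\K_\beta\btheta_j\cdot(\bz_S-\bz_D)}$, and the remaining tensorial part is $\btheta_j\otimes\btheta_j$ for $\beta=P$ or $\sum_{\ell}\btheta_j^{\perp,\ell}\otimes\btheta_j^{\perp,\ell}$ for $\beta=S$. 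First I would write
\begin{align*}
\mathcal{I}_{\rm TD}^n(\bz_S;\alpha)=\frac{\omega^3\gamma_d\tau\epsilon^d}{c_\alpha}\Re\left\{\Im\left\{\bGam^\omega_\alpha(\bz_S,\bz_D)\right\}:\frac{1}{n}\sum_{j=1}^n\left(\text{tensor}_j\right)e^{i\K_\alpha\btheta_j\cdot(\bz_S-\bz_D)}\right\},
\end{align*}
so that the direction average is exactly the quantity appearing on the left‑hand side of identity \eqref{identity4} (for $\alpha=P$) or \eqref{identity6} (for $\alpha=S$) in Proposition \ref{Prop:identities}, with $\by=\bz_S$ and $\bz=\bz_D$.

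Next I would substitute those identities. For $\alpha=P$ the average becomes $\approx 4\rho_0 c_P^2(\pi/\K_P)^{d-2}\Im\{\bGam^\omega_P(\bz_S,\bz_D)\}$, and similarly for $\alpha=S$ with $c_S$, $\K_S$ and $\bGam^\omega_S$. Plugging this in and collecting the constants gives
\begin{align*}
\mathcal{I}_{\rm TD}^n(\bz_S;\alpha)\approx\frac{\omega^3\gamma_d\tau\epsilon^d}{c_\alpha}\cdot 4\rho_0 c_\alpha^2\left(\frac{\pi}{\K_\alpha}\right)^{d-2}\Re\left\{\Im\left\{\bGam^\omega_\alpha(\bz_S,\bz_D)\right\}:\Im\left\{\bGam^\omega_\alpha(\bz_S,\bz_D)\right\}\right\}.
\end{align*}
Since $\Im\{\bGam^\omega_\alpha\}$ is a real symmetric matrix, the double contraction $\Im\{\bGam^\omega_\alpha\}:\Im\{\bGam^\omega_\alpha\}$ is real and nonnegative, so the outer $\Re\{\cdot\}$ is redundant and the contraction is exactly $\Phi_{\alpha,\alpha}(\bz_S,\bz_D)$ by definition \eqref{T}. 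Simplifying $4\rho_0 c_\alpha^2/c_\alpha=4\rho_0 c_\alpha$ yields the claimed formula $\mathcal{I}_{\rm TD}^n(\bz_S;\alpha)\approx 4\rho_0\omega^3\tau\gamma_d c_\alpha\epsilon^d(\pi/\K_\alpha)^{d-2}\Phi_{\alpha,\alpha}(\bz_S,\bz_D)$.

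The only genuinely delicate point is the bookkeeping in the first step: one must verify that, after the Helmholtz‑decomposition simplification leading to \eqref{ITD-density}–\eqref{ITD-densityB}, the $\beta$‑plane wave restricted to the $\alpha=\beta$ term really does reduce the sum over $\alpha$ to a single term, and that the tensor $\btheta_j\otimes\btheta_j$ (or $\sum_\ell\btheta_j^{\perp,\ell}\otimes\btheta_j^{\perp,\ell}$) that multiplies the exponential is contracted against $\Im\{\bGam^\omega_\alpha(\bz_S,\bz_D)\}$ in precisely the pairing used in Proposition \ref{Prop:identities}. I expect this indexing—keeping straight which slot of each rank‑two tensor is summed and confirming the ``$:$'' contraction matches—to be the main obstacle; the rest is substitution of a stated identity and collection of scalar constants. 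A minor additional point is that the $\approx$ in \eqref{identity4}–\eqref{identity6} introduces an error that is controlled for $n$ large (as in the hypothesis of Proposition \ref{Prop:identities}), and one should note this is absorbed into the $\approx$ of the conclusion together with the $o(\epsilon^d\omega^d)$ already present in \eqref{Herglotz-DC}.
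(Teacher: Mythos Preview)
Your proposal is correct and follows essentially the same route as the paper: start from the single-shot density-contrast expression \eqref{ITD-densityB}, use $\mathbf{A}\btheta\cdot\btheta=\mathbf{A}:\btheta\otimes\btheta$ to rewrite the direction average as a contraction of $\Im\{\bGam^\omega_\alpha(\bz_S,\bz_D)\}$ against the sums in \eqref{identity4}--\eqref{identity6}, and then collect constants. Your remark that $\Im\{\bGam^\omega_\alpha\}$ is real (so the outer $\Re\{\cdot\}$ is redundant) is a nice clarification that the paper leaves implicit.
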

\begin{proof}
For  $\alpha=P$, by virtue of expression \eqref{ITD-density}, 
\begin{align*}
\mathcal{I}_{\rm TD}^n(\bz_S; P) \approx & \frac{1}{n}\sum_{j=1}^n \omega^3\gamma_d\tau\epsilon^d \Re\left\{\frac{1}{c_P}\Im\left\{\bGam^\omega_P(\bz_S,\bz_D)\right\} \overline{\bu^P_j(\bz_D)}\cdot\bu^P_j(\bz_S)
\right\}.
\end{align*}
Substituting the expression  \eqref{Uj} for $\bu^P_j$ and using $\mathbf{A}\btheta:\btheta=\mathbf{A}:\btheta\otimes\btheta$ for arbitrary  $\mathbf{A}\in\RR^{d\times d}$ and $\btheta\in\RR^d$, one gets
\begin{align*}
\mathcal{I}_{\rm TD}^n(\bz_S; P) \approx &  \omega^3\gamma_d\tau\epsilon^d \Re\left\{\frac{1}{c_P}\Im\left\{\bGam^\omega_P(\bz_S,\bz_D)\right\}: \frac{1}{n}\sum_{j=1}^n\btheta_j\otimes\btheta_j e^{i\K_P(\bz_S-\bz_D)\cdot\btheta_j}
\right\}.
\end{align*}
Therefore, the desired result is obtained by invoking approximation \eqref{identity4}.

Similarly, for $\alpha=S$, 
\begin{align*}
\mathcal{I}_{\rm TD}^n(\bz_S; S) \approx & \frac{1}{n}\sum_{j=1}^n \sum_{\ell=1}^{d-1}\omega^3\gamma_d\tau\epsilon^d \Re\left\{\frac{1}{c_S}\Im\left\{\bGam^\omega_S(\bz_S,\bz_D)\right\}\overline{\bu^S_{j,\ell}(\bz_D)}\cdot\bu^S_{j,\ell}(\bz_S)
\right\}.
\end{align*}
Substituting the expression \eqref{Uj}  for $\bu^S_{j,\ell}$ and once again using   $\mathbf{A}\btheta:\btheta=\mathbf{A}:\btheta\otimes\btheta$, one obtains
\begin{align*}
\mathcal{I}_{\rm TD}^n(\bz_S; S) \approx&  \omega^3\gamma_d\tau\epsilon^d \Re\left\{\frac{1}{c_S}\Im\left\{\bGam^\omega_S(\bz_S,\bz_D)\right\}: \frac{1}{n}\sum_{j=1}^n\sum_{\ell=1}^{d-1}\btheta_j^{\perp,\ell}\otimes\btheta_j^{\perp,\ell} e^{i\K_S(\bz_S-\bz_D)\cdot\btheta_j}
\right\}.
\end{align*}
Since $\{\btheta_j,\btheta_j^{\perp,\ell}\}$ forms an orthonormal basis of $\RR^d$ for each $j=1,\cdots,n$ and  $n\in\NN$ is sufficiently large, therefore identity \eqref{identity6} can be invoked to get the desired result. This completes the proof.
\end{proof}

As discussed in the single-short case, it can be observed from Lemma \ref{LemTD-density} that the multi-short indicator $\bz_S\mapsto\mathcal{I}^n_{\rm TD}(\bz_S;\alpha)$ is proportional to $\|\Im\left\{\bGam^\omega_\alpha(\bz_S,\bz_D)\right\}\|^2$ (instead of $\|\Im\left\{\bGam^\omega_\alpha(\bz_S,\bz_D)\right\}\|$ as in the single-short case). Therefore, multi-short indicator in density contrast case attains most prominent increase as $\bz_S$ approaches $\bz_D$ and decays rapidly as $\bz_S$ moves away from $\bz_D$. Moreover, the resolution of the multi-short indicator is better than that of the single-short indicator.

\subsubsection{Elasticity contrast}

If  $D$ has only an elasticity contrast then the trial inclusion is also nucleated with an elastic contrast only.  Therefore, location indicator $\mathcal{I}_{\rm TD}[\bu^{\rm inc}]$ and back-propagator $\mathcal{H}[\bu^{\infty}_{\epsilon,\alpha}]$ for an elasticity contrast admit expressions
\begin{align}
\label{ITD-EC}
&\mathcal{I}_{\rm TD}[\bu^{\rm inc}](\bz_S) = \sum_{\alpha=P,S} \Re\left\{a^\alpha_d\overline{\nabla\mathcal{H}[\bu^\infty_{\epsilon,\alpha}](\bz_S)}:\MM(B_S):\nabla\bu^{\rm inc}(\bz_S)\right\},
\\
\label{Herglotz-EC}
&\mathcal{H}[\bu^\infty_{\epsilon,\alpha}](\bz_S)\approx - 4\left(\frac{\pi}{\K_\alpha}\right)^{d-2}\rho_0 c_\alpha^2 a^\alpha_d \epsilon^d \nabla_{\bz_S}\Im\left\{\bGam^\omega_\alpha(\bz_S,\bz_D)\right\}:\MM(B_D):\nabla\bu^{\rm inc}(\bz_D). 
\end{align}
Unlike the density contrast case, the back-propagators have both solenoidal and irrotational components. This is because of the mode-conversion phenomenon due to a discrepancy between the Lam\'e parameters of the inclusion and the background medium.  The asymptotic expression of $\mathcal{I}_{\rm TD}[\bu^{\rm inc}]$ is obtained by substituting Eq. \eqref{Herglotz-EC} in Eq. \eqref{ITD-EC} as
\begin{align}
\mathcal{I}_{\rm TD}[\bu^{\rm inc}](\bz_S) 
\approx &
-\frac{\gamma_d}{\omega}\epsilon^d \Re\Bigg\{\nabla_{\bz_S}\left(\sum_{\alpha=P,S}\frac{1}{c_\alpha}\nabla_{\bz_S}\Im\left\{\bGam^\omega_\alpha(\bz_S,\bz_D)\right\}:\MM(B_D):\overline{\nabla\bu^{\rm inc}(\bz_D)}\right)
 :\MM(B_S):\nabla\bu^{\rm inc}(\bz_S)\Bigg\}.
\label{ITD-elasticity}
\end{align}

Apparently, expression \eqref{ITD-elasticity} of the indicator $\mathcal{I}_{\rm TD}[\bu^{\rm inc}]$ in the elasticity contrast case is similar to that in the density contrast case, specifically,  $\mathcal{I}_{\rm TD}[\bu^{\rm inc}](\bz_S)$ is proportional to some function of $\Im\left\{\bGam^\omega_\alpha(\bz_S,\bz_D)\right\}$. However, there are significant differences between expressions \eqref{ITD-density} and \eqref{ITD-elasticity}. First of all, the location indicator in elasticity contrast case is a weighted sum of the P- and S-components of the Kupradze matrix wherein the weights depend on the corresponding wave speeds and the EMT. It is not clear if the weighted sum achieves the maximum increase when $\bz_S\to\bz_D$. Secondly, there is a mode-conversion occurring through the EMT due to the contrast between the Lam\'e parameters (see, e.g., \cite{Mode}). Therefore, it is not guaranteed that the topological sensitivity based location indicator renders the true location of the inclusion. Even if it does so, the mode-coupling between P- and S-components will degenerate the resolution of the localization.   These observations will be backed by the analysis of the multi-short location indicator. Towards this end, the following result holds. 

\begin{lem}\label{LemTD-elasticity}
If  $n\in\NN$  is sufficiently large then, for any search point $\bz_S\in\RR^d$, 
\begin{align}
\mathcal{I}_{\rm TD}^n(\bz_S; \alpha) \approx \frac{4\gamma_d\rho_0c_\alpha^2\epsilon^d}{\omega} \left( \frac{\pi}{\K_\alpha}\right)^{d-2}\sum_{\beta=P,S}\frac{1}{c_\beta}\Psi_{\alpha,\beta}(\bz_S,\bz_D),\label{intermid14}
\end{align}
where $\Psi_{\alpha,\beta}:\RR^d\times\RR^d\to\RR$, for $\alpha,\beta=P,S$, is defined as 
\begin{align}
\Psi_{\alpha,\beta}(\bz,\bz'):=\left(\MM(B_D)\bullet\Im\left\{\nabla^2_\bz\bGam^\omega_\alpha(\bz,\bz')\right\}\right):\left(\MM(B_S)\bullet\Im\left\{\nabla^2_\bz\bGam^\omega_\beta(\bz,\bz')\right\}\right)^\top.
\label{S}
\end{align}
\end{lem}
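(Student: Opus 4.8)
The plan is to mimic, mode by mode, the argument used in Lemma~\ref{LemTD-density}, but starting from the elasticity-contrast expressions \eqref{ITD-EC}--\eqref{Herglotz-EC} rather than the density-contrast ones. First I would fix $\alpha=P$ and specialize \eqref{ITD-elasticity} to the probing field $\bu^{\rm inc}=\bu^P_j$ given in \eqref{Uj}. Since $\nabla\bu^P_j(\bx)=i\K_P\,\btheta_j\otimes\btheta_j\,e^{i\K_P\bx\cdot\btheta_j}$ (and similarly $\nabla^2\bu^P_j$ carries a factor $\btheta_j\otimes\btheta_j\otimes\btheta_j$ times the scalar exponential), averaging $\mathcal{I}_{\rm TD}[\bu^P_j](\bz_S)$ over $j=1,\dots,n$ produces, after moving the average inside and using the tensor contraction identity $\mathbb{H}:(\btheta\otimes\btheta)\otimes(\btheta\otimes\btheta)$-type reorderings, a term of the form
\begin{align*}
\frac{1}{n}\sum_{j=1}^n e^{i\K_P(\bz_S-\bz_D)\cdot\btheta_j}\,\btheta_j\otimes\btheta_j\otimes\btheta_j\otimes\btheta_j
\end{align*}
contracted against the EMT's of $B_D$ and $B_S$ and against $\nabla^2_{\bz_S}\Im\{\bGam^\omega_\alpha(\bz_S,\bz_D)\}$. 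At this point I would invoke the approximation \eqref{identity7} to replace that empirical average by $-4\rho_0(c_P^4/\omega^2)(\pi/\K_P)^{d-2}\,\Im\{\nabla^2_{\by}\bGam_P(\by,\bz)\}$ evaluated at $(\bz_S,\bz_D)$, which turns the expression into a contraction of $\MM(B_D)\bullet\Im\{\nabla^2_{\bz_S}\bGam^\omega_\beta\}$ with $\MM(B_S)\bullet\Im\{\nabla^2_{\bz_S}\bGam^\omega_\alpha\}$, i.e.\ exactly $\Psi_{\alpha,\beta}$ of \eqref{S}, with the claimed prefactor $4\gamma_d\rho_0c_\alpha^2\epsilon^d\omega^{-1}(\pi/\K_\alpha)^{d-2}/c_\beta$. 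The sum over $\beta=P,S$ appears because the back-propagator $\mathcal{H}[\bu^\infty_{\epsilon,\alpha}]$ in \eqref{Herglotz-EC} itself already carries the $\alpha$-mode of the Kupradze matrix, while the incident-field gradient $\nabla\bu^P_j$ feeds in through the other EMT; writing out \eqref{ITD-elasticity} one sees the double sum over both mode labels survives. Care is needed with the $\Re\{\cdot\}$ and the $\overline{\,\cdot\,}$ in \eqref{ITD-elasticity}: since $\MM(B_D)$, $\MM(B_S)$ are real symmetric and we have already taken imaginary parts of the Green's tensors, the real part acts trivially after using \eqref{identity7}, which is why the final answer is real (consistent with $\Psi_{\alpha,\beta}:\RR^d\times\RR^d\to\RR$).

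For $\alpha=S$ the argument is identical in structure: specialize to $\bu^{\rm inc}=\bu^S_{j,\ell}$ from \eqref{Uj}, so $\nabla\bu^S_{j,\ell}$ contributes $i\K_S\,\btheta_j^{\perp,\ell}\otimes\btheta_j\,e^{i\K_S\bx\cdot\btheta_j}$ and $\nabla^2\bu^S_{j,\ell}$ contributes a factor $\btheta_j\otimes\btheta_j^{\perp,\ell}\otimes\btheta_j$ times the scalar exponential. Summing over $j$ and $\ell$ and reorganizing the indices produces the empirical average
\begin{align*}
\frac{1}{n}\sum_{j=1}^n\sum_{\ell=1}^{d-1} e^{i\K_S(\bz_S-\bz_D)\cdot\btheta_j}\,\btheta_j\otimes\btheta_j^{\perp,\ell}\otimes\btheta_j\otimes\btheta_j^{\perp,\ell},
\end{align*}
which is precisely the left-hand side of \eqref{identity8}; substituting its approximation $-4\rho_0(c_S^4/\omega^2)(\pi/\K_S)^{d-2}\Im\{\nabla^2_{\by}\bGam_S\}$ and contracting with the two EMT's again yields $\sum_{\beta}\Psi_{S,\beta}/c_\beta$ with the stated prefactor. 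Here I would lean on the fact that $\{\btheta_j,\btheta_j^{\perp,\ell}:1\le\ell\le d-1\}$ is an orthonormal basis of $\RR^d$ for each $j$, exactly as in the proof of Lemma~\ref{LemTD-density}, so that the sum over $\ell$ closes up the transverse index structure and allows \eqref{identity8} to be applied.

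The main obstacle I anticipate is purely bookkeeping: correctly tracking the four-index ($\bullet$) and double-dot ($:$) contractions so that the contraction pattern coming out of \eqref{ITD-elasticity}, after inserting \eqref{identity7} or \eqref{identity8}, matches verbatim the transposed contraction in the definition \eqref{S} of $\Psi_{\alpha,\beta}$ — in particular justifying the transpose on the $B_S$-factor and verifying that the mode speeds $c_\alpha$, $c_\beta$ end up in the right places after collecting the $a^\alpha_d$ factors from \eqref{a_d}, the $\rho_0c_\alpha^2$ from \eqref{Herglotz-EC}, and the $c_\alpha^4/\omega^2$ from \eqref{identity7}/\eqref{identity8}. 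Beyond that, the two remaining subtleties are (i) confirming that the error terms from the asymptotic identities in Proposition~\ref{Prop:identities} and from Lemma~\ref{LemHexp} are genuinely lower order (they are $o(\epsilon^d\omega^d)$ plus the $O(1/n)$-type error implicit in the ``$\approx$'' of \eqref{identity7}--\eqref{identity8}, so the claimed leading order is unaffected), and (ii) checking that the $\Re\{\cdot\}$ in \eqref{ITD-elasticity} becomes redundant once the empirical averages have been replaced by imaginary parts of Green's tensors contracted against real EMT's — which is what makes the stated formula real-valued and completes the proof.
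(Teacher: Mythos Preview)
Your proposal is correct and follows essentially the same route as the paper's proof: specialize \eqref{ITD-elasticity} to the plane waves \eqref{Uj}, average over $j$ (and over $\ell$ for S-waves), and invoke \eqref{identity7} and \eqref{identity8} to convert the empirical rank-four tensor averages into $\Im\{\nabla^2_{\bz_S}\bGam^\omega_\alpha\}$, after which the result is recognized component-wise as $\Psi_{\alpha,\beta}$. The paper carries out the index bookkeeping you flag as the main obstacle by writing the contractions explicitly in coordinates, but the logic and the key identities used are identical to what you outline.
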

\begin{proof}
Let $\alpha=P$ and $\bTheta^j:=\btheta_j\otimes \btheta_j$. Then, 
\begin{align}
\mathcal{I}_{\rm TD}^n (\bz_S;P) 
\approx
&
-\frac{\gamma_d}{\omega}\epsilon^d \frac{1}{n}\sum_{j=1}^n\Re\Bigg\{\nabla_{\bz_S}\left(\sum_{\alpha=P,S}\frac{1}{c_\alpha}\nabla_{\bz_S}\Im\left\{\bGam^\omega_\alpha(\bz_S,\bz_D)\right\}:\MM(B_D):\overline{\nabla\bu^{P}_j(\bz_D)}\right)
 :\MM(B_S):\nabla\bu^{P}_j(\bz_S)\Bigg\}
\nonumber
\\
=&
-\frac{\gamma_d\omega\epsilon^d }{c_P^2}\frac{1}{n}\sum_{j=1}^n\Re\Bigg\{\nabla_{\bz_S}\left(\sum_{\alpha=P,S}\frac{1}{c_\alpha}\nabla_{\bz_S}\Im\left\{\bGam^\omega_\alpha(\bz_S,\bz_D)\right\}:\MM(B_D):\bTheta^j\right):\MM(B_S):\bTheta^j e^{i\K_P(\bz_S-\bz_D)\cdot\btheta_j}\Bigg\}.
\label{eq:33b}
\end{align}
This can be equivalently written, using definition of the contraction operators and invoking approximation \eqref{identity7}, as
\begin{align*}
\mathcal{I}_{\rm TD}^n&(\bz_S;P)
\\
\approx & -\frac{\gamma_d\omega\epsilon^d}{c_P^2}\Re
\Bigg\{
\sum_{p,q,l,m,p',q',l',m'=1}^d\sum_{\alpha=P,S}\frac{1}{c_\alpha}\Im\left\{\left(\partial_{lp'}\bGam_\alpha^\omega(\bz_S,\bz_D)\right)_{mq'}\right\}
 m_{p'q'}^{l'm'}(B_D) m_{pq}^{lm}(B_S)
\left[\frac{1}{n}\sum_{j=1}^n\bTheta^j_{l'm'}\bTheta^j_{pq} e^{i\K_P(\bz_S-\bz_D)\cdot\btheta_j}\right]
\Bigg\}
\\
\approx &\frac{4\rho_0\gamma_dc_P^2\epsilon^d}{\omega}\left(\frac{\pi}{\K_P}\right)^{d-2}
\sum_{p,q,l,m,p',q',l',m'=1}^d\sum_{\alpha=P,S}\frac{1}{c_\alpha}\Im\left\{\left(\partial_{lp'}{\bGam}_\alpha^\omega(\bz_S,\bz_D)\right)_{mq'}\right\}
m_{p'q'}^{l'm'}(B_D)
\Im\left\{\left(\partial_{l'p}{\bGam}^\omega_P(\bz_S,\bz_D)\right)_{m'q}\right\} m_{pq}^{lm}(B_S) 
\\
= & \frac{4\rho_0\gamma_dc_P^2\epsilon^d}{\omega}\left(\frac{\pi}{\K_P}\right)^{d-2}
\sum_{l,m, l',m'=1}^d\sum_{\alpha=P,S}\frac{1}{c_\alpha}\left(\sum_{p',q'=1}^d\Im\left\{\left(\partial_{lp'}{\bGam}_\alpha^\omega(\bz_S,\bz_D)\right)_{mq'}\right\}
m_{p'q'}^{l'm'}(B_D)\right)
\\
&
\times 
\left(\sum_{p,q=1}^d\Im\left\{\left(\partial_{l'p}{\bGam}^\omega_P(\bz_S,\bz_D)\right)_{m'q}\right\} m_{pq}^{lm}(B_S)\right)
\\
= & \frac{4\rho_0\gamma_dc_P^2\epsilon^d}{\omega}\left(\frac{\pi}{\K_P}\right)^{d-2}\sum_{\alpha=P,S}\frac{1}{c_\alpha}\left(\MM(B_D)\bullet\Im\left\{\nabla_{\bz_S}^2\bGam_\alpha^\omega(\bz_S,\bz_D)\right\}\right) :\left(\MM(B_S)\bullet\Im\left\{\nabla_{\bz_S}^2{\bGam}^\omega_P(\bz_S,\bz_D)\right\}\right)^\top.
\end{align*}
The conclusion follows from the definition of $\Psi_{\alpha,\beta}$. Similarly, for $\alpha=S$, if $\widetilde{\bTheta}^{j,\ell}:=\btheta_{j}\otimes \btheta_j^{\perp,\ell}$ then
\begin{align*}
\mathcal{I}_{\rm TD}^n (\bz_S;S) 
\approx&
-\frac{\gamma_d\omega\epsilon^d }{c_S^2}\frac{1}{n}\sum_{j=1}^n\sum_{\ell=1}^{d-1}\Re\Bigg\{
\nabla_{\bz_S}\left(\nabla_{\bz_S} \Im\left\{\sum_{\alpha=P,S}\frac{1}{c_\alpha}\bGam_\alpha^\omega(\bz_S,\bz_D)\right\}:\MM(B_D):\widetilde{\bTheta}^{j,\ell}\right)
:\MM(B_S):\widetilde{\bTheta}^{j,\ell} e^{i\K_S(\bz_S-\bz_D)\cdot\btheta_j}
\Bigg\},
\end{align*}
or equivalently, 
\begin{align*}
\mathcal{I}_{\rm TD}^n(\bz_S;S)\approx & -\frac{\gamma_d\omega\epsilon^d}{c_S^2}\Re
\Bigg\{
\sum_{p,q,l,m,p',q',l',m'=1}^d\sum_{\alpha=P,S}\frac{1}{c_\alpha}\Im\left\{\left(\partial_{lp'}\bGam_\alpha^\omega(\bz_S,\bz_D)\right)_{mq'}\right\}
\\
&
\times
m_{p'q'}^{l'm'}(B_D) m_{pq}^{lm}(B_S)
\left[\frac{1}{n}\sum_{j=1}^n\sum_{\ell=1}^{d-1}\widetilde{\bTheta}^{j,\ell}_{l'm'}\widetilde{\bTheta}^{j,\ell}_{pq} e^{i\K_S(\bz_S-\bz_D)\cdot\btheta_j}\right]
\Bigg\}.
\end{align*}
Therefore, thanks to approximation \eqref{identity8},  
\begin{align*}
\mathcal{I}_{\rm TD}^n(\bz_S;S)\approx 
& \frac{4\rho_0\gamma_dc_S^2\epsilon^d}{\omega}\left(\frac{\pi}{\K_S}\right)^{d-2}
\sum_{p,q,l,m,p',q',l',m'=1}^d\sum_{\alpha=P,S}\frac{1}{c_\alpha}\Im\left\{\left(\partial_{lp'}\bGam_\alpha^\omega(\bz_S,\bz_D)\right)_{mq'}\right\}
\\
&
\times 
m_{p'q'}^{l'm'}(B_D)
\Im\left\{\left(\partial_{l'p}{\bGam}^\omega_S(\bz_S,\bz_D)\right)_{m'q}\right\} m_{pq}^{lm}(B_S)
\\
=& \frac{4\rho_0\gamma_dc_S^2\epsilon^d}{\omega}\left(\frac{\pi}{\K_S}\right)^{d-2}
\sum_{l,m, l',m'=1}^d\sum_{\alpha=P,S}\frac{1}{c_\alpha}\left(\sum_{p',q'=1}^d\Im\left\{\left(\partial_{lp'}\bGam_\alpha^\omega(\bz_S,\bz_D)\right)_{mq'}\right\}
m_{p'q'}^{l'm'}(B_D)\right)
\\
&
\times
\left(\sum_{p,q=1}^d\Im\left\{\left(\partial_{l'p}{\bGam}^\omega_S(\bz_S,\bz_D)\right)_{m'q}\right\} m_{pq}^{lm}(B_S)\right)
\\
=& \frac{4\rho_0\gamma_dc_S^2\epsilon^d}{\omega}\left(\frac{\pi}{\K_S}\right)^{d-2}\sum_{\alpha=P,S}\frac{1}{c_\alpha}\left(\MM(B_D)\bullet\Im\left\{\nabla_{\bz_S}^2\bGam_\alpha^\omega(\bz_S,\bz_D)\right\}\right)
:\left(\MM(B_S)\bullet\Im\left\{\nabla_{\bz_S}^2{\bGam}^\omega_S(\bz_S,\bz_D)\right\}\right)^\top,
\end{align*}
which leads to the desired result by definition of $\Psi_{\alpha,\beta}$.
\end{proof}

 It is clear from Eq. \eqref{intermid14} that $\bz_S\mapsto\mathcal{I}_{\rm TD}^n(\bz_S;\alpha)$ may not attain its maximum increase near $\bz_D$. In fact, the result obtained in Lemma  \ref{LemTD-elasticity} indicates that  $\mathcal{I}_{\rm TD}^n$ is actually proportional to the weighted sum $\sum_{\beta}\Psi_{\alpha,\beta}/c_\beta$ in elasticity contrast case. The term $\Psi_{\alpha,\alpha}$ achieves a sharp peak when $\bz_S\to\bz_D$, however, the coupling term $\Psi_{P,S}$ degenerates the localization capability of the indicator $\bz_S\mapsto\mathcal{I}_{\rm TD}^n(\bz_S;\alpha)$. On  the other hand, even if the indicator somehow shows the maximum increase near $\bz_D$, e.g., when the coupling term is relatively small, only a sub-optimal resolution can  be achieved. Therefore, in order to ensure a guaranteed detection of the inclusion in the elasticity contrast case with optimal resolution as per Rayleigh criterion, one must get rid of the coupling term. Towards this end,  (properly) weighted location indicators are introduced in the next section based on the Helmholtz decomposition of  the back-propagators. 

\subsection{Weighted location indicators}

Let us define the weighted single-short and multi-short location indicators, respectively, by
\begin{align}
\label{ITDW}
\mathcal{I}_{\rm W}[\bu^{\rm inc}](\bz_S):=& \sum_{\alpha=P,S} c_\alpha
\Re\Bigg\{
a^\alpha_d\overline{\nabla\Pk_\alpha\left[\mathcal{H}[\bu^\infty_{\epsilon,\alpha}]\right](\bz_S)}:\MM(B_S):\nabla\Pk_\alpha\left[\bu^{\rm inc}\right](\bz_S)
\nonumber
\\
&-a^\alpha_d\omega^2(\rho_0-\rho_2)|B_S|\overline{\Pk_\alpha\left[\mathcal{H}[\bu^\infty_{\epsilon,\alpha}]\right](\bz_S)}\cdot\Pk_\alpha[\bu^{\rm inc}](\bz_S) 
\Bigg\},
\\
\mathcal{I}_{\rm W}(\bz_S; P):=& \frac{1}{n}\sum_{j=1}^n\mathcal{I}_{\rm W}[\bu^P_j](\bz_S)
\quad\text{and}\quad
\mathcal{I}_{\rm W}(\bz_S; S):= \frac{1}{n}\sum_{j=1}^n\sum_{\ell=1}^{d-1}\mathcal{I}_{\rm W}[\bu^{S}_{j,\ell}](\bz_S).
\label{IWn}
\end{align}
Here, the P- and S-components of the back-propagator are correlated with the corresponding components of the incident fields  so that the coupling terms can be avoided. 
 In fact, the function $\mathcal{I}_{\rm W}[\bu^{\rm inc}]$ is roughly the same as $\mathcal{I}_{\rm TD}[\bu^{\rm inc}]$ in the density contrast case except for the constant weighting with $c_\alpha$. It is  therefore  linked to the topological derivative of a cost functional thanks to the non-conversion of waveforms in the presence of the inclusion. However, for the elasticity contrast case, $\mathcal{I}_{\rm W}[\bu^{\rm inc}]$   cannot be the topological derivative of some discrepancy function. Instead,  it is from the class of Kirchhoff-type location indicators, yet it is topologically \emph{sensitive} with respect to the nucleation of a trial inclusion at any search location. 

The aim of this section is to establish that $\mathcal{I}_{\rm W}[\bu^{\rm inc}]$ provides guaranteed localization of the inclusion in both density and elasticity contrast cases. As already mentioned, the weighted location indicator is the same as $\mathcal{I}_{\rm TD}[\bu^{\rm inc}]$ up to a constant weighting with $c_\alpha$ for density contrast case.   In fact,  
\begin{align}
\label{IW-DC}
\mathcal{I}_{\rm W}[\bu^{\rm inc}](\bz_S)= -\sum_{\alpha=P,S} \Re\left\{c_\alpha a_{\alpha,d}\omega^2(\rho_0-\rho_2)|B_S|\overline{\Pk_\alpha\left[\mathcal{H}[\bu^{\infty}_{\epsilon,\alpha}]\right](\bz_S)}\cdot\Pk_\alpha\left[\bu^{\rm inc}\right](\bz_S)\right\}.
\end{align}
Since $\mathcal{H}[\bu^{\infty}_{\epsilon, P}](\bz_S)$ and $\mathcal{H}[\bu^{\infty}_{\epsilon, S}](\bz_S)$ are respectively solenoidal and irrotational when $\epsilon^d\to 0$, Lemma \ref{LemHexp} renders
\begin{align*}
\mathcal{I}_{\rm W}[\bu^{\rm inc}](\bz_S)\approx \omega^3\gamma_d\tau\epsilon^d \Re\left\{\sum_{\alpha=P,S}\Im\left\{\bGam^\omega_\alpha(\bz_S,\bz_D)\right\} \overline{\bu^{\rm inc}(\bz_D)}\cdot\Pk_\alpha[\bu^{\rm inc}](\bz_S)
\right\}.
\end{align*}
On the other hand, in elasticity contrast case,
\begin{align}
\label{IW-EC}
\mathcal{I}_{\rm W}[\bu^{\rm inc}](\bz_S) = \sum_{\alpha=P,S} c_\alpha
\Re\left\{a^\alpha_d\overline{\nabla\Pk_\alpha\left[\mathcal{H}[\bu^\infty_{\epsilon,\alpha}]\right](\bz_S)}:\MM(B_S):\nabla\Pk_\alpha\left[\bu^{\rm inc}\right](\bz_S)\right\}.
\end{align}
On substituting Eq. \eqref{Herglotz-EC} in Eq. \eqref{IW-EC} and  after fairly easy manipulations, one arrives at
\begin{align*}
\mathcal{I}_{\rm W}[\bu^{\rm inc}](\bz_S) 
\approx&
-\frac{\gamma_d}{\omega}\epsilon^d \Re\Bigg\{\sum_{\alpha=P,S} \nabla_{\bz_S}\left( \nabla_{\bz_S}\Im\left\{\bGam^\omega_\alpha(\bz_S,\bz_D)\right\}:\MM(B_D):\overline{\nabla\bu^{\rm inc}(\bz_D)}\right)
 :\MM(B_S):\Pk_\alpha\left[\nabla\bu^{\rm inc}\right](\bz_S)\Bigg\}.
\end{align*}

Lemma \ref{LemW} highlights the topological sensitivity features of the weighted location indicators and can be easily proved using similar arguments as in Lemma \ref{LemTD-elasticity}.  
\begin{lem}\label{LemW}
If $n\in\NN$  is sufficiently large then, for any search point $\bz_S\in\RR^d$, 
\begin{align}
&\mathcal{I}_{\rm W}^n(\bz_S; \alpha) \approx 4\gamma_d\rho_0c_\alpha^2 \omega^3\tau\epsilon^d\left( \frac{\pi}{\K_\alpha}\right)^{d-2}\Phi_{\alpha,\alpha}(\bz_S,\bz_D), &\text{(density contrast)},
\label{Eq1:LemW}
\\
&\mathcal{I}_{\rm W}^n(\bz_S; \alpha) \approx \frac{4\gamma_d\rho_0c_\alpha^2\epsilon^d}{\omega} \left( \frac{\pi}{\K_\alpha}\right)^{d-2}\Psi_{\alpha,\alpha}(\bz_S,\bz_D), &\text{(elasticity contrast)},
\label{Eq2:LemW}
\end{align}
where $\Phi_{\alpha,\beta}$ and $\Psi_{\alpha,\beta}$ are defined in \eqref{T} and \eqref{S} respectively.
\end{lem}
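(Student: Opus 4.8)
The proof runs along the lines of Lemmas \ref{LemTD-density} and \ref{LemTD-elasticity}; the one genuinely new ingredient is the way the Helmholtz projections $\Pk_\alpha$ interact with the polarization of the plane waves in \eqref{Uj}. The plan is to record two polarization facts and then to carry out verbatim the directional averages already used for $\mathcal{I}_{\rm TD}^n$, the point being that the projections force the mode-coupling terms to drop out. The two facts are the following. On the one hand, the back-propagator $\mathcal{H}[\bu^{\infty}_{\epsilon,\alpha}]$ is automatically a pure $\alpha$-mode field: its Herglotz kernel $\bu^{\infty}_{\epsilon,\alpha}$ belongs to $L^2_\alpha(\mathbb{S}^{d-1})$, so $\mathcal{H}[\bu^{\infty}_{\epsilon,P}]$ is a gradient field (hence curl-free) and $\mathcal{H}[\bu^{\infty}_{\epsilon,S}]$ is divergence-free, whence $\Pk_\alpha\big[\mathcal{H}[\bu^{\infty}_{\epsilon,\alpha}]\big]=\mathcal{H}[\bu^{\infty}_{\epsilon,\alpha}]$. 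On the other hand, each probing wave in \eqref{Uj} is purely polarized, since $\nabla\times\big(\btheta_j e^{i\K_P\bx\cdot\btheta_j}\big)=\mathbf{0}$ and $\nabla\cdot\big(\btheta_j^{\perp,\ell}e^{i\K_S\bx\cdot\btheta_j}\big)=0$; hence $\Pk_P[\bu^{P}_j]=\bu^{P}_j$, $\Pk_S[\bu^{P}_j]=\mathbf{0}$, $\Pk_S[\bu^{S}_{j,\ell}]=\bu^{S}_{j,\ell}$, $\Pk_P[\bu^{S}_{j,\ell}]=\mathbf{0}$, and likewise for their gradients because $\Pk_\alpha$ commutes with $\nabla$. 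Consequently, in $\mathcal{I}_{\rm W}[\bu^{P}_j]$ only the $\alpha=P$ summand of \eqref{ITDW} survives, and in $\mathcal{I}_{\rm W}[\bu^{S}_{j,\ell}]$ only the $\alpha=S$ summand survives; this is exactly the collapse of the $\alpha$-sum that removes the mode-coupling term $\Psi_{P,S}$ present in Lemma \ref{LemTD-elasticity}.

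For the density-contrast case I would insert \eqref{Herglotz-DC} into \eqref{IW-DC}, substitute the kernels \eqref{Uj}, use the identity $\mathbf{A}\btheta:\btheta=\mathbf{A}:\btheta\otimes\btheta$, and average over $j$ (and, for $\alpha=S$, also over $\ell$); the approximations \eqref{identity4} and \eqref{identity6} then turn $\tfrac{1}{n}\sum_j\btheta_j\otimes\btheta_j\,e^{i\K_P(\bz_S-\bz_D)\cdot\btheta_j}$ and $\tfrac{1}{n}\sum_{j,\ell}\btheta_j^{\perp,\ell}\otimes\btheta_j^{\perp,\ell}\,e^{i\K_S(\bz_S-\bz_D)\cdot\btheta_j}$ into multiples of $\Im\{\bGam^\omega_P(\bz_S,\bz_D)\}$ and $\Im\{\bGam^\omega_S(\bz_S,\bz_D)\}$. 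Contracting this new copy of $\Im\{\bGam^\omega_\alpha\}$ against the one already present in \eqref{Herglotz-DC} produces $\Phi_{\alpha,\alpha}$ of \eqref{T}, and gathering the constants with the help of \eqref{a_d}, $\gamma_d=1/(2^{d-1}\pi\rho_0)$ and $\tau$ gives \eqref{Eq1:LemW}; the prefactor carries one extra power of $c_\alpha$ compared with Lemma \ref{LemTD-density}, owing to the weight $c_\alpha$ in \eqref{ITDW}.

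For the elasticity-contrast case, by the two facts above the formula for $\mathcal{I}_{\rm W}[\bu^{\rm inc}]$ displayed just after \eqref{IW-EC}, evaluated at $\bu^{\rm inc}=\bu^{P}_j$ (resp.\ $\bu^{S}_{j,\ell}$), collapses to \eqref{eq:33b} with the inner sum $\sum_{\alpha=P,S}\tfrac{1}{c_\alpha}\nabla_{\bz_S}\Im\{\bGam^\omega_\alpha\}$ replaced by the single term $\nabla_{\bz_S}\Im\{\bGam^\omega_P\}$ (resp.\ $\nabla_{\bz_S}\Im\{\bGam^\omega_S\}$). From there one repeats the manipulations of Lemma \ref{LemTD-elasticity}: apply $\mathbf{A}\btheta:\btheta=\mathbf{A}:\btheta\otimes\btheta$, expand the $\MM$-contractions in index notation, and use \eqref{identity7} (resp.\ \eqref{identity8}) to evaluate $\tfrac{1}{n}\sum_j\bTheta^j_{l'm'}\bTheta^j_{pq}\,e^{i\K_P(\bz_S-\bz_D)\cdot\btheta_j}$ (resp.\ $\tfrac{1}{n}\sum_{j,\ell}\widetilde{\bTheta}^{j,\ell}_{l'm'}\widetilde{\bTheta}^{j,\ell}_{pq}\,e^{i\K_S(\bz_S-\bz_D)\cdot\btheta_j}$) in terms of $\Im\{\nabla^2_{\bz_S}\bGam^\omega_\alpha(\bz_S,\bz_D)\}$; the two Hessians, one coming from the back-propagator and one from the directional average, then reassemble through the conventions for $\bullet$ and $:$ into $\Psi_{\alpha,\alpha}$ of \eqref{S}, and collecting the constants yields \eqref{Eq2:LemW}.

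Everything past the polarization observations is a routine variant of the index bookkeeping already carried out for $\mathcal{I}_{\rm TD}^n$, so the main --- and essentially the only --- point to pin down is the claim in the first paragraph that $\Pk_\alpha$ leaves $\mathcal{H}[\bu^{\infty}_{\epsilon,\alpha}]$ unchanged and annihilates the opposite-mode part of each plane wave; this is precisely what makes the weighted indicator, unlike $\mathcal{I}_{\rm TD}^n$ in the elasticity-contrast case, free of mode coupling.
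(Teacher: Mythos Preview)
Your proposal is correct and matches the paper's intended argument: the paper gives no detailed proof of Lemma \ref{LemW}, merely noting that it ``can be easily proved using similar arguments as in Lemma \ref{LemTD-elasticity},'' and your outline is precisely that --- recycle the directional averages of Lemmas \ref{LemTD-density} and \ref{LemTD-elasticity} once the polarization facts have collapsed the $\alpha$-sum. Your explicit isolation of the two polarization observations (that $\mathcal{H}[\bu^{\infty}_{\epsilon,\alpha}]$ is exactly an $\alpha$-mode Herglotz field because its kernel lies in $L^2_\alpha(\mathbb{S}^{d-1})$, and that the probing waves $\bu^{P}_j$, $\bu^{S}_{j,\ell}$ are pure modes) is in fact a cleaner justification of the collapse than the paper's own asymptotic remarks after \eqref{Herglotz-DC}, and your bookkeeping of the constants (one extra $c_\alpha$ in the density case, the disappearance of the factor $1/c_\beta$ accompanying $\Psi_{\alpha,\beta}$ in the elasticity case) checks out against \eqref{Eq1:LemW} and \eqref{Eq2:LemW}.
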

The result in Lemma \ref{LemW} clearly shows that the coupling term $\Psi_{P,S}$ has disappeared and the weighted location indicator $\mathcal{I}_{\rm W}^n$ is proportional to $\Phi_{\alpha,\alpha}$ and $\Psi_{\alpha,\alpha}$ in density and elasticity contrast cases, respectively. It can be established that these functions attain their maximum increase   near the true location of the inclusion thanks to the imaginary parts of the fundamental solutions $\bGam^\omega_\alpha$. For example, the following expressions for $\Psi_{\alpha,\alpha}$ were derived by Ammari et al  \cite[Prop. 4.3, 4.4]{TDelastic} for circular and spherical  inclusions when EMT has the form \eqref{Mball}.
\begin{lem}
Let $D$ be a ball in $\RR^d$, for $d=2,3$. Then, 
\begin{align*}
\Psi_{P,P}(\bz,\bz_D) = & 
a^2\Big| \nabla^2\left[\Im\{\bGam_P^\omega(\bz-\bz_D)\}\right]\Big|^2
+
2ab\Big| \Delta\left[\Im\{\bGam_P^\omega(\bz-\bz_D)\}\right]\Big|^2
+ 
b^2\Big| \Delta {\rm Tr}\left[\Im\{\bGam_P^\omega(\bz-\bz_D)\}\right]\Big|^2,
\\
\Psi_{S,S}(\bz,\bz_D) = & 
\frac{a^2}{\mu_0^2}\Bigg[\frac{1}{\K_S^2}\sum_{p,q,l,m, m\neq l}\Big| \partial_{pqlm}\left[\Im\{g_S^\omega(\bz-\bz_D)\}\right]\Big|^2
+
\frac{(d-2)}{4}\Big| \nabla^2\left[\Im\{g_S^\omega(\bz-\bz_D)\}\right]\Big|^2
+
\frac{\K_S^4}{4}\Big| \Im\{g_S^\omega(\bz-\bz_D)\}\Big|^2
\Bigg].
\end{align*}
\end{lem}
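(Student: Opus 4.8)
The two identities are obtained by a direct computation. The plan is to substitute the ball moment tensor formula \eqref{Mball}, for both $\MM(B_D)$ and $\MM(B_S)$, together with the explicit representations of $\bGam^\omega_P$ and $\bGam^\omega_S$ recalled in Section \ref{sss:KM}, into the definition \eqref{S} of $\Psi_{\alpha,\alpha}$, to carry out the tensor contractions index by index, and to collapse the resulting fourth-order traces by means of the scalar Helmholtz identity $(\Delta+\K_\alpha^2)g^\omega_\alpha=0$, valid away from the source.

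\emph{The pressure case.} Since $\bGam^\omega_P(\bz,\bz_D)=-(\mu_0\K_S^2)^{-1}\nabla^2_\bz\, g^\omega_P(\bz-\bz_D)$, the fourth-order tensor $\Im\{\nabla^2_\bz\bGam^\omega_P\}$ has entries proportional to $\partial_{lmpq}\Im\{g^\omega_P\}$ and is therefore totally symmetric in its four indices. Hence, when \eqref{Mball} is inserted, the symmetric part $\tfrac a2(\delta_{lp}\delta_{mq}+\delta_{lq}\delta_{mp})$ of the moment tensor acts on it as multiplication by $a$, while the trace part $b\,\delta_{lm}\delta_{pq}$ contributes $b\,\delta_{l'm'}$ times the partial trace, which after commuting derivatives equals $b\,\delta_{l'm'}\,\Delta\Im\{(\bGam^\omega_P)_{lm}\}$. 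Performing the double contraction of \eqref{S} between $\MM(B_D)\bullet\Im\{\nabla^2\bGam^\omega_P\}$ and $\MM(B_S)\bullet\Im\{\nabla^2\bGam^\omega_P\}$, and identifying the partial traces with $\Delta\Im\{\bGam^\omega_P\}$ and $\Delta\,{\rm Tr}\,\Im\{\bGam^\omega_P\}$, the cross terms organise precisely into the $a^2$, $2ab$, and $b^2$ contributions claimed for $\Psi_{P,P}$. The Helmholtz identity is not needed at this step, and the dimension $d$ drops out because total symmetry lets the symmetrised moment tensor act as a scalar.

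\emph{The shear case.} Here $\bGam^\omega_S(\bz,\bz_D)=\mu_0^{-1}\big(\I_d+\K_S^{-2}\nabla^2_\bz\big)g^\omega_S(\bz-\bz_D)$, so $\Im\{\nabla^2_\bz\bGam^\omega_S\}$ splits into a totally symmetric fourth-derivative part with entries $\mu_0^{-1}\K_S^{-2}\partial_{lmpq}\Im\{g^\omega_S\}$ and an identity part $\mu_0^{-1}\delta_{mq}\partial_{lp}\Im\{g^\omega_S\}$, and is no longer totally symmetric. Two Helmholtz reductions enter. First, contracting the trace part $b\,\delta_{lm}\delta_{pq}$ of \eqref{Mball} against the fourth-derivative part produces $\sum_p\partial_l\partial_p\partial_m\partial_p\Im\{g^\omega_S\}=\partial_l\partial_m\Delta\Im\{g^\omega_S\}=-\K_S^2\,\partial_l\partial_m\Im\{g^\omega_S\}$, which exactly cancels the contribution of the identity part, so the entire $b$-dependence disappears, in agreement with the $b$-free form of $\Psi_{S,S}$. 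Second, after forming the double contraction of \eqref{S} between the surviving symmetrised $a$-part of $\MM(B_D)\bullet\Im\{\nabla^2\bGam^\omega_S\}$ and the corresponding $\MM(B_S)$-term, one expands into the nine cross-products of the three summands and reduces every cross-product carrying a Laplacian-type contracted index by $\Delta\,\partial^k\Im\{g^\omega_S\}=-\K_S^2\,\partial^k\Im\{g^\omega_S\}$. The fully contracted cross-product leaves $\tfrac{\K_S^4}{4}|\Im\{g^\omega_S\}|^2$; the partially contracted ones, together with the diagonal part of the residual genuine fourth-order term, combine into $\tfrac{d-2}{4}|\nabla^2\Im\{g^\omega_S\}|^2$, the factor $d-2$ stemming from the trace $\delta_{mm}=d$ of the identity part; and the strictly off-diagonal fourth derivatives remain as $\tfrac{1}{\K_S^2}\sum_{p,q,l,m:\,m\ne l}|\partial_{pqlm}\Im\{g^\omega_S\}|^2$.

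\emph{The main obstacle.} The pressure case and the $b$-cancellation in the shear case are short and structural; the delicate part is the second reduction of the shear case, namely keeping track of the nine index contractions in the double contraction of the symmetrised $a$-part with itself, applying the Helmholtz reduction consistently at each repeated pair, and correctly separating the diagonal contribution of the residual fourth-order term --- which must be folded back into the Hessian term to yield the coefficient $(d-2)/4$ --- from the off-diagonal contribution, which carries the coefficient $1/\K_S^2$. This is exactly the computation carried out in \cite[Prop.~4.3, 4.4]{TDelastic}, to which we refer for the full algebra.
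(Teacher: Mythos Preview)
The paper does not supply its own proof of this lemma: it merely states the two formulas and attributes them to Ammari et al.\ \cite[Prop.~4.3, 4.4]{TDelastic}. Your proposal is therefore more than the paper offers --- you give a structured outline of the computation (insert the ball EMT \eqref{Mball} into \eqref{S}, exploit total symmetry of $\nabla^2\bGam^\omega_P$ in the pressure case, and use the Helmholtz identity $(\Delta+\K_S^2)g^\omega_S=0$ to obtain the $b$-cancellation and the index reductions in the shear case) and then defer the full algebra to the very same reference the paper cites. In that sense your approach is consistent with, and strictly more informative than, the paper's treatment; there is nothing to compare beyond noting that both ultimately point to \cite{TDelastic} for the detailed calculation.
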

In Fig. \ref{Fig1}, numerical realizations are provided for the focal spot of the functions $\Phi_{\alpha,\alpha}$ and $\Psi_{\alpha,\alpha}$ which clearly indicate that $\bz_S\mapsto\mathcal{I}_{\rm W}[\bu^{\rm inc}](\bz_S)$ attains its most prominent increase in the neighborhood of the true location (in this case, the origin) and therefore has a sharp peak at $\bz_D$ with a focal spot size of the order of half the wavelength of the corresponding wave-mode. 
\begin{figure}[!htb]
\begin{center}
\subfigure[{$\Phi_{P,P}$}]{\includegraphics[width=0.45\textwidth]{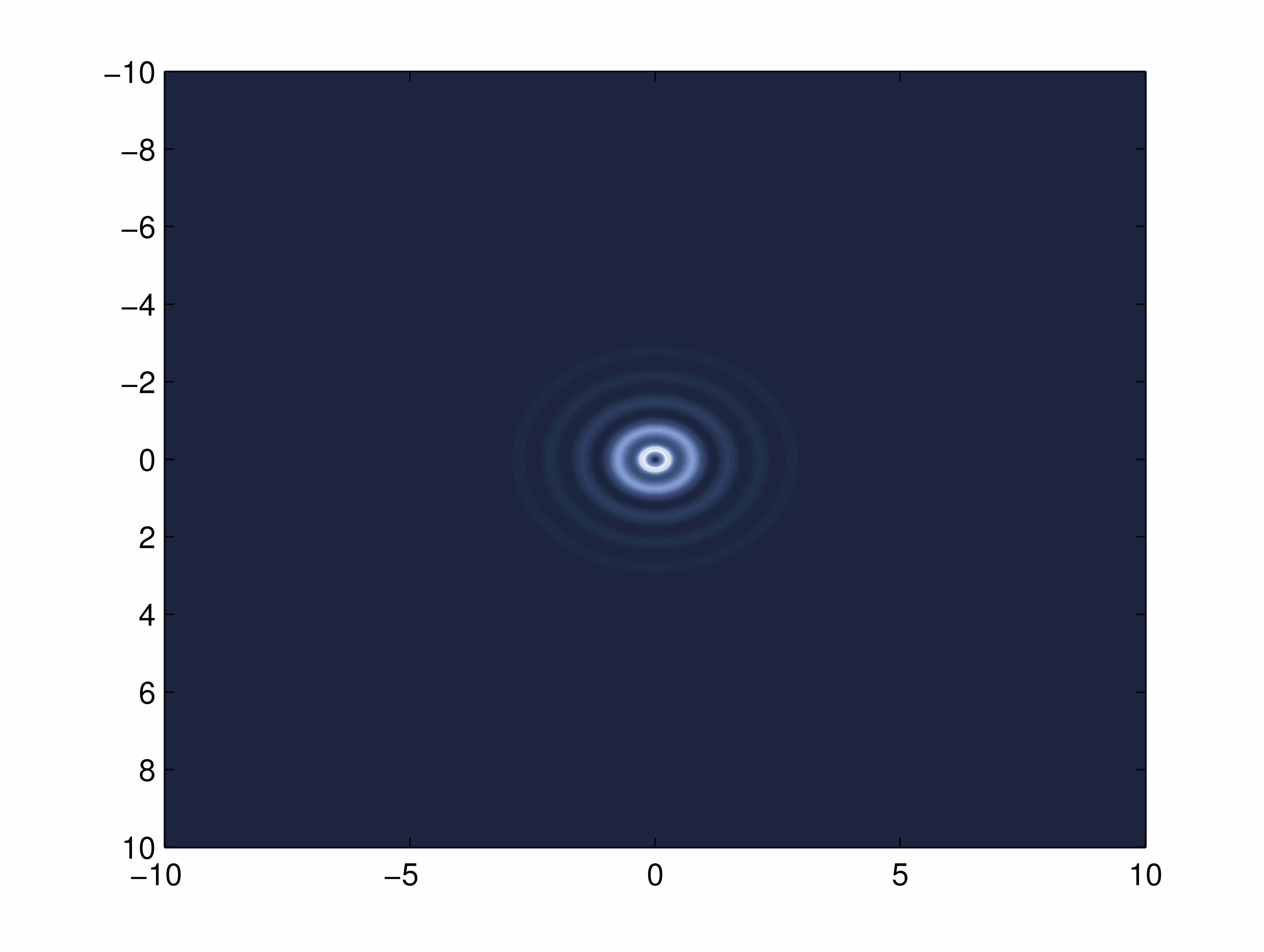}\label{Tpp} }
\hspace{0.5cm}
\subfigure[{$\Phi_{S,S}$}]{\includegraphics[width=0.45\textwidth]{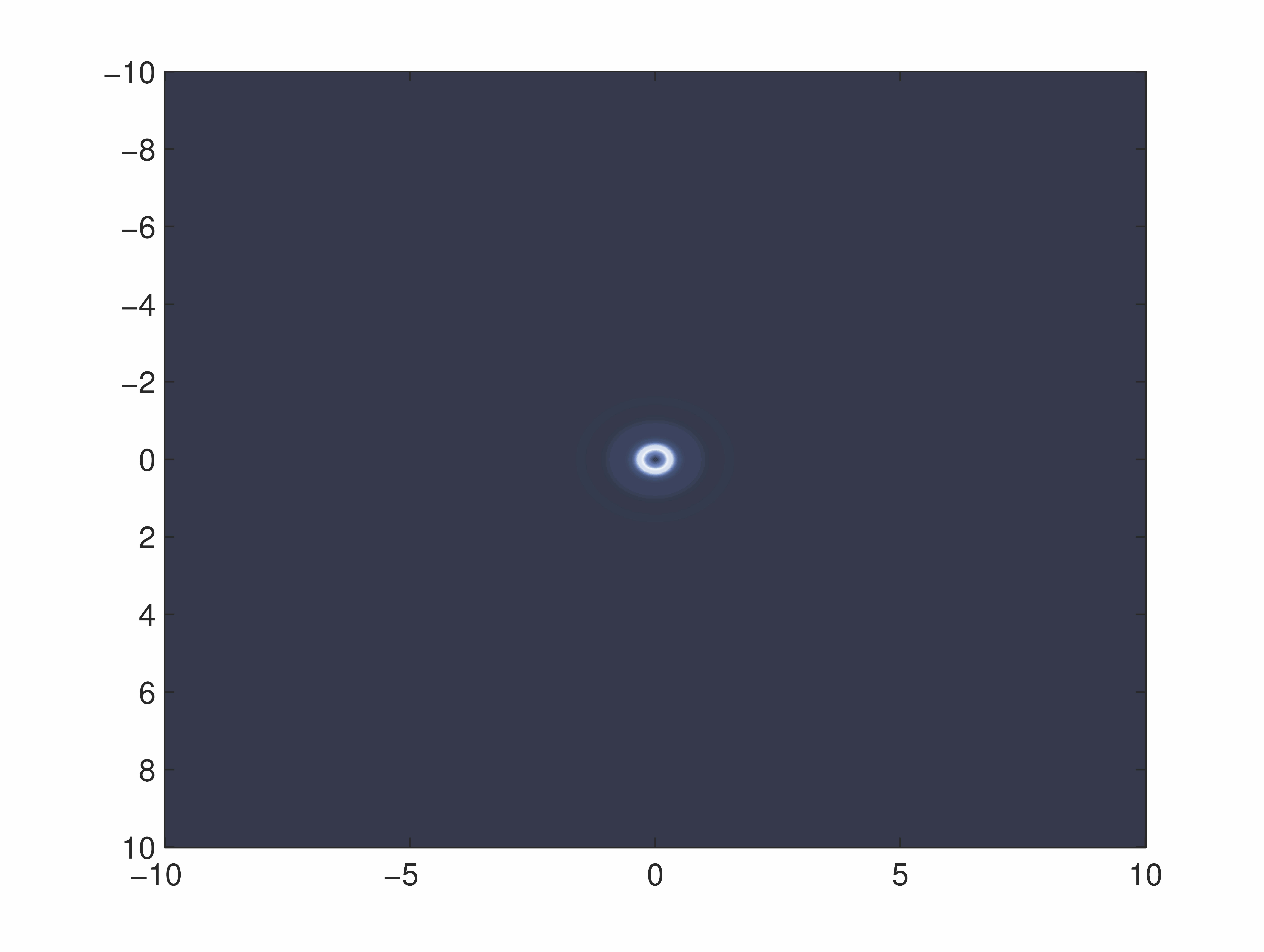}\label{Tss}}
\\
\subfigure[{$\Psi_{P,P}$}]{\includegraphics[width=0.45\textwidth]{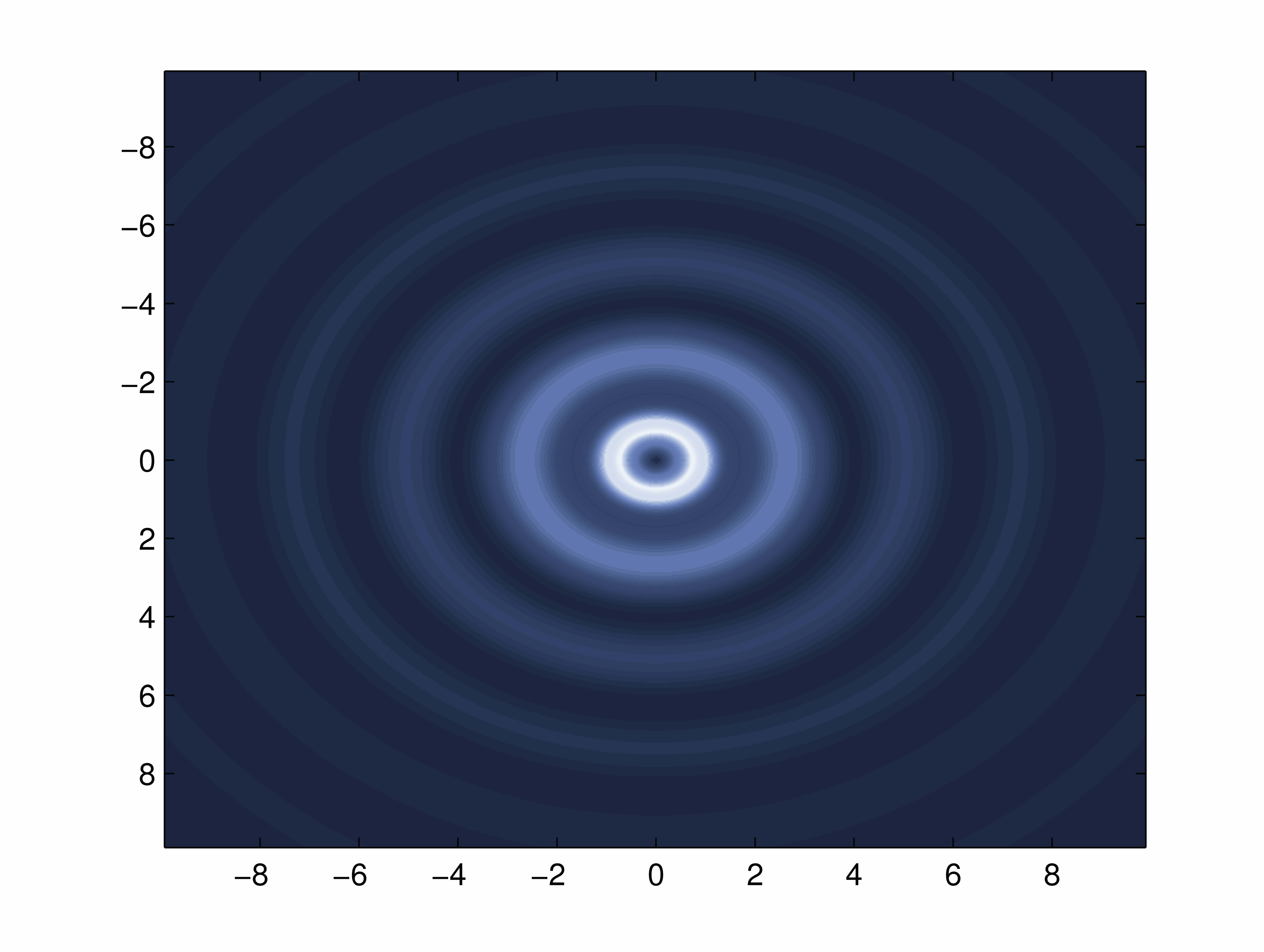}\label{Spp} }
\hspace{0.5cm}
\subfigure[{$\Psi_{S,S}$}]{\includegraphics[width=0.45\textwidth]{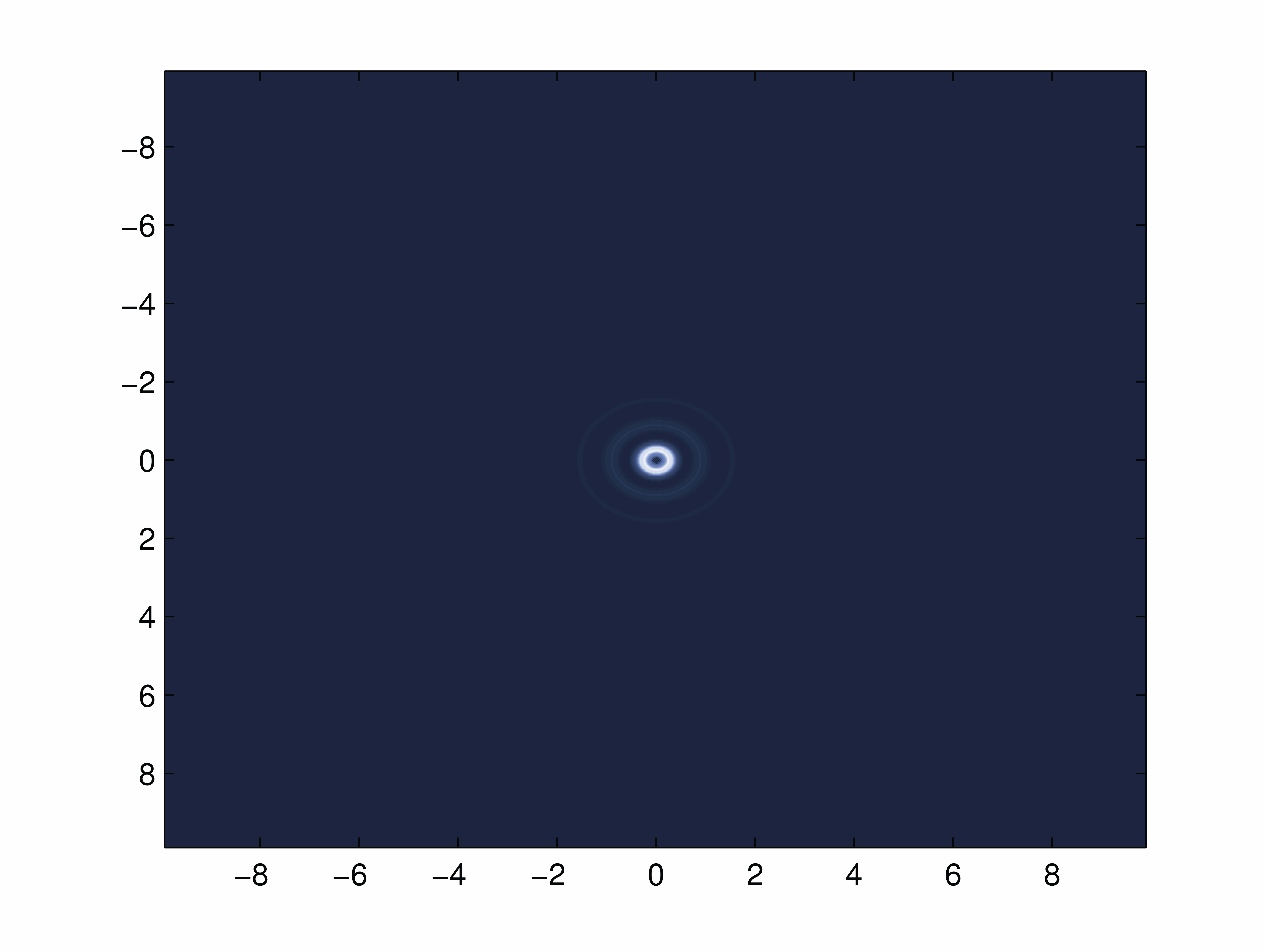}\label{Sss}}
\caption{Focal spots of the functions $\Phi_{\alpha,\alpha}$ and $\Psi_{\alpha,\alpha}$ for $\alpha=P$ or $S$, and  $\lambda_0=10kPa$, $\mu_0=1kPa$, and $\rho_0=1000kg/m^3$.}
\label{Fig1}
\end{center}
\end{figure}

In rest of this article, the statistical stability of the multi-short weighted location indicator is discussed with respect to the measurement and medium noises.

\section{Statistical stability with measurement noise}\label{s:measurement}

Let us analyze the stability of  weighted location indicator $\mathcal{I}^n_{\rm W}$ with respect to an additive measurement noise. It is assumed that the measurements of the far-field amplitudes are corrupted by a circular zero mean Gaussian noise $\bxi:\mathbb{S}^{d-1}\to\CC^d$, i.e., 
\begin{align}
\label{NoiseModel}
\bu^\infty_{\epsilon}(\hbx):=\widetilde{\bu^\infty_\epsilon}(\hbx)+\bxi(\hbx), \qquad \hbx\in\mathbb{S}^{d-1}, 
\end{align}
where $\bu^\infty_\epsilon$ represents the corrupted total far-field pattern and $ \widetilde{\bu^\infty_\epsilon}(\hbx)$ indicates the true data without noise corruption. Let $\sigma^2_\bxi$ be the noise covariance of $\bxi$ such that for all $\hbz,\hbz'\in\mathbb{S}^{d-1}$, 
\begin{align}
\label{NoiseExp}
\mathbb{E}\left[\bxi(\hbz)\otimes\overline{\bxi(\hbz')}\right] = \sigma_\bxi ^2\delta_{\hbz}(\hbz')\I_d
\quad\text{and}\quad
\mathbb{E}\left[\bxi^k(\hbz)\otimes\overline{\bxi^{k'}(\hbz)}\right] = \sigma_\bxi^2 \delta_{kk'}\delta_{\hbz}(\hbz')\I_d,
\end{align}
where $k$ and $k'$, for $k,k'\in\{1,\cdots,n\}$, indicate the $k-$th and $k'-$th measurements and $\mathbb{E}$ denotes the expectation with respect to the statistics of noise $\bxi$. By the expectations \eqref{NoiseExp}, the following  assumptions are made. 
\begin{enumerate}
\item The noise  at two different points $\hbz\in\mathbb{S}^{d-1}$ and $\hbz'\in\mathbb{S}^{d-1}$ is uncorrelated if $\hbz\neq\hbz'$. 
\item The noise $\bxi(\hbz)$, for all $\hbz\in\mathbb{S}^{d-1}$, has uncorrelated real and imaginary parts. 
\item The individual components of the noise are uncorrelated. 
\item The noises for different measurements, i.e., $\bxi^k$ and $\bxi^{k'}$ for $k-$th and $k'-$th measurements are uncorrelated whenever $k\neq k'$. However, all measurements have identical noise covariance $\sigma_\bxi^2$. 
\end{enumerate}
Henceforth, the  P-part and S-part of $\bxi$  are denoted by $\bxi_P$ and $\bxi_S$, respectively.

It is evident that the noise affects the weighted location indicator  $\mathcal{I}_{\rm W}^n$ through the back-propagator $\mathcal{H}[\bu^\infty_{\epsilon,\alpha}]$. In fact, by linearity of the Herglotz operator,  
\begin{align*}
\mathcal{H}\left[\bu^\infty_{\epsilon,\alpha}\right](\hbz)= \mathcal{H}\left[\widetilde{\bu^\infty_{\epsilon,\alpha}}\right](\hbz)+\mathcal{H}\left[\bxi_\alpha\right](\hbz),
\end{align*}
wherein the first term on the RHS is independent of the noise and yields the true maximum of location indicator $\mathcal{I}_{\rm W}$ as discussed in the noise-free case. On the other hand, the second term on the RHS, $\mathcal{H}\left[\bxi_\alpha\right]$, is a circular Gaussian random process with mean-zero by linearity and by the definition of $\bxi$. In order to fathom the role of measurement noise on the localization capabilities of $\mathcal{I}_{\rm W}^n$ and for ensuing analysis, the covariance of  $\mathcal{H}\left[\bxi_\alpha\right](\hbz)$ is required. Towards this end, the following result holds. 
\begin{lem}\label{LemHcov}
For all points $\bz,\bz'\in\RR^d$,  
\begin{align*}
\mathbb{E}\left[\mathcal{H}\left[\bxi_\alpha\right](\hbz)\otimes \overline{\mathcal{H}\left[\bxi_\alpha\right](\hbz')}\right]
=4\rho_0\sigma_\bxi^2c_\alpha^2\left(\frac{\pi}{\K_\alpha}\right)^{d-2}\Im\left\{\bGam^\omega_{\alpha}(\bz,\bz')\right\}.
\end{align*}
\end{lem}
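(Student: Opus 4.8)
The plan is to expand $\mathcal{H}[\bxi_\alpha]$ using its definition as a Herglotz integral against the plane–wave kernel, then compute the tensor product and take the expectation, pushing $\mathbb{E}$ inside the double integral and invoking the $\delta$–correlation of the noise from \eqref{NoiseExp}. Finally I would recognize the resulting single integral over $\mathbb{S}^{d-1}$ as one of the averaged identities of Proposition~\ref{Prop:identities}, specifically \eqref{identity4} or \eqref{identity6}, to identify $\Im\{\bGam^\omega_\alpha(\bz,\bz')\}$.

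More concretely, recall that since $\bxi_\alpha = \Pk_\alpha[\bxi]$, the Herglotz field is
\begin{align*}
\mathcal{H}[\bxi_\alpha](\hbz) = \int_{\mathbb{S}^{d-1}} e^{i\K_\alpha\hbz\cdot\hby}\,\bxi_\alpha(\hby)\,d\sigma(\hby).
\end{align*}
First I would form the tensor product with the conjugate of the same expression evaluated at $\hbz'$, obtaining a double integral over $\mathbb{S}^{d-1}\times\mathbb{S}^{d-1}$ of $e^{i\K_\alpha(\hbz\cdot\hby - \hbz'\cdot\hby')}\,\bxi_\alpha(\hby)\otimes\overline{\bxi_\alpha(\hby')}$. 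Taking $\mathbb{E}$ and using \eqref{NoiseExp} in the form $\mathbb{E}[\bxi(\hby)\otimes\overline{\bxi(\hby')}] = \sigma_\bxi^2\delta_{\hby}(\hby')\I_d$, the $\delta$ collapses the double integral to a single one; one must be slightly careful that the projections $\Pk_\alpha$ commute appropriately with the expectation, so that the relevant covariance kernel of $\bxi_\alpha$ is $\sigma_\bxi^2\delta_{\hby}(\hby')$ times the projector onto the $\alpha$–subspace at $\hby$ (i.e.\ $\hby\otimes\hby$ for $\alpha=P$, and $\I_d-\hby\otimes\hby$ for $\alpha=S$). This yields
\begin{align*}
\mathbb{E}\left[\mathcal{H}[\bxi_\alpha](\hbz)\otimes\overline{\mathcal{H}[\bxi_\alpha](\hbz')}\right]
= \sigma_\bxi^2\int_{\mathbb{S}^{d-1}} e^{i\K_\alpha(\bz-\bz')\cdot\hby}\,\big(\text{$\alpha$–projector at }\hby\big)\,d\sigma(\hby).
\end{align*}
For $\alpha=P$ the integrand carries $\hby\otimes\hby$ and for $\alpha=S$ it carries $\I_d-\hby\otimes\hby$; these are exactly the angular weights appearing in the discrete sums \eqref{identity4} and \eqref{identity6}. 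Passing from the discrete average $\frac1n\sum_j$ to the continuous integral $\frac{1}{|\mathbb{S}^{d-1}|}\int_{\mathbb{S}^{d-1}}$ (valid in the large–$n$ limit that underlies those identities, and here exact since we integrate directly), one reads off $\int_{\mathbb{S}^{d-1}} e^{i\K_\alpha(\bz-\bz')\cdot\hby}\,(\cdots)\,d\sigma(\hby) = 4\rho_0 c_\alpha^2(\pi/\K_\alpha)^{d-2}\Im\{\bGam^\omega_\alpha(\bz,\bz')\}$, giving the claimed identity.

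The main obstacle I expect is the bookkeeping around the projections: one has to justify that $\Pk_\alpha$ acting on the random field $\bxi$ produces a field whose pointwise covariance is the original white noise covariance restricted to the $\alpha$–subspace at each $\hby$, without introducing cross–point correlations (it does not, because $\Pk_P$ and $\Pk_S$ act as the pointwise multipliers $\hby\otimes\hby$ and $\I_d-\hby\otimes\hby$ on $L^2_P(\mathbb{S}^{d-1})\oplus L^2_S(\mathbb{S}^{d-1})$). Everything else is a routine interchange of expectation and integration together with direct application of Proposition~\ref{Prop:identities}; in particular the factor $4\rho_0 c_\alpha^2(\pi/\K_\alpha)^{d-2}$ comes verbatim from \eqref{identity4}–\eqref{identity6}, and no new estimates are needed.
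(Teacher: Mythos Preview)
Your proposal is correct and follows essentially the same route as the paper's proof. The only cosmetic difference is that the paper invokes the scalar integral identity \eqref{identity3} and then applies the differential operators $-\K_P^{-2}\nabla_\bz\otimes\nabla_\bz$ (for $\alpha=P$) or $\I_d+\K_S^{-2}\nabla_\bz\otimes\nabla_\bz$ (for $\alpha=S$) together with the definitions of $\bGam^\omega_P$, $\bGam^\omega_S$, whereas you cite the matrix-valued identities \eqref{identity4}--\eqref{identity6} directly; since the continuous-integral versions of \eqref{identity4}--\eqref{identity6} are exactly what is obtained from \eqref{identity3} in Appendix~\ref{A:Identities}, the two arguments coincide.
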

\begin{proof}
By definition of the Herglotz fields, 
\begin{align}
\mathbb{E}\left[\mathcal{H}\left[\bxi_\alpha\right](\hbz)\otimes \overline{\mathcal{H}\left[\bxi_\alpha\right](\hbz')}\right]=\mathbb{E}\left[\iint_{\mathbb{S}^{d-1}\times\mathbb{S}^{d-1}}\bxi_\alpha(\hbx)\otimes\overline{\bxi_\alpha(\hby)} e^{i\K_\alpha\hbx\cdot\bz}e^{-i\K_\alpha\hby\cdot{\bz'}}d\sigma(\hbx) d\sigma(\hby)\right].
\label{intermid3}
\end{align}
Recall that 
\begin{align}
\left(\I_d-\hbx\otimes\hbx\right)\bxi(\hbx)=\bxi_S(\hbx)\quad\text{and}\quad(\hbx\otimes\hbx)\bxi(\hbx)=\bxi_P(\hbx).
\label{identity9}
\end{align}
Then, for $\alpha=P$, Eqs. \eqref{intermid3} and \eqref{identity9} furnish
\begin{align*}
\mathbb{E}\left[\mathcal{H}\left[\bxi_P\right](\hbz)\otimes \overline{\mathcal{H}\left[\bxi_P\right](\hbz')}\right]
=\iint_{\mathbb{S}^{d-1}\times\mathbb{S}^{d-1}}(\hbx\otimes\hbx) \mathbb{E}\left[\bxi(\hbx)\otimes\overline{\bxi(\hby)}\right](\hby\otimes\hby)
 e^{i\K_P\hbx\cdot\bz}e^{-i\K_P\hby\cdot{\bz'}}d\sigma(\hbx) d\sigma(\hby).
\end{align*}
By virtue of \eqref{NoiseModel}, it can be seen that
\begin{align*}
\mathbb{E}\left[\mathcal{H}\left[\bxi_P\right](\hbz)\otimes \overline{\mathcal{H}\left[\bxi_P\right](\hbz')}\right]
=&\sigma^2_\bxi\int_{\mathbb{S}^{d-1}}(\hbx\otimes\hbx) (\hbx\otimes\hbx)
 e^{i\K_P\hbx\cdot(\bz-{\bz'})}d\sigma(\hbx)
\\
=&\sigma^2_\bxi\int_{\mathbb{S}^{d-1}}(\hbx\otimes\hbx)  
 e^{i\K_P\hbx\cdot(\bz-{\bz'})}d\sigma(\hbx)
 \\
=&-\frac{\sigma^2_\bxi}{\K_P^2}\nabla_\bz\otimes\nabla_\bz\left(\int_{\mathbb{S}^{d-1}}
 e^{i\K_P\hbx\cdot(\bz-{\bz'})}d\sigma(\hbx)\right).
\end{align*}
Therefore, by identity \eqref{identity3} and the definition of $\bGam_P^\omega$, 
\begin{align*}
\mathbb{E}\left[\mathcal{H}\left[\bxi_P\right](\hbz)\otimes \overline{\mathcal{H}\left[\bxi_P\right](\hbz')}\right]
=&
-\frac{4\sigma^2_\bxi}{\K_P^2}\left(\frac{\pi}{\K_P}\right)^{d-2}\nabla_\bz\otimes\nabla_\bz\Im\left\{g^\omega_P(\bz-\bz')\right\}
=
4\rho_0c_P^2\sigma^2_\bxi\left(\frac{\pi}{\K_P}\right)^{d-2}\Im\left\{\bGam^\omega_P(\bz,\bz')\right\},
\end{align*}
which is the required expression for  $\alpha=P$.  Similarly, 
\begin{align*}
\mathbb{E}&\left[\mathcal{H}\left[\bxi_S\right](\hbz)\otimes \overline{\mathcal{H}\left[\bxi_S\right](\hbz')}\right]
=\iint_{\mathbb{S}^{d-1}\times\mathbb{S}^{d-1}}(\I_d-\hbx\otimes\hbx) \mathbb{E}\left[\bxi(\hbx)\otimes\overline{\bxi(\hby)}\right](\I_d-\hby\otimes\hby)
 e^{i\K_S\hbx\cdot\bz}e^{-i\K_S\hby\cdot{\bz'}}d\sigma(\hbx) d\sigma(\hby).
\end{align*}
Thanks to Eq. \eqref{NoiseModel}, 
\begin{align*}
\mathbb{E}\left[\mathcal{H}\left[\bxi_S\right](\hbz)\otimes \overline{\mathcal{H}\left[\bxi_S\right](\hbz')}\right]
=&\sigma^2_\bxi\int_{\mathbb{S}^{d-1}}(\I_d-\hbx\otimes\hbx) (\I_d-\hbx\otimes\hbx)
 e^{i\K_S\hbx\cdot(\bz-{\bz'})}d\sigma(\hbx)
\\
=&\sigma^2_\bxi\int_{\mathbb{S}^{d-1}}(\I_d-\hbx\otimes\hbx)  
 e^{i\K_S\hbx\cdot(\bz-{\bz'})}d\sigma(\hbx)
 \\
=&\sigma^2_\bxi\left(\I_d+\frac{1}{\K_S^2}\nabla_\bz\otimes\nabla_\bz\right)\left(\int_{\mathbb{S}^{d-1}}
 e^{i\K_S\hbx\cdot(\bz-{\bz'})}d\sigma(\hbx)\right).
\end{align*}
Finally, using  identity \eqref{identity3} again and the definition of $\bGam_S^\omega$, one arrives at
\begin{align*}
\mathbb{E}\left[\mathcal{H}\left[\bxi_S\right](\hbz)\otimes \overline{\mathcal{H}\left[\bxi_S\right](\hbz')}\right]
=&
4\sigma^2_\bxi\left(\frac{\pi}{\K_S}\right)^{d-2}\left(\I_d+\frac{1}{\K_S^2}\nabla_\bz\otimes\nabla_\bz\right)\Im\left\{g^\omega_S(\bz-\bz')\right\}
=
4\rho_0c_S^2\sigma^2_\bxi\left(\frac{\pi}{\K_S}\right)^{d-2}\Im\left\{\bGam^\omega_S(\bz,\bz')\right\}.
\end{align*}
This completes the proof. 
\end{proof}

Lemma \ref{LemHcov} shows that the field $\mathcal{H}\left[\bxi_\alpha\right](\hbz)$, for both $\alpha=P$ and $S$, precipitates a speckle pattern. More specifically, it generates a random cloud of hot spots with typical diameters of the order of wavelengths of the respective wave-modes and amplitudes of the order of $\sigma_\bxi/\sqrt{\K_\alpha}$. 

Let us now assess the stability features of the weighted location indicator, and consider the cases of density and elasticity contrasts separately. 

\subsection{Density contrast}\label{s:DensityMeasurment}

In this case, $\mathcal{I}_{\rm W}[\bu^{\rm inc}]$ and  $\mathcal{I}_{\rm W}^n$ are given by expressions \eqref{IW-DC} and \eqref{IWn}, respectively. The cases of incident plane P- and S-waves are considered separately. For P-waves, one can calculate the covariance of the weighted indicator as
\begin{align*}
{\rm Cov}\left(\mathcal{I}_{\rm W}^n(\bz;P),\mathcal{I}_{\rm W}^n(\bz';P)\right)
=&
c_P^2\omega^4\widetilde{\tau}^2
\frac{1}{n^2}\sum_{j,j'=1}^n\mathbb{E}\Bigg[\Re\left\{
a^P_d \overline{\mathcal{H}\left[\bxi^j_P\right](\bz)}\cdot\bu^P_j(\bz)\right\}
\Re\left\{
a^P_d \overline{\mathcal{H}\left[\bxi^{j'}_P\right](\bz')}\cdot\bu^P_{j'}(\bz')\right\}
\Bigg]
\\
=&
c_P^2\omega^4\widetilde{\tau}^2|a^P_d|^2
\frac{1}{2n^2}\sum_{j=1}^n\Re\left\{\bu^P_j(\bz)\cdot\mathbb{E}
\Big[\mathcal{H}\left[\bxi^j_P\right](\bz)\otimes
\overline{\mathcal{H}\left[\bxi^{j}_P\right](\bz')}\Big]\overline{\bu^P_{j}(\bz')}
\right\}, \quad\bz,\bz'\in\RR^d.
\end{align*} 
where $\widetilde{\tau}$ is defined as  $\widetilde{\tau}:=(\rho_0-\rho_2)| B_S|$. 
Using Lemma \ref{LemHcov} and  identity \eqref{identity4}, on obtains 
\begin{align}
{\rm Cov}\left(\mathcal{I}_{\rm W}^n(\bz;P),\mathcal{I}_{\rm W}^n(\bz;P)\right)
=&
\sigma^2_\bxi c_P\omega^3\widetilde{\tau}^2\gamma_d
\frac{1}{2n^2}\sum_{j=1}^n \Re\left\{\bu^P_j(\bz)\cdot\Im\left\{\bGam^\omega_P(\bz,\bz')\right\}\overline{\bu^P_{j}(\bz')}
\right\}
\nonumber
\\
=&
\sigma^2_\bxi c_P\omega^3\widetilde{\tau}^2\gamma_d
\frac{1}{2n^2}\sum_{j=1}^n \Re\left\{\Im\left\{\bGam^\omega_P(\bz,\bz')\right\}
 :\btheta_j\otimes\btheta_j e^{i\K_P\btheta_j\cdot(\bz-\bz')}
\right\}
\nonumber
\\
\approx&
\frac{2\rho_0\sigma^2_\bxi c_P^3\omega^3\widetilde{\tau}^2\gamma_d}{n}\left(\frac{\pi}{\K_P}\right)^{d-2}
 \left\|\Im\left\{\bGam^\omega_P(\bz,\bz')\right\}\right\|^2.
 \label{eq:43b}
\end{align} 
Similarly, for incident plane S-waves, the covariance of the weighted location indicator is given by
\begin{align*}
{\rm Cov}\left(\mathcal{I}_{\rm W}^n(\bz;S),\mathcal{I}_{\rm W}^n(\bz';S)\right)
=&
c_S^2\omega^4\widetilde{\tau}^2|a^S_d|^2
\frac{1}{n^2}\sum_{j,j'=1}^n\sum_{\ell,\ell'=1}^{d-1}\mathbb{E}\Bigg[\Re\left\{
\overline{\mathcal{H}\left[\bxi^{j,\ell}_S\right](\bz)}\cdot\bu^S_{\ell, j}(\bz)\right\}
\Re\left\{
\overline{\mathcal{H}\left[\bxi^{j',\ell'}_S\right](\bz')}\cdot\bu^S_{j',\ell}(\bz')\right\}
\Bigg]
\\
=&
c_S^2\omega^4\widetilde{\tau}^2|a^S_d|^2
\frac{1}{2n^2}\sum_{j=1}^n\sum_{\ell=1}^{d-1}\Re\left\{\bu^S_{j,\ell}(\bz)\cdot\mathbb{E}
\Big[\mathcal{H}\left[\bxi^{j,\ell}_S\right](\bz)\otimes
\overline{\mathcal{H}\left[\bxi^{j,\ell}_S\right](\bz')}\Big]\overline{\bu^S_{j,\ell}(\bz')}
\right\}, \bz,\bz'\in\RR^d.
\end{align*} 
Again, using Lemma \ref{LemHcov} and identity \eqref{identity6}, one can see that
\begin{align}
{\rm Cov}\left(\mathcal{I}_{\rm W}^n(\bz;S),\mathcal{I}_{\rm W}^n(\bz;S)\right)
=&
\sigma^2_\bxi c_S\omega^3\widetilde{\tau}^2\gamma_d
\frac{1}{2n^2}\sum_{j=1}^n \sum_{\ell=1}^{d-1}\Re\left\{\bu^S_{j,\ell|}(\bz)\cdot\Im\left\{\bGam^\omega_S(\bz,\bz')\right\}\overline{\bu^S_{j,\ell}(\bz')}
\right\}
\nonumber
\\
=&
\sigma^2_\bxi c_S\omega^3\widetilde{\tau}^2\gamma_d
\frac{1}{2n^2}\sum_{j=1}^n\sum_{\ell=1}^{d-1} \Re\left\{\Im\left\{\bGam^\omega_S(\bz,\bz')\right\}
 :\btheta_j^{\perp,\ell}\otimes\btheta_j^{\perp,\ell} e^{i\K_S\btheta_j\cdot(\bz-\bz')}
\right\}
\nonumber
\\
\approx&
\frac{2\rho_0\sigma^2_\bxi c_S^3\omega^3\widetilde{\tau}^2\gamma_d}{n}\left(\frac{\pi}{\K_S}\right)^{d-2}\left\|\Im\left\{\bGam^\omega_S(\bz,\bz')\right\}\right\|^2.
\label{eq:43c}
\end{align} 

The following remarks are in order. The covariance of weighted location indicator $\mathcal{I}_{\rm W}^n$ has the same form as expression \eqref{Eq1:LemW} in Lemma \ref{LemW} except for a modulation by $C\sigma_\bxi^2/n$, where $C$ is a positive constant independent of $n$ and $\sigma_\bxi$. This elucidates that the shapes of the hot spots in the speckle field are identical to the shape of $\bz_S\mapsto\mathcal{I}_{\rm W}^n(\bz_S,\alpha)$ in the noise-free case and their amplitudes are $\sqrt{C}\sigma_\bxi/\sqrt{n}$ times that  of the main peak of $\mathcal{I}_{\rm W}^n$. However, the shape or the amplitude of  $\mathcal{I}_{\rm W}^n$ are not affected by the measurement noise.  Therefore, despite a cloud of hot spots surrounding its true peak, the weighted location indicator is expected to perform effectively with noisy data contaminated by the measurement noise.  Moreover, the dependence of the covariance of the speckle field on  $1/{n}$ suggests that the stability of the weighted location indicator increases if the number of the incident fields is increased. 

Let us now provide signal-to-noise ratio (SNR) estimates for $\mathcal{I}^n_{\rm W}$ with respect to the measurement noise in the density contrast case. Towards this end,  it can be easily seen from \eqref{eq:43b}-\eqref{eq:43c} that the variance of  $\mathcal{I}^n_{\rm W}$ is given by 
\begin{align}
\label{VarDensity}
{\rm Var}\left(\mathcal{I}^n_{\rm W}(\bz;\alpha)\right)\approx \frac{2\rho_0\sigma^2_\bxi c_\alpha^3\omega^3\widetilde{\tau}\gamma_d}{n}\left(\frac{\pi}{\K_\alpha}\right)^{d-2}\left\|\Im\left\{\bGam^\omega_\alpha(\bz,\bz)\right\}\right\|^2.
\end{align}
Consequently, thanks to  Eq. \eqref{VarDensity} and Lemma \ref{LemW}, the SNR is given by  
\begin{align*}
{\rm SNR}:=\frac{\mathbb{E}\left(\mathcal{I}^n_{\rm W}(\bz;\alpha)\right)}{\sqrt{{\rm Var}\left(\mathcal{I}^n_{\rm W}(\bz;\alpha)\right)}} 
\approx\epsilon^d|B_D| |\rho_0-\rho_1|\frac{\sqrt{n}}{\sigma_\bxi}\sqrt{2^{4-d}\pi^{d-3}c_\alpha^{d-1}\omega^{5-d}}\left\|\Im\left\{\bGam^\omega_\alpha(\bz,\bz)\right\}\right\|.
\end{align*}
It is evident that the SNR does not only depend on the noise covariance and the number of illuminations but also on the volume and contrast of the inclusion.

\subsection{Elasticity contrast}

In this case, $\mathcal{I}_{\rm W}[\bu^{\rm inc}]$ and  $\mathcal{I}_{\rm W}^n$ are given by Eqs. \eqref{IW-EC} and \eqref{IWn}, respectively. Therefore, for incident P-waves, the covariance of the weighted location indicator, for all $\bz,\bz'\in\RR^d$, is given by
\begin{align}
{\rm Cov}&\left(\mathcal{I}_{\rm W}^n(\bz;P),\mathcal{I}_{\rm W}^n(\bz';P)\right)
\nonumber
\\
=&
c_P^2
\frac{1}{n^2}\sum_{j,j'=1}^n\mathbb{E}\Bigg[\Re\left\{
a^P_d \overline{\nabla\Pk_P\left[\mathcal{H}\left[\bxi^j_P\right]\right](\bz)}:\MM(B_S):\nabla\bu^P_j(\bz)\right\}
\Re\left\{
a^P_d \overline{\Pk_P\left[\mathcal{H}\left[\bxi^{j'}_P\right]\right](\bz')}:\MM(B_S):\nabla\bu^P_{j'}(\bz')\right\}
\Bigg].
\nonumber
\end{align}
Simple manipulations show that
\begin{align}
{\rm Cov}&\left(\mathcal{I}_{\rm W}^n(\bz;P),\mathcal{I}_{\rm W}^n(\bz';P)\right)
\nonumber
\\
=&
c_P^2|a^P_d|^2
\frac{1}{2n^2}\sum_{j,j'=1}^n\Re\Bigg\{
\mathbb{E}
\Bigg[
\left( \overline{\nabla\Pk_P\left[\mathcal{H}\left[\bxi^j_P\right]\right](\bz)}:\MM(B_S):\nabla\bu^P_j(\bz)\right) 
\left(\Pk_P\left[\mathcal{H}\left[\bxi^{j'}_P\right]\right](\bz'):\MM(B_S):\overline{\nabla\bu^P_{j'}(\bz')}\right)
\Bigg]
\Bigg\}
\nonumber
\\
=&
c_P^2 |a^P_d|^2
\frac{1}{2n^2}\sum_{j,j'=1}^n\Re\Bigg\{\nabla\bu^P_j(\bz):\MM(B_S)
:
\left(\mathbb{E}
\left[\nabla\Pk_P\left[\mathcal{H}\left[\bxi^j_P\right]\right](\bz)\otimes
\overline{\nabla\Pk_P\left[\mathcal{H}\left[\bxi^{j'}_P\right]\right](\bz')}\right]:\MM(B_S):\overline{\nabla\bu^P_{j'}(\bz')}\right)
\Bigg\}.
\label{intermid4}
\end{align} 
Remark that, by the same arguments as in Lemma \ref{LemHcov},  
\begin{align}
\label{NablaHcov}
\mathbb{E}
\left[\nabla\Pk_\alpha\left[\mathcal{H}\left[\bxi^j_\alpha\right]\right](\bz)\otimes
\overline{\nabla\Pk_\alpha\left[\mathcal{H}\left[\bxi^{j'}_\alpha\right]\right](\bz')}\right]
= -4\rho_0 c_\alpha^2\sigma_\bxi^2\left(\frac{\pi}{\K_\alpha}\right)^{d-2}\nabla^2_{\bz}\Im\left\{\bGam^\omega_\alpha(\bz,\bz')\right\}\delta_{j,j'}.
\end{align}
Therefore, using Eq. \eqref{NablaHcov} in  Eq. \eqref{intermid4}, one arrives at 
\begin{align}
{\rm Cov}\left(\mathcal{I}_{\rm W}^n(\bz;P),\mathcal{I}_{\rm W}^n(\bz';P)\right)
=&
-\frac{c_P\sigma_\bxi^2\gamma_d}{2\omega}
\frac{1}{n^2}\sum_{j=1}^n\Re\Bigg\{\nabla\bu^P_j(\bz):\MM(B_S)
:\left(\nabla^2_{\bz}\Im\left\{\bGam^\omega_\alpha(\bz,\bz')\right\}:\MM(B_S):\overline{\nabla\bu^P_{j}(\bz')}\right)
\Bigg\}.
\label{intermid5}
\end{align} 
Recall that the term 
\begin{align*}
\frac{1}{n}\sum_{j=1}^n\Re\Bigg\{\nabla\bu^P_j(\bz):\MM(B_S)
:
\left(\nabla^2_{\bz}\Im\left\{\bGam^\omega_\alpha(\bz,\bz')\right\}:\MM(B_S):\overline{\nabla\bu^P_{j}(\bz')}\right)
\Bigg\},
\end{align*}
in Eq. \eqref{intermid5} is of the same form as Eq. \eqref{eq:33b} analyzed component-wise in Lemma \ref{LemTD-elasticity}. Therefore, following the same arguments, one can arrive at the expression for the covariance after fairly easy calculations as 
\begin{align*}
{\rm Cov}&\left(\mathcal{I}_{\rm W}^n(\bz;P),\mathcal{I}_{\rm W}^n(\bz';P)\right)
\approx
\frac{2\rho_0c_P^3\sigma_\bxi^2 \gamma_d }{\omega}\left(\frac{\pi}{\K_P}\right)^{d-2}
\frac{1}{n}\left(\MM(B_S)\bullet\Im\left\{\nabla_{\bz}^2{\bGam}^\omega_P(\bz,\bz')\right\}\right)
:
\left(\MM(B_S)\bullet\Im\left\{\nabla_{\bz}^2{\bGam}^\omega_P(\bz,\bz')\right\}\right)^\top.
\end{align*} 
Similarly, for incident plane S-waves, one finds out that the covariance of the weighted location indicator is 
\begin{align*}
{\rm Cov}&\left(\mathcal{I}_{\rm W}^n(\bz;S),\mathcal{I}_{\rm W}^n(\bz';S)\right)
\approx
\frac{2\rho_0c_S^3\sigma_\bxi^2 \gamma_d }{\omega}\left(\frac{\pi}{\K_S}\right)^{d-2}
\frac{1}{n}\left(\MM(B_S)\bullet\Im\left\{\nabla_{\bz}^2{\bGam}^\omega_S(\bz,\bz')\right\}\right)
:
\left(\MM(B_S)\bullet\Im\left\{\nabla_{\bz}^2{\bGam}^\omega_S(\bz,\bz')\right\}\right)^\top.
\end{align*} 
For analysis sake, let us assume that $\MM(B_S)=\MM(B_D)$. Then, 
\begin{align}
{\rm Cov}&\left(\mathcal{I}_{\rm W}^n(\bz;\alpha),\mathcal{I}_{\rm W}^n(\bz';\alpha)\right)
\approx
\frac{2\rho_0c_\alpha^3\sigma_\bxi^2 \gamma_d }{\omega}\left(\frac{\pi}{\K_\alpha}\right)^{d-2}
\frac{1}{n}\Psi_{\alpha,\alpha}(\bz,\bz').
\label{VarElasticity}
\end{align} 
Once again, as discussed in the density contrast case, the covariance of the speckle field has the same expression as in Eq. \eqref{Eq2:LemW} in Lemma \ref{LemW} up to a scaling by $\widetilde{C}\sigma_\bxi^2/n$, where $\widetilde{C}$ is independent of $\sigma_\xi$ and $n$. Therefore, the measurement noise has same effects on the performance of the weighted location indicator in elasticity contrast case as discussed in Section \ref{s:DensityMeasurment} for the density contrast. Further, if $\MM(B_S)=\MM(B_D)$ then the variance  of $\mathcal{I}^n_{\rm W}$ and the SNR, thanks to  Eq. \eqref{VarElasticity} and Lemma \ref{LemW}, are given by
\begin{align*}
&{\rm Var}\left(\mathcal{I}^n_{\rm W}(\bz;\alpha)\right)\approx\frac{2\rho_0c_S^3\sigma_\bxi^2 \gamma_d }{\omega}\left(\frac{\pi}{\K_S}\right)^{d-2}
\frac{1}{n} \Psi_{\alpha,\alpha}(\bz,\bz),
\\
&{\rm SNR} 
\approx\epsilon^d\frac{\sqrt{n}}{\sigma_\bxi}\sqrt{2^{4-d}\pi^{d-3}c_\alpha^{d-1}\omega^{1-d}}\sqrt{\Psi_{\alpha,\alpha}(\bz,\bz)}.
\end{align*}

\section{Statistical stability with random medium noise}\label{s:Medium}

Let us now investigate the stability of the weighted location indicator with respect to the medium noise, i.e., when the material parameters fluctuate around the reference material parameters. For brevity, it is supposed that either the density parameter is fluctuating around the reference value $\rho_0$ or the shear modulus is fluctuating around the reference value $\mu_0$. The general case, when density and both Lam\'e parameters fluctuate simultaneously, can be dealt with similarly but is much more intriguing and the calculations are tedious. 

\subsection{Randomly fluctuating density parameter}\label{ss:MediumDensity}

Let the density parameter of the elastic material loaded in the background medium $\RR^d$ be randomly fluctuating around the reference density parameter $\rho_0$, i.e.,
\begin{align*}
\rho(\bx):=\rho_0\left(1+\eta(\bx)\right), \qquad\bx\in\RR^d,
\end{align*}
where $\eta:\RR^d\to\RR$ is a (real valued) random fluctuation of typical size $\sigma_{\eta}$. It is assumed that the  fluctuations are weak, i.e., the size $\sigma_{\eta}$ is small enough in the sense that the Born approximation is justified and valid. 

Henceforth,  the quantities related to the random medium in the absence of any  inclusion will be termed as the \emph{background quantities} whereas those related to the medium without random fluctuations in the absence of any inclusion will be termed as the \emph{reference quantities}. The reference quantities will be marked by a superposed $0$. 

\subsubsection{Measurement model}
In order to understand the effects of random medium noise on the stability of  location indicator $\mathcal{I}^n_{\rm W}$, the contaminated data $\bu^\infty_\epsilon$ and the corrupted back-propagator $\mathcal{H}[\bu^\infty_{\epsilon,\alpha}]$ need to be elaborated. Toward this end, remark that the differential measurements of the far-field patterns are now characterized by 
\begin{align}
\bu^{\rm tot}(\bx)-\bu^{{\rm inc},0}(\bx)= \frac{e^{i\K_P|\bx|}}{|\bx|^{(d-1)/2}}\bu^{\infty}_{\epsilon,P}(\hbx)+ \frac{e^{i\K_S|\bx|}}{|\bx|^{(d-1)/2}}\bu^{\infty}_{\epsilon,S}(\hbx)+O\left(\frac{1}{|\bx|^{(d+1)/2}}\right),
\label{uScNoisy}
\end{align}
uniformly in all directions $\hbx\in\mathbb{S}^{d-1}$ as $\bx|\to +\infty$, wherein the reference field $\bu^{\rm inc, 0}$ is used. In fact, the background is fluctuating randomly and the differential measurements are with respect to the reference medium. It shows that a \emph{clutter noise} is induced by the random fluctuation $\eta$ which contaminates the far-field scattering patterns. In order to separate the original scattering signature of the inclusion from that of the clutter noise induced by $\eta$, the contaminated scattered measurements $\bu^{\rm tot}(\bx)-\bu^{{\rm inc},0}(\bx)$ are expressed as the sum of $\bu^{\rm tot}(\bx)-\bu^{{\rm inc}}(\bx)$ and $\bu^{\rm inc}(\bx)-\bu^{{\rm inc},0}(\bx)$. Subsequently, these quantities are  approximated in the Born approximation regime. It allows us to express $\bu^\infty_{\epsilon,\alpha}$  in terms of the reference far-field patterns $\bu^{\infty, 0}_{\epsilon,\alpha}$. 

Note that the incident field $\bu^{\rm inc}$ can be expressed under the Born approximation as 
\begin{align*}
\bu^{\rm inc}(\bx)=\bu^{\rm inc,0}(\bx)-\rho_0\omega^2\int_{\RR^d}\bGam^{\omega,0}(\bx,\by)\eta(\by)\bu^{\rm inc,0}(\by)d\by+o\left(\sigma_{\eta}\right).
\end{align*}
Thanks to asymptotic expansion \eqref{GamFar} in Proposition \ref{PropFar}, for $|\bx|\to+\infty$,
\begin{align*}
\bu^{\rm inc}(\bx)-\bu^{\rm inc,0}(\bx)
= &
-\rho_0\omega^2\Bigg[
a^P_d\frac{e^{i\K_P|\bx|}}{|\bx|^{(d-1)/2}}\int_{\RR^d}\left(\hbx\otimes\hbx \right)e^{-i\K_P\hbx\cdot\by}\eta(\by)\bu^{\rm inc,0}(\by)d\by
\nonumber
\\
&+
a^S_d\frac{e^{i\K_S|\bx|}}{|\bx|^{(d-1)/2}}\int_{\RR^d}\left(\I_d-\hbx\otimes\hbx \right)e^{-i\K_S\hbx\cdot\by}\eta(\by)\bu^{\rm inc,0}(\by)d\by
+o\left(\sigma_{\eta}\right)+O\left(\frac{1}{|\bx|^{(d+1)/2}}\right)
\Bigg]
\nonumber
\\
= &
-\rho_0\omega^2\Bigg[
a^P_d\frac{e^{i\K_P|\bx|}}{|\bx|^{(d-1)/2}}\int_{\RR^d}\left[\left(-\frac{1}{\K_P^2}\nabla\otimes\nabla\right)e^{-i\K_P\hbx\cdot\by}\right]\eta(\by)\bu^{\rm inc,0}(\by)d\by
\nonumber
\\
&+
a^S_d\frac{e^{i\K_S|\bx|}}{|\bx|^{(d-1)/2}}\int_{\RR^d}\left[\left(\I_d+\frac{1}{\K_S^2}\nabla\otimes\nabla\right)e^{-i\K_S\hbx\cdot\by}\right]\eta(\by)\bu^{\rm inc,0}(\by)d\by
+o\left(\sigma_{\eta}\right)+O\left(\frac{1}{|\bx|^{(d+1)/2}}\right)
\Bigg].
\end{align*}
Similarly, under the Born approximation
\begin{align}
\left(\bu^{\rm tot}(\bx)-\bu^{\rm inc}(\bx)\right)
=&
\left(\bu^{\rm tot,0}(\bx)-\bu^{\rm inc, 0}(\bx)\right)
-\rho_0\omega^2\int_{\RR^d}\bGam^{\omega,0}(\bx,\by)\eta(\by)\left(\bu^{\rm tot,0}(\by)-\bu^{\rm inc, 0}(\by)\right)d\by
+o\left(\sigma_{\eta}\right).
\label{intermid9}
\end{align}
It is interesting to note that the first term on the RHS of Eq. \eqref{intermid9} is exactly the scattered field in the reference medium, whereas the second term is of order $o(\omega^{d+2}\epsilon^d\sigma_{\eta})$ thanks to Corollary \ref{corAsymp}. Therefore, 
\begin{align*}
\left(\bu^{\rm tot}(\bx)-\bu^{\rm inc}(\bx)\right)
=&
\left(\bu^{\rm tot,0}(\bx)-\bu^{\rm inc, 0}(\bx)\right)
+o\left(\sigma_{\eta}\right)
+o\left(\omega^{d+2}\epsilon^d\sigma_{\eta}\right) .
\end{align*}
Consequently, when $|\bx|\to +\infty$, 
\begin{align}
\left(\bu^{\rm tot}(\bx)-\bu^{{\rm inc}}(\bx)\right)
=&\frac{e^{i\K_P|\bx|}}{|\bx|^{(d-1)/2}}\bu^{\infty,0}_{\epsilon,P}(\hbx)+ \frac{e^{i\K_S|\bx|}}{|\bx|^{(d-1)/2}}\bu^{\infty,0}_{\epsilon,S}(\hbx)
+o\left(\sigma_{\eta}\right)
+o\left(\omega^{d+2}\epsilon^d\sigma_{\eta}\right) 
+O\left(\frac{1}{|\bx|^{(d+1)/2}}\right).
\label{intermid10}
\end{align}
Finally, combining Eqs. \eqref{uScNoisy}, \eqref{intermid9}, and \eqref{intermid10}, one finds out that 
\begin{align}
\bu^{\infty}_{\epsilon,P}(\hbx)
=&
\bu^{\infty,0}_{\epsilon,P}(\hbx)
-\rho_0\omega^2 a^P_d\int_{\RR^d}\left[\frac{-1}{\K^2_P}\nabla\otimes\nabla\left(e^{-i\K_P\hbx\cdot\by}\right)\right]\eta(\by)\bu^{\rm inc, 0}(\by)d\by
+o\left(\sigma_{\eta}\right)
+o\left(\omega^{d+2}\epsilon^d\sigma_{\eta}\right) 
+O\left(\frac{1}{|\bx|^{(d+1)/2}}\right), \label{UpFarNoisy}
\\
\bu^{\infty}_{\epsilon,P}(\hbx)
=&
\bu^{\infty,0}_{\epsilon,S}(\hbx)
-\rho_0\omega^2 a^S_d\int_{\RR^d}\left[\I_d+\frac{1}{\K^2_S}\nabla\otimes\nabla\left(e^{-i\K_S\hbx\cdot\by}\right)\right]\eta(\by)\bu^{\rm inc, 0}(\by)d\by
+o\left(\sigma_{\eta}\right)
+o\left(\omega^{d+2}\epsilon^d\sigma_{\eta}\right) 
+O\left(\frac{1}{|\bx|^{(d+1)/2}}\right).\label{UsFarNoisy}
\end{align}

The \emph{noisy back-propagator} $\mathcal{H}[\bu^\infty_{\epsilon,\alpha}]$ can be calculated using the noisy far-field patterns \eqref{UpFarNoisy} and \eqref{UsFarNoisy}. After simple manipulations one finds out that, for all $\bz\in\RR^d$, 
\begin{align}
\mathcal{H}[\bu^\infty_{\epsilon,\alpha}](\bz)
= & \mathcal{H} [\bu^{\infty,0}_{\epsilon,\alpha}](\bz)
-4\rho_0^2c_\alpha^2\omega^2a^\alpha_d\left(\frac{\pi}{\K_\alpha}\right)^{d-2}
\int_{\RR^d}\Im\left\{\bGam^{\omega,0}_\alpha(\bz,\by)\right\}\eta(\by)\bu^{\rm inc,0}(\by)d\by
\nonumber
\\
&+o\left(\sigma_{\eta}\right)
+o\left(\omega^{d+2}\epsilon^d\sigma_{\eta}\right) 
+O\left(\frac{1}{|\bx|^{(d+1)/2}}\right).
\label{Hnoisy}
\end{align}

Notice that the first term on the RHS of Eq. \eqref{Hnoisy} is the back-propagator related entirely to the reference medium and will furnish the main peak of $\mathcal{I}^n_{\rm W}$ in the noise-free case under the assumption that the Born approximation is valid.  The second term on the RHS of Eq. \eqref{Hnoisy} is the error term  emerging from the clutter. This will generate a speckle field when supplied to the indicator $\mathcal{I}^n_{\rm W}$.  In order to facilitate the ensuing analysis, let us introduce error operator 
\begin{align*}
\mathcal{E}_\alpha\left[\bw\right](\bz):=-4\rho_0^2c_\alpha^2\omega^2a^\alpha_d\left(\frac{\pi}{\K_\alpha}\right)^{d-2}
\int_{\RR^d}\Im\left\{\bGam^{\omega,0}_\alpha(\bz,\by)\right\}\eta(\by)\bw(\by)d\by.
\end{align*}
For brevity, the shorthand notation $\mathcal{E}_\alpha^{P, j}$ will be used for the error associated to the plane P-wave incident field $\bu^{\rm P,0}_{j}$ and $\mathcal{E}_\alpha^{S, j,\ell}$ will be used for that associated to the plane S-wave $\bu^{\rm S,0}_{j,\ell}$.

\subsubsection{Speckle field analysis for density contrast}\label{sss:speckleDensity}

The back-propagators $\mathcal{E}_{\alpha}^{P,j}$ and  $\mathcal{E}_{\alpha}^{S,j,\ell}$, emerging in  Eq. \eqref{Hnoisy} due to the noise $\eta$, generate speckle fields which corrupt the true peaks of $\bz\mapsto \mathcal{I}_{\rm W}^n(\bz; P)$  and $\bz\mapsto \mathcal{I}_{\rm W}^n(\bz; S)$, respectively. To explain the effects of these speckle fields on the performance and stability of the location indicator, the covariances of these speckle fields is investigated below. Towards this end, the following result holds.
\begin{lem}\label{Lem5.1}
For all $\bz,\bz'\in\RR^d$, 
 \begin{align}
{\rm Cov}\left(\mathcal{I}_W^n(\bz;\alpha),\mathcal{I}_W^n(\bz';\alpha)\right)
\approx
8\rho_0^4c_\alpha^4\omega^6\gamma_d^2\widetilde{\tau}^2 \left(\frac{\pi}{\K_\alpha}\right)^{2d-4}\iint_{\RR^d\times \RR^d}\mathcal{C}_\eta(\by,\by') \Phi_{\alpha,\alpha}(\bz,\by)\Phi_{\alpha,\alpha}(\bz',\by')d\by d\by',\label{CV1}
\end{align}
where $\mathcal{C}_\eta$ is the two-point correlation of $\eta$ (i.e., $\mathcal{C}_\eta(\by,\by') :=\mathbb{E}\left[\eta(\by)\eta(\by')\right]$).
\end{lem}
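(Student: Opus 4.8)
The plan is to isolate the random ``clutter'' part of $\mathcal{I}_{\rm W}^n$, to recognize it as a mean-zero \emph{linear} functional of the fluctuation $\eta$, and then to read off its covariance from the two-point statistics of $\eta$. First I would substitute the noisy back-propagator \eqref{Hnoisy} into the density-contrast indicator \eqref{IWn}--\eqref{IW-DC}. This splits $\mathcal{H}[\bu^\infty_{\epsilon,\alpha}]$ into the \emph{reference} back-propagator $\mathcal{H}[\bu^{\infty,0}_{\epsilon,\alpha}]$, which reproduces exactly the noise-free main peak of Lemma~\ref{LemW} and hence feeds only $\mathbb{E}[\mathcal{I}_{\rm W}^n]$, plus the clutter term $\mathcal{E}_\alpha[\bu^{\rm inc,0}]$, which is linear in $\eta$ and therefore mean-zero; the leftover terms $o(\sigma_\eta)+o(\omega^{d+2}\epsilon^d\sigma_\eta)+O(|\bx|^{-(d+1)/2})$ in \eqref{Hnoisy} are of strictly smaller order in the Born regime and are discarded. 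Thus ${\rm Cov}\big(\mathcal{I}_{\rm W}^n(\bz;\alpha),\mathcal{I}_{\rm W}^n(\bz';\alpha)\big)$ equals the covariance of the clutter contribution of $\mathcal{I}_{\rm W}^n$ alone.

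For plane incident P-waves, note that $\mathcal{H}[\bu^\infty_{\epsilon,P}]$, the operator $\mathcal{E}_P[\,\cdot\,]$ and the field $\bu^{\rm P,0}_j$ are all irrotational in the regime $\epsilon^d\to0$, so $\Pk_P$ acts as the identity on each of them and the $\alpha=S$ summand in \eqref{IW-DC} drops; the clutter contribution is then $-\tfrac1n\sum_{j=1}^n\Re\big\{\,c_P a^P_d\omega^2\widetilde{\tau}\,\overline{\mathcal{E}_P^{P,j}(\bz)}\cdot\bu^{\rm P,0}_j(\bz)\,\big\}$ with $\widetilde{\tau}=(\rho_0-\rho_2)|B_S|$. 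I would unfold $\mathcal{E}_P^{P,j}(\bz)$ via its definition and $\bu^{\rm P,0}_j(\bx)=\btheta_j e^{\i\K_P\bx\cdot\btheta_j}$, and use $(\Im\{\bGam^{\omega,0}_P(\bz,\by)\}\btheta_j)\cdot\btheta_j=\Im\{\bGam^{\omega,0}_P(\bz,\by)\}:\btheta_j\otimes\btheta_j$ to factor out the average $\tfrac1n\sum_{j=1}^n\btheta_j\otimes\btheta_j\,e^{\i\K_P(\bz-\by)\cdot\btheta_j}$. For $n$ large the plane-wave summation identity \eqref{identity4} replaces this average by $4\rho_0 c_P^2(\pi/\K_P)^{d-2}\Im\{\bGam^\omega_P(\bz,\by)\}$, and after identifying $\bGam^{\omega,0}_P$ with $\bGam^\omega_P$ at leading order the integrand collapses to $\Im\{\bGam^\omega_P(\bz,\by)\}:\Im\{\bGam^\omega_P(\bz,\by)\}=\Phi_{P,P}(\bz,\by)$. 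Collecting the scalar factors (using $\K_\alpha=\omega/c_\alpha$ and the identity $4\rho_0 c_\alpha^2(\pi/\K_\alpha)^{d-2}|a^\alpha_d|^2=\gamma_d/(c_\alpha\omega)$) gives
\begin{align*}
\mathcal{I}_{\rm W}^n(\bz;P)-\mathbb{E}\big[\mathcal{I}_{\rm W}^n(\bz;P)\big]\;\approx\;C_P\int_{\RR^d}\eta(\by)\,\Phi_{P,P}(\bz,\by)\,d\by,
\end{align*}
where $C_P$ is an explicit real constant (the outer $\Re\{\,\cdot\,\}$ becomes redundant because the $j$-averaged bracket is real-valued).

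Finally, multiplying this identity at $\bz$ and at $\bz'$, pushing $\mathbb{E}$ inside the double integral, and using $\mathbb{E}[\eta(\by)\eta(\by')]=\mathcal{C}_\eta(\by,\by')$ yields
\begin{align*}
{\rm Cov}\big(\mathcal{I}_{\rm W}^n(\bz;P),\mathcal{I}_{\rm W}^n(\bz';P)\big)\;\approx\;C_P^{\,2}\iint_{\RR^d\times\RR^d}\mathcal{C}_\eta(\by,\by')\,\Phi_{P,P}(\bz,\by)\,\Phi_{P,P}(\bz',\by')\,d\by\,d\by',
\end{align*}
and $C_P^{\,2}$, once the constants are collected, is the prefactor displayed in \eqref{CV1}. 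The incident S-wave case is identical after replacing $\btheta_j$ by the complementary directions $\btheta_j^{\perp,\ell}$, summing over $\ell$, and invoking \eqref{identity6} instead of \eqref{identity4} --- the orthonormality of $\{\btheta_j,\btheta_j^{\perp,\ell}\}$ being what makes the contraction go through --- which produces $\Phi_{S,S}$. Two points deserve flagging. There is no $1/n$ gain here, in contrast with the measurement-noise covariances \eqref{eq:43b}--\eqref{eq:43c}: a single realization of $\eta$ corrupts every illumination, so the $j$-average does not suppress the clutter but instead converges to a fixed functional of $\eta$. And the real work is bookkeeping --- carrying the three nested asymptotics simultaneously (the $\epsilon^d\to0$ expansion behind \eqref{Herglotz-DC}, the $\sigma_\eta\to0$ Born linearization behind \eqref{Hnoisy}, and the large-$n$ plane-wave averages \eqref{identity4}/\eqref{identity6}), verifying that every discarded cross term and remainder is of strictly smaller order, and tracking the powers of $\rho_0$, $c_\alpha$, $\omega$, $\pi/\K_\alpha$ and $|a^\alpha_d|^2$ through to the final prefactor.
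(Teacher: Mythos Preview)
Your proposal is correct and follows essentially the same route as the paper: isolate the clutter contribution $\mathcal{E}_\alpha$ to the indicator, collapse the $j$-average via the plane-wave identities \eqref{identity4}/\eqref{identity6} to produce $\int_{\RR^d}\eta(\by)\,\Phi_{\alpha,\alpha}(\bz,\by)\,d\by$, and then square and take expectations to obtain \eqref{CV1}. Your observation that the $j$-averaged bracket is already real (so the outer $\Re\{\cdot\}$ is redundant) is in fact slightly cleaner than the paper's intermediate step, which passes through the complex identity $\mathbb{E}[\Re X\,\Re Y]=\tfrac12\Re\{\mathbb{E}[X\overline{Y}]\}$ before substituting the large-$n$ approximation.
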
 
\begin{proof}
Let us first evaluate the speckle fields
\begin{align*}
\frac{1}{n}\sum_{j=1}^n a^P_d \overline{\mathcal{E}_P^{P,j}(\bz)}\cdot\bu^{P,0}_j(\bz)
\quad\text{and}\quad
\frac{1}{n}\sum_{j=1}^n \sum_{\ell=1}^{d-1}a^S_d \overline{\mathcal{E}_S^{S,j,\ell}(\bz)}\cdot\bu^{S,0}_{j,\ell}(\bz).
\end{align*}
By definition of the error function and  P-waves,  
\begin{align*}
\frac{1}{n}\sum_{j=1}^n a^P_d \overline{\mathcal{E}_P^{P,j}(\bz)}\cdot\bu^{P,0}_j(\bz)
=&
-4\rho_0^2c_P^2\omega^2|a^P_d|^2\left(\frac{\pi}{\K_P}\right)^{d-2}\frac{1}{n}\sum_{j=1}^n\int_{\RR^d}\Im\left\{\bGam^{\omega,0}_P(\bz,\by)\right\}\eta(\by)\overline{\bu^{P,0}_j(\by)}dy\cdot\bu^{P,0}_j(\bz)
\\
=&
-\frac{\rho_0 \gamma_d\K_P}{n}\sum_{j=1}^n\int_{\RR^d}\Im\left\{\bGam^{\omega,0}_P(\bz,\by)\right\}\eta(\by)\overline{\bu^{P,0}_j(\by)}\cdot\bu^{P,0}_j(\bz) dy
\\
=&
-\rho_0 \gamma_d\K_P\int_{\RR^d}\Im\left\{\bGam^{\omega,0}_P(\bz,\by)\right\}\eta(\by):\left(\frac{1}{n}\sum_{j=1}^n\btheta_j\otimes\btheta_j e^{i\K_P\btheta_j\cdot(\bz-\by)}\right)dy.
\end{align*}
Thanks to approximation \eqref{identity4}, for $n$ sufficiently large,
\begin{align}
\frac{1}{n}&\sum_{j=1}^n a^P_d \overline{\mathcal{E}_P^{P,j}(\bz)}\cdot\bu^{P,0}_j(\bz)
\approx
-4\rho_0^2c_P^2\gamma_d\K_P\left(\frac{\pi}{\K_P}\right)^{d-2}\int_{\RR^d}\eta(\by) \Phi_{P,P}(\bz,\by) d\by.
\label{T4}
\end{align}
Similarly, one easily obtains the approximation
\begin{align}
\frac{1}{n}&\sum_{j=1}^n\sum_{\ell=1}^{d-1} a^S_d \overline{\mathcal{E}_S^{S,j,\ell}(\bz)}\cdot\bu^{S,0}_{j,\ell}(\bz)
\approx
-4\rho_0^2c_S^2\gamma_d\K_S\left(\frac{\pi}{\K_S}\right)^{d-2}\int_{\RR^d}\eta(\by) \Phi_{S,S}(\bz,\by)d\by.
\label{T5}
\end{align}
Therefore, the covariance of the weighted location indicator is evaluated using Eq. \eqref{T4}  as 
\begin{align*}
{\rm Cov}\left(\mathcal{I}_W^n(\bz;P),\mathcal{I}_W^n(\bz';P)\right)
&=
\frac{c_P^2\omega^4\widetilde{\tau}^2}{n^2}\sum_{j,j'=1}^n\mathbb{E}\left[\Re\left\{ a^P_d \overline{\mathcal{E}_P^{P,j}(\bz)}\cdot\bu^{P,0}_j(\bz)\right\}\Re\left\{ a^P_d \overline{\mathcal{E}_P^{P,j'}(\bz')}\cdot\bu^{P,0}_{j'}(\bz')\right\}\right]
\nonumber
\\
&=
\frac{c_P^2\omega^4\widetilde{\tau}^2}{2}\Re\left\{\mathbb{E}\left[ \left(\frac{1}{n}\sum_{j=1}^na^P_d \overline{\mathcal{E}_P^{P,j}(\bz)}\cdot\bu^{P,0}_j(\bz) \right)\overline{\left(\frac{1}{n}\sum_{j'=1}^n a^P_d \overline{\mathcal{E}_P^{P,j'}(\bz')}\cdot\bu^{P,0}_{j'}(\bz')\right)}\right]\right\}
\nonumber
\\
&\approx
8\rho_0^4c_P^4\omega^6\gamma_d^2\widetilde{\tau}^2 \left(\frac{\pi}{\K_P}\right)^{2d-4}\iint_{\RR^d\times \RR^d}\mathbb{E}\left[\eta(\by)\eta(\by')\right] \Phi_{P,P}(\bz,\by)\Phi_{P,P}(\bz',\by')d\by d\by'. 
\end{align*}
By proceeding in the same manner and using approximation \eqref{T5}, on can get
\begin{align*}
{\rm Cov}\left(\mathcal{I}_W^n(\bz;S),\mathcal{I}_W^n(\bz';S)\right)
\approx
8\rho_0^4c_S^4\omega^6\gamma_d^2\widetilde{\tau}^2 \left(\frac{\pi}{\K_S}\right)^{2d-4}\iint_{\RR^d\times \RR^d}\mathbb{E}\left[\eta(\by)\eta(\by')\right] \Phi_{S,S}(\bz,\by)\Phi_{S,S}(\bz',\by')d\by d\by'.
\end{align*}
The conclusion follows from the last two equations and the definition of $\mathcal{C}_\eta$.
\end{proof}

\begin{rem}\label{rem1}
It is interesting to note, from estimates \eqref{T4}-\eqref{T5}, that the speckle field generated by the medium noise $\eta$ for  an $\alpha$-wave is roughly the medium noise smoothed by the integral kernel  $\Phi_{\alpha,\alpha}$. Moreover, its  correlation structure is approximately that of $\eta$ smoothed by the kernel $\Phi_{\alpha,\alpha}$ with a  correlation length given by the maximum of the wavelength of the incident field  and the correlation length of $\eta$, as suggested by Lemma \ref{Lem5.1}. Specifically, the speckle field has a correlation length of the order of the focal spot size of $\bz_S\to \mathcal{I}^n_{\rm W}(\bz_S;\alpha)$ in the neighborhood of $\bz_D$  so that it is not possible to recognize the peak procured by $\bz_S\to \mathcal{I}^n_{\rm W}(\bz_S;\alpha)$ based on its shape. The identification of the true peak is only possible through its height. It is worthwhile precising also  that the main peak of the location indicator is affected due to the medium noise. Finally, the estimates of the speckle field and its covariance both are independent of $n$ which means that the stability of the weighted location indicator relative to medium noise cannot be increased by increasing the number of illuminations on contrary to the measurement noise case. 
\end{rem}

\subsubsection{Speckle field analysis for elasticity contrast}\label{sss:speckleElasticity}

For analyzing the performance of the weighted location indicator in the elasticity contrast case, the covariance of the speckle field is estimated in Lemma \ref{Lem5.2}.
\begin{lem}\label{Lem5.2}
For all $\bz,\bz'\in\RR^d$, 
\begin{align}
{\rm Cov}\left(\mathcal{I}_W^n(\bz;\alpha),\mathcal{I}_W^n(\bz';\alpha)\right)
\approx
8\rho_0^4c_\alpha^4\omega^2\gamma_d^2 \left(\frac{\pi}{\K_\alpha}\right)^{2d-4}\iint_{\RR^d\times \RR^d}\mathcal{C}_\eta(\by,\by')\mathcal{Q}_{\eta}^\alpha[\MM(B_S)](\bz,\by)\mathcal{Q}_{\eta}^\alpha[\MM(B_S)](\bz',\by')d\by d\by',\label{CV2}
\end{align}
where $\mathcal{Q}_{\eta}^\alpha[\mathbb{A}]:\RR^d\times\RR^d\to \RR$, for any rank-four tensor $\mathbb{A}$, is defined by 
\begin{align}
\mathcal{Q}_{\eta}^\alpha[\mathbb{A}](\bx,\by):=
\Im\left\{\nabla_\bz\bGam^{\omega,0}_\alpha(\bx,\by)\right\}:\mathbb{A}\bullet\Im\left\{\nabla_\bz\bGam^{\omega,0}_\alpha(\bx,\by)\right\}.\label{Q-def}
\end{align}
\end{lem}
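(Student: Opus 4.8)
The plan is to mirror the proof of Lemma~\ref{Lem5.1}, carrying the additional EMT contractions that are present in the elasticity-contrast indicator \eqref{IW-EC}--\eqref{IWn}. In this regime $\mathcal{I}_{\rm W}^n(\bz;\alpha)$ depends on the back-propagator only through $\nabla\Pi_\alpha[\mathcal{H}[\bu^\infty_{\epsilon,\alpha}]]$; by \eqref{Hnoisy} the medium noise enters additively through the error operator $\mathcal{E}_\alpha[\bu^{\rm inc,0}]$, and $\Pi_\alpha$ acts as the identity on $\mathcal{E}_\alpha$ since the latter is assembled from $\bGam^{\omega,0}_\alpha$. Hence the reference part of $\mathcal{H}$ reproduces the noise-free peak of $\mathcal{I}_{\rm W}^n$ (cf.\ Lemma~\ref{LemW}) and contributes nothing to the covariance, while the error part produces a speckle field which, for $\alpha=P$, equals $\frac1n\sum_{j=1}^n c_P\Re\big\{a^P_d\,\overline{\nabla_\bz\mathcal{E}_P[\bu^{P,0}_j](\bz)}:\MM(B_S):\nabla\bu^{P,0}_j(\bz)\big\}$ (only the $\alpha=P$ term of \eqref{IW-EC} survives because $\Pi_S[\bu^{P,0}_j]=\mathbf{0}$), and analogously for $\alpha=S$ with the extra sum over $1\le\ell\le d-1$ and $\bu^{S,0}_{j,\ell}$. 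The covariance of this speckle field is exactly the left-hand side of \eqref{CV2}.

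The core step is the angular average. I would insert $\nabla_\bz\mathcal{E}_\alpha[\bw](\bz)=-4\rho_0^2c_\alpha^2\omega^2a^\alpha_d(\pi/\K_\alpha)^{d-2}\int_{\RR^d}\Im\{\nabla_\bz\bGam^{\omega,0}_\alpha(\bz,\by)\}\,\eta(\by)\,\bw(\by)\,d\by$ and, for $\alpha=P$, the plane wave $\bu^{P,0}_j(\by)=\btheta_j e^{i\K_P\btheta_j\cdot\by}$ with $\nabla\bu^{P,0}_j(\bz)=i\K_P\,\btheta_j\otimes\btheta_j\,e^{i\K_P\btheta_j\cdot\bz}$; what remains is an integral of $\eta(\by)$ against the triple average $\frac1n\sum_j\btheta_j\otimes\btheta_j\otimes\btheta_j\,e^{i\K_P\btheta_j\cdot(\bz-\by)}$, which by \eqref{identity10} is $\approx (4\rho_0c_P^2/i\K_P)(\pi/\K_P)^{d-2}\Im\{\nabla_\bz\bGam^{\omega,0}_P(\bz,\by)\}$. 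Together with the $i\K_P$ coming from $\nabla\bu^{P,0}_j$ and with the factor $\Im\{\nabla_\bz\bGam^{\omega,0}_P(\bz,\by)\}$ already produced by $\nabla_\bz\mathcal{E}_P$, the EMT sandwich collapses to $\Im\{\nabla_\bz\bGam^{\omega,0}_P(\bz,\by)\}:\MM(B_S)\bullet\Im\{\nabla_\bz\bGam^{\omega,0}_P(\bz,\by)\}=\mathcal{Q}^P_\eta[\MM(B_S)](\bz,\by)$ of \eqref{Q-def}, so the speckle field is $\approx -C_P\int_{\RR^d}\eta(\by)\,\mathcal{Q}^P_\eta[\MM(B_S)](\bz,\by)\,d\by$ with a real constant $C_P$ built from $c_P$, $|a^P_d|^2$, $\rho_0$, $\omega$ and $(\pi/\K_P)^{d-2}$. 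For $\alpha=S$ the same steps with $\bu^{S,0}_{j,\ell}(\by)=\btheta_j^{\perp,\ell}e^{i\K_S\btheta_j\cdot\by}$ lead to the average $\frac1n\sum_j\sum_\ell\btheta_j^{\perp,\ell}\otimes\btheta_j\otimes\btheta_j^{\perp,\ell}\,e^{i\K_S\btheta_j\cdot(\bz-\by)}$; using $\sum_\ell\btheta_j^{\perp,\ell}\otimes\btheta_j^{\perp,\ell}=\I_d-\btheta_j\otimes\btheta_j$ and splitting into a rank-one and a rank-three part, handled by the $\K_S$-analogues of \eqref{identity3} and \eqref{identity10} (equivalently, by reading off \eqref{identity8} after the $\ell$-sum), the two pieces recombine, up to a constant, as $\nabla_\bz\big[(\I_d+\K_S^{-2}\nabla_\bz\otimes\nabla_\bz)\Im\{g^\omega_S(\bz-\by)\}\big]$, i.e.\ as $\Im\{\nabla_\bz\bGam^{\omega,0}_S(\bz,\by)\}$, and the EMT sandwich again collapses to $\mathcal{Q}^S_\eta[\MM(B_S)](\bz,\by)$. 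Throughout I would keep track of the $a^\alpha_d$, $|a^\alpha_d|^2$, $\gamma_d$, $c_\alpha$ and $(\pi/\K_\alpha)^{d-2}$ factors exactly as in the proofs of Lemmas~\ref{LemTD-elasticity} and \ref{Lem5.1}.

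It then remains to assemble the covariance. Since the speckle field $-C_\alpha\int\eta(\by)\mathcal{Q}^\alpha_\eta[\MM(B_S)](\bz,\by)d\by$ is mean-zero in $\eta$, its two-point covariance equals, up to the same reduction of $\mathbb{E}[\Re(\cdot)\Re(\cdot)]$ used in Lemma~\ref{Lem5.1} and noting that $\mathcal{Q}^\alpha_\eta[\MM(B_S)]$ is real-valued (so the real parts and conjugates are inert), a constant times $\iint_{\RR^d\times\RR^d}\mathbb{E}[\eta(\by)\eta(\by')]\,\mathcal{Q}^\alpha_\eta[\MM(B_S)](\bz,\by)\,\mathcal{Q}^\alpha_\eta[\MM(B_S)](\bz',\by')\,d\by\,d\by'$; substituting $\mathbb{E}[\eta(\by)\eta(\by')]=\mathcal{C}_\eta(\by,\by')$ and collecting the accumulated constants yields \eqref{CV2} with the prefactor $8\rho_0^4c_\alpha^4\omega^2\gamma_d^2(\pi/\K_\alpha)^{2d-4}$.

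The main obstacle is the index bookkeeping in the step where the angular average over illumination directions is identified with the second copy of $\Im\{\nabla_\bz\bGam^{\omega,0}_\alpha(\bz,\by)\}$ in exactly the slots that make the EMT-sandwiched product equal to $\mathcal{Q}^\alpha_\eta[\MM(B_S)]$ of \eqref{Q-def} rather than some other contraction of the same tensors; this is the computation carried out in the proof of Lemma~\ref{LemTD-elasticity}, with the $\eta$-weighted $\bGam^{\omega,0}_\alpha$ here playing the role of $\nabla\bu^{\rm inc}(\bz_D)$ there, and for $\alpha=S$ it requires in addition the mixed third-order angular identity at wavenumber $\K_S$ obtained by combining \eqref{identity3} and \eqref{identity10}. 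A secondary point needing care is that \eqref{Hnoisy} holds only in the Born regime and carries the remainders $o(\sigma_\eta)$ and $o(\omega^{d+2}\epsilon^d\sigma_\eta)$, which must be shown to feed only into lower-order terms of the covariance.
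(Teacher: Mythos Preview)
Your proposal is correct and follows essentially the same route as the paper's proof: both compute the speckle field $\frac{1}{n}\sum_j a^\alpha_d\,\overline{\nabla\mathcal{E}^{\alpha,j}_\alpha(\bz)}:\MM(B_S):\nabla\bu^{\alpha,0}_j(\bz)$ by inserting the error operator, reducing the angular sum via \eqref{identity10} to produce the second copy of $\Im\{\nabla_\bz\bGam^{\omega,0}_\alpha\}$, recognizing the resulting EMT sandwich as $\mathcal{Q}^\alpha_\eta[\MM(B_S)]$, and then squaring to get the covariance \eqref{CV2}. The paper only writes out $\alpha=P$ explicitly and declares $\alpha=S$ analogous, whereas you sketch the $S$-case as well; your identification of the index bookkeeping and the Born-remainder control as the two points needing care is accurate.
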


\begin{proof}
Let us prove the result for $\alpha=P$. The situation when $\alpha=S$ can be dealt with similarly.  Let us first evaluate the speckle field. By the definition of the error function, 
\begin{align*}
\frac{1}{n}\sum_{j=1}^n a^P_d &\overline{\nabla\mathcal{E}_P^{P,j}(\bz)}:\MM(B_S):\nabla\bu^{P,0}_j(\bz)
\\
=&
-4\rho_0^2c_P^2\omega^2|a^P_d|^2\left(\frac{\pi}{\K_P}\right)^{d-2}\frac{1}{n}\sum_{j=1}^n\int_{\RR^d}\Im\left\{\nabla_\bz\bGam^{\omega,0}_P(\bz,\by)\right\}\eta(\by)\overline{\bu^{P,0}_j(\by)} :\MM(B_S):\nabla\bu^{P,0}_j(\bz)d\by
\\
=&
-\frac{\rho_0 \gamma_d\K_P}{n}\sum_{j=1}^n\int_{\RR^d}\Im\left\{\nabla_\bz\bGam^{\omega,0}_P(\bz,\by)\right\}\eta(\by)\overline{\bu^{P,0}_j(\by)}:\MM(B_S):\nabla\bu^{P,0}_j(\bz)d\by
\\
=&
-\frac{\rho_0 \gamma_d\K_P}{n}\sum_{j=1}^n\int_{\RR^d}\eta(\by)\Im\left\{\nabla_\bz\bGam^{\omega,0}_P(\bz,\by)\right\}\btheta_j:\MM(B_S):i\K_P\btheta_j\otimes\btheta_j e^{i\K_P\btheta_j\cdot(\bz-\by)}d\by.
\end{align*}
After proceeding component-wise, one can easily check out that 
\begin{align*}
\frac{1}{n}&\sum_{j=1}^n a^P_d \overline{\nabla\mathcal{E}_P^{P,j}(\bz)}:\MM(B_S):\nabla\bu^{P,0}_j(\bz)
=
-\frac{\rho_0 \gamma_d\K_P}{n}\sum_{j=1}^n\int_{\RR^d}\eta(\by)\MM(B_S)\bullet\Im\left\{\nabla_\bz\bGam^{\omega,0}_P(\bz,\by)\right\}:i\K_P\btheta_j\otimes\btheta_j\otimes\btheta_j e^{i\K_P\btheta_j\cdot(\bz-\by)}d\by.
\end{align*}
Consequently, thanks to approximation \eqref{identity4},
\begin{align}
\frac{1}{n}\sum_{j=1}^n a^P_d \overline{\nabla\mathcal{E}_P^{P,j}(\bz)}:\MM(B_S):\nabla\bu^{P,0}_j(\bz)
\approx
-4\rho_0^2 \gamma_dc_P^2\K_P\left(\frac{\pi}{\K_P}\right)^{d-2}\int_{\RR^d}\eta(\by)\mathcal{Q}_{\eta}^P[\MM(B_S)](\bz,\by)d\by.
\label{T6}
\end{align}

Let us now estimate the covariance of the weighted location indicator. In elasticity contrast case, 
\begin{align*}
{\rm Cov}&\left(\mathcal{I}_W^n(\bz;P),\mathcal{I}_W^n(\bz';P)\right)
\\
&=
\frac{c_P^2  }{n^2}\sum_{j,j'=1}^n\mathbb{E}\Bigg[\Re\left\{ a^P_d \overline{\nabla\mathcal{E}_P^{P,j}(\bz)}:\MM(B_S):\nabla\bu^{P,0}_j(\bz)\right\}
\Re\left\{ a^P_d \overline{\nabla\mathcal{E}_P^{P,j'}(\bz')}:\MM(B_S):\nabla\bu^{P,0}_{j'}(\bz')\right\}\Bigg]
\\
&=
\frac{c_P^2 }{2}\Re\Bigg\{\mathbb{E}\Bigg[ \left(\frac{1}{n}\sum_{j=1}^na^P_d \overline{\nabla\mathcal{E}_P^{P,j}(\bz)}:\MM(B_S):\nabla\bu^{P,0}_j(\bz) \right)
\overline{\left(\frac{1}{n}\sum_{j'=1}^n a^P_d \overline{\nabla\mathcal{E}_P^{P,j'}(\bz')}:\MM(B_S):\nabla\bu^{P,0}_{j'}(\bz')\right)}\Bigg]\Bigg\}.
\end{align*}
Therefore, by virtue of estimate \eqref{T6},
\begin{align*}
{\rm Cov}\left(\mathcal{I}_W^n(\bz;P),\mathcal{I}_W^n(\bz';P)\right)
\approx
8\rho_0^4c_P^4\omega^2\gamma_d^2 \left(\frac{\pi}{\K_P}\right)^{2d-4}\iint_{\RR^d\times \RR^d}\mathcal{C}_\eta(\by,\by')\mathcal{Q}_{\eta}^P[\MM(B_S)](\bz,\by)\mathcal{Q}_{\eta}^P[\MM(B_S)](\bz',\by')d\by d\by'.
\end{align*}
\end{proof}

It can demonstrated that $\mathcal{Q}_\eta^\alpha[\MM(B_S)]$ defined in Eq. \eqref{Q-def} is a non-negative real-valued function for both $\alpha=P$ and $\alpha=S$. For example, when $B_S$ is a ball in $\RR^d$ and $\MM(B_S)=\MM(B_D)$,
\begin{align*}
&\mathcal{Q}_\eta^P[\MM(B_S)](\bx,\by):=a\left\|\Im\left\{\nabla_\bx\bGam^{\omega,0}_P(\bx,\by)\right\}\right\|^2+b\left\|\Im\left\{\nabla_\bx\cdot\bGam^{\omega,0}_P(\bx,\by)\right\}\right\|^2,
\\
&\mathcal{Q}_\eta^S[\MM(B_S)](\bx,\by):=\frac{a}{2}\left\|\Im\left\{\nabla_\bx\bGam^{\omega,0}_S(\bx,\by)\right\}\right\|^2+\frac{a}{2}\Im\left\{\nabla_\bx\bGam^{\omega,0}_S(\bx,\by)\right\}:\Im\left\{\left(\nabla_\bx\bGam^{\omega,0}_S(\bx,\by)\right)^\top\right\}+b\left\|\Im\left\{\nabla_\bx\cdot\bGam^{\omega,0}_S(\bx,\by)\right\}\right\|^2.
\end{align*} 
Thanks to this observation and the comparison of the speckle fields (\eqref{T4}-\eqref{T5} and \eqref{T6}) and their covariances (\eqref{CV1} and \eqref{CV2}), one can draw similar conclusions regarding the effects of medium noise on the performance of  the location indicator in the  elasticity contrast case as in Remark \ref{rem1}. The only difference is that the smoothing kernel $\mathcal{Q}_\eta^\alpha$ replaces the kernel $\Phi_{\alpha,\alpha}$ in the elasticity contrast case (which also is non-negative and achieves its maximum value at $\bz_S=\bz_D$). 

\subsection{Randomly fluctuating elastic parameters}

Let  the Lam\'e parameters of the background medium fluctuate around the reference values. For brevity, it is assumed that only the shear modulus fluctuates around its reference value $\mu_0$. The general case is delicate but amenable to the same treatment. 

Let $\mu$ denote the randomly fluctuating shear modulus defined by 
\begin{align*}
\mu(\bx):=\mu_0\left(1+\varphi(\bx)\right), \qquad\bx\in\RR^d,
\end{align*}
where $\varphi$ is a random process. It is assumed that $\varphi$ is compactly supported in $\RR^d$ with typical size $\sigma_\varphi$ and the fluctuation is weak enough that the terms of the order $o(\sigma_\varphi)$ are negligible  so that the Born approximation is justified. As for the medium with randomly fluctuating  density parameter, first the noisy measurements are modeled with the goal of approximating the noisy back-propagator as the sum of the reference back-propagator and the clutter error term. 

\subsubsection{Measurement model}
The incident field in the background domain now satisfies 
\begin{align*}
\OL_{\lambda_0,\mu_0}\left[\bu^{\rm inc}\right](\bx)+\rho_0\omega^2\bu^{\rm inc}(\bx)= - 2\nabla\cdot \varphi\nabla^s\bu^{\rm inc}(\bx), \qquad\bx\in\RR^d.
\end{align*}
Thanks to the Born approximation and asymptotic expansion \eqref{GamFar}, for $|\bx|\to+\infty$,
\begin{align}
\bu^{\rm inc}(\bx)-\bu^{\rm inc,0}(\bx)
=&
- 2\int_{\RR^d}\bGam^{\omega,0}(\bx,\by)\nabla\cdot\varphi(\by)\nabla^s\bu^{\rm inc,0}(\by)d\by+o\left(\sigma_{\varphi}\right)
\nonumber
\\
= &
-2\Bigg[
a^P_d\frac{e^{i\K_P|\bx|}}{|\bx|^{(d-1)/2}}\int_{\RR^d}\left(\hbx\otimes\hbx \right)e^{-i\K_P\hbx\cdot\by}\nabla\cdot\varphi(\by)\nabla^s\bu^{\rm inc,0}(\by)d\by
\nonumber
\\
&+
a^S_d\frac{e^{i\K_S|\bx|}}{|\bx|^{(d-1)/2}}\int_{\RR^d}\left(\I_d-\hbx\otimes\hbx \right)e^{-i\K_S\hbx\cdot\by}\nabla\cdot\varphi(\by)\nabla^s\bu^{\rm inc,0}(\by)d\by
+o\left(\sigma_{\varphi}\right)+O\left(\frac{1}{|\bx|^{(d+1)/2}}\right)
\Bigg]
\nonumber
\\
= &
-2\Bigg[
a^P_d\frac{e^{i\K_P|\bx|}}{|\bx|^{(d-1)/2}}\int_{\RR^d}\left[\left(-\frac{1}{\K_P^2}\nabla\otimes\nabla\right)e^{-i\K_P\hbx\cdot\by}\right]\nabla\cdot\varphi(\by)\nabla^s\bu^{\rm inc,0}(\by)d\by
\nonumber
\\
&+
a^S_d\frac{e^{i\K_S|\bx|}}{|\bx|^{(d-1)/2}}\int_{\RR^d}\left[\left(\I_d+\frac{1}{\K_S^2}\nabla\otimes\nabla\right)e^{-i\K_S\hbx\cdot\by}\right]\nabla\cdot\varphi(\by)\nabla^s\bu^{\rm inc,0}(\by)d\by
+o\left(\sigma_{\varphi}\right)+O\left(\frac{1}{|\bx|^{(d+1)/2}}\right)
\Bigg].
\label{intermid11}
\end{align}
Moreover, 
\begin{align}
&\left(\bu^{\rm tot}(\bx)-\bu^{\rm inc}(\bx)\right)
= 
\left(\bu^{\rm tot,0}(\bx)-\bu^{\rm inc, 0}(\bx)\right)
-2\int_{\RR^d}\bGam^{\omega,0}(\bx,\by)\nabla\cdot\varphi\nabla^s\left(\bu^{\rm tot,0}(\by)-\bu^{\rm inc, 0}(\by)\right)d\by+o\left(\sigma_{\varphi}\right).
\label{intermid12}
\end{align}
As the second term on the RHS of Eq. \eqref{intermid12} is of the order $o(\omega^{d+2}\epsilon^d\sigma_{\varphi})$ thanks to Corollary \ref{corAsymp}, 
\begin{align*}
\left(\bu^{\rm tot}(\bx)-\bu^{\rm inc}(\bx)\right)
=&
\left(\bu^{\rm tot,0}(\bx)-\bu^{\rm inc, 0}(\bx)\right)
+o\left(\sigma_{\varphi}\right)
+o\left(\omega^{d+2}\epsilon^d\sigma_{\varphi}\right) .
\end{align*}
Consequently, from Eqs. \eqref{intermid11} and \eqref{intermid12},  for $|\bx|\to+\infty$,
\begin{align}
\bu^{\rm tot}(\bx)-\bu^{{\rm inc,0}}(\bx)
=&\frac{e^{i\K_P|\bx|}}{|\bx|^{(d-1)/2}}
\left(\bu^{\infty,0}_{\epsilon,P}(\hbx)-2a^P_d\int_{\RR^d}\left[\left(-\frac{1}{\K_P^2}\nabla\otimes\nabla\right)e^{-i\K_P\hbx\cdot\by}\right]\nabla\cdot\varphi(\by)\nabla^s\bu^{\rm inc,0}(\by)d\by\right)
\nonumber
\\
&+ \frac{e^{i\K_S|\bx|}}{|\bx|^{(d-1)/2}}
\left(\bu^{\infty,0}_{\epsilon,S}(\hbx)-2a^S_d
\int_{\RR^d}\left[\left(\I_d+\frac{1}{\K_S^2}\nabla\otimes\nabla\right)e^{-i\K_S\hbx\cdot\by}\right]\nabla\cdot\varphi(\by)\nabla^s\bu^{\rm inc,0}(\by)d\by
\right)
\nonumber
\\
&
+o\left(\sigma_{\varphi}\right)
+o\left(\omega^{d+2}\epsilon^d\sigma_{\varphi}\right) 
+O\left(\frac{1}{|\bx|^{(d+1)/2}}\right).
\label{intermid13}
\end{align}
Since the noisy data $\bu^\infty_{\epsilon,\alpha}(\hbx)$  is defined by
\begin{align*}
\bu^{\rm tot}(\bx)-\bu^{{\rm inc},0}(\bx)= \frac{e^{i\K_P|\bx|}}{|\bx|^{(d-1)/2}}\bu^{\infty}_{\epsilon,P}(\hbx)+ \frac{e^{i\K_S|\bx|}}{|\bx|^{(d-1)/2}}\bu^{\infty}_{\epsilon,S}(\hbx)+O\left(\frac{1}{|\bx|^{(d+1)/2}}\right),
\end{align*} 
it is easy to see that, by virtue of Eq. \eqref{intermid13}, 
\begin{align}
\bu^{\infty}_{\epsilon,P}(\hbx)
=&
\bu^{\infty,0}_{\epsilon,P}(\hbx)
-2 a^P_d\int_{\RR^d}\left[\frac{-1}{\K^2_P}\nabla\otimes\nabla\left(e^{-i\K_P\hbx\cdot\by}\right)\right]\nabla\cdot\varphi(\by)\nabla^s\bu^{\rm inc, 0}(\by)d\by
+o\left(\sigma_{\varphi}\right)
+o\left(\omega^{d+2}\epsilon^d\sigma_{\varphi}\right) 
+O\left(\frac{1}{|\bx|^{(d+1)/2}}\right), \label{UpFarNoisy2}
\\
\bu^{\infty}_{\epsilon,P}(\hbx)
=&
\bu^{\infty,0}_{\epsilon,S}(\hbx)
-2 a^S_d\int_{\RR^d}\left[\I_d+\frac{1}{\K^2_S}\nabla\otimes\nabla\left(e^{-i\K_S\hbx\cdot\by}\right)\right]\nabla\cdot\varphi(\by)\nabla^s\bu^{\rm inc, 0}(\by)d\by
+o\left(\sigma_{\varphi}\right)
+o\left(\omega^{d+2}\epsilon^d\sigma_{\varphi}\right) 
+O\left(\frac{1}{|\bx|^{(d+1)/2}}\right).\label{UsFarNoisy2}
\end{align}
Finally, the noisy back-propagator $\mathcal{H}[\bu^\infty_{\epsilon,\alpha}]$ can be approximated using the noisy far-field patterns \eqref{UpFarNoisy2} and \eqref{UsFarNoisy2} as
\begin{align*}
\mathcal{H}[\bu^\infty_{\epsilon,\alpha}](\bz)
\approx & \mathcal{H} [\bu^{\infty,0}_{\epsilon,\alpha}](\bz)
-\widetilde{\mathcal{E}}_\alpha\left[\bu^{\rm inc,0}\right](\bz),\quad  \bz\in\RR^d,
\end{align*}
where the error operator is now defined by 
\begin{align*}
\widetilde{\mathcal{E}}_\alpha\left[\bw\right](\bz):=-8\rho_0c_\alpha^2a^\alpha_d\left(\frac{\pi}{\K_\alpha}\right)^{d-2}
\int_{\RR^d}\Im\left\{\bGam^{\omega,0}_\alpha(\bz,\by)\right\}\nabla\cdot\varphi(\by)\nabla^s\bw(\by)d\by.
\end{align*}
Henceforth, the  notation $\widetilde{\mathcal{E}}_\alpha^{P, j}$ and $\widetilde{\mathcal{E}}_\alpha^{S, j,\ell}$ is used for the errors associated to  $\bu^{\rm P,0}_{j}$ and $\bu^{\rm S,0}_{j,\ell}$, respectively. Let us now study the speckle field generated by the back-propagator $\widetilde{\mathcal{E}}_\alpha$. Only the results for the density contrast case are presented. The elasticity contrast case can be discussed  as in Section \ref{sss:speckleElasticity}.

\subsubsection{Speckle pattern for density contrast}

Let us proceed  as in Section \ref{ss:MediumDensity} and evaluate the speckle fields,
\begin{align*}
\frac{1}{n}\sum_{j=1}^n a^P_d \overline{\widetilde{\mathcal{E}}_P^{P,j}(\bz)}\cdot\bu^{P,0}_j(\bz)
\quad\text{and}\quad
\frac{1}{n}\sum_{j=1}^n \sum_{\ell=1}^{d-1}a^S_d \overline{\widetilde{\mathcal{E}}_S^{S,j,\ell}(\bz)}\cdot\bu^{S,0}_{j,\ell}(\bz).
\end{align*}
Accordingly, for incident P-waves, 
\begin{align*}
\frac{1}{n}\sum_{j=1}^n a^P_d \overline{\widetilde{\mathcal{E}}_P^{P,j}(\bz)}\cdot\bu^{P,0}_j(\bz)
=&
-8\rho_0c_P^2|a^P_d|^2\left(\frac{\pi}{\K_P}\right)^{d-2}\frac{1}{n}\sum_{j=1}^n \int_{\RR^d}\Im\left\{\bGam^{\omega,0}_{P}(\bz,\by)\right\}\nabla\cdot\varphi(\by)\overline{\nabla^s\bu^{P,0}_j(\by)}d\by\cdot\bu^{P,0}_j(\bz)
\\
=&
-\frac{2\gamma_d}{c_P\omega}\frac{1}{n}\sum_{j=1}^n \int_{\RR^d}\Im\left\{\bGam^{\omega,0}_{P}(\bz,\by)\right\}\overline{\left(i\K_P\nabla\varphi(\by)\cdot\btheta_j-\K_P^2\varphi(\by)\right)\bu^{P,0}_j(\by)}\cdot\bu^{P,0}_j(\bz)d\by.
\end{align*}
One substituting the expressions for $\bu^{P,0}_j$ and doing a few simple manipulations, one arrives at 
\begin{align*}
\frac{1}{n}\sum_{j=1}^n a^P_d \overline{\widetilde{\mathcal{E}}_P^{P,j}(\bz)}\cdot\bu^{P,0}_j(\bz)
=&
\frac{2\gamma_d}{c_P\omega}\int_{\RR^d}\Bigg[
\left(\frac{1}{n}\sum_{j=1}^ni\K_P\btheta_j\otimes\btheta_j\otimes\btheta_je^{i\K_P\btheta_j\cdot(\bz-\by)}\right):\nabla\varphi(\by)\otimes\Im\left\{\bGam^{\omega,0}_{P}(\bz,\by)\right\}
\\
&\qquad
+
\K_P^2\left(\frac{1}{n}\sum_{j=1}^n\btheta_j\otimes\btheta_je^{i\K_P\btheta_j\cdot(\bz-\by)}\right):\varphi(\by)\Im\left\{\bGam^{\omega,0}_{P}(\bz,\by)\right\}
\Bigg]
d\by.
\end{align*}
By invoking identities \eqref{identity10} and \eqref{identity4}, one gets
\begin{align*}
\frac{1}{n}\sum_{j=1}^n& a^P_d \overline{\widetilde{\mathcal{E}}_P^{P,j}(\bz)}\cdot\bu^{P,0}_j(\bz)
\\
\approx&
\frac{8\rho_0\gamma_d}{\K_P}\left(\frac{\pi}{\K_P}\right)^{d-2}\int_{\RR^d}\Bigg[
\Im\left\{\nabla_\bz\bGam^{\omega,0}_{P}(\bz,\by)\right\}:\nabla\varphi(\by)\otimes\Im\left\{\bGam^{\omega,0}_{P}(\bz,\by)\right\}
+
\K_P^2\Im\left\{\bGam^{\omega,0}_{P}(\bz,\by)\right\}:\varphi(\by)\Im\left\{\bGam^{\omega,0}_{P}(\bz,\by)\right\}
\Bigg]
d\by.
\end{align*}
Since $\varphi$ is compactly supported, one can simplify the above expression as 
\begin{align}
\frac{1}{n}\sum_{j=1}^n a^P_d \overline{\widetilde{\mathcal{E}}_P^{P,j}(\bz)}\cdot\bu^{P,0}_j(\bz)
\approx&
\frac{4\rho_0\gamma_d}{\K_P}\left(\frac{\pi}{\K_P}\right)^{d-2}
\int_{\RR^d}
\varphi(\by)\left(2\K_P^2+\Delta\right)\left\|\Im\left\{\bGam^{\omega,0}_{P}(\bz,\by)\right\}\right\|^2
d\by
\nonumber
\\
=&
\frac{4\rho_0\gamma_d}{\K_P}\left(\frac{\pi}{\K_P}\right)^{d-2}
\int_{\RR^d}
\left[\left(2\K_P^2+\Delta\right)\varphi(\by)\right] \Phi_{P,P}(\bz,\by)
d\by.
\label{T8}
\end{align}
Similarly, for incident S-waves, 
\begin{align*}
\frac{1}{n}&\sum_{j=1}^n \sum_{\ell=1}^{d-1} a^S_d \overline{\widetilde{\mathcal{E}}_S^{S,j,\ell}(\bz)}\cdot\bu^{S,0}_{j,\ell}(\bz)
\\
=&
\frac{2\gamma_d}{c_S\omega}\frac{1}{n}\sum_{j=1}^n \sum_{\ell=1}^{d-1}\int_{\RR^d}\Im\left\{\bGam^{\omega,0}_{S}(\bz,\by)\right\}
\left[\left(i\K_S\nabla\varphi(\by)\cdot\btheta_j+\K_S^2\varphi(\by)\right)\overline{\bu^{S,0}_{j,\ell}(\by)}+i\K_S\left(\nabla\varphi(\by)\cdot\overline{\bu^{S,0}_{j,\ell}(\by)}\right)\btheta_j\right]\cdot\bu^{S,0}_{j,\ell}(\bz)d\by.
\end{align*}
On substituting the expressions for $\bu^{S,0}_{j,\ell}$ and after a few simple manipulations, one arrives at 
\begin{align*}
\frac{1}{n}\sum_{j=1}^n\sum_{\ell=1}^{d-1}a^S_d \overline{\widetilde{\mathcal{E}}_S^{S,j,\ell}(\bz)}\cdot\bu^{S,0}_{j,\ell}(\bz)
=&
\frac{2\gamma_d}{c_S\omega}\int_{\RR^d}\Bigg[
\left(\frac{1}{n}\sum_{j=1}^n\sum_{\ell=1}^{d-1}i\K_S\btheta_j^{\perp,\ell}\otimes\btheta_j^{\perp,\ell}\otimes\btheta_je^{i\K_S\btheta_j\cdot(\bz-\by)}\right):\nabla\varphi(\by)\otimes\Im\left\{\bGam^{\omega,0}_{S}(\bz,\by)\right\}
\\
&+
\frac{2\gamma_d}{c_S\omega}\int_{\RR^d}\Bigg[
\left(\frac{1}{n}\sum_{j=1}^n\sum_{\ell=1}^{d-1}i\K_S\btheta_j\otimes\btheta_j^{\perp,\ell}\otimes\btheta_j^{\perp,\ell}e^{i\K_S\btheta_j\cdot(\bz-\by)}\right):\nabla\varphi(\by)\otimes\Im\left\{\bGam^{\omega,0}_{S}(\bz,\by)\right\}
\\
&+
\K_S^2\left(\frac{1}{n}\sum_{j=1}^n\sum_{\ell=1}^{d-1}\btheta_j^{\perp,\ell}\otimes\btheta_j^{\perp,\ell} e^{i\K_S\btheta_j\cdot(\bz-\by)}\right):\varphi(\by)\Im\left\{\bGam^{\omega,0}_{S}(\bz,\by)\right\}
\Bigg]
d\by.
\end{align*}
Finally, using the identities in Proposition \ref{Prop:identities},
\begin{align}
\frac{1}{n}\sum_{j=1}^n\sum_{\ell=1}^{d-1}a^S_d \overline{\widetilde{\mathcal{E}}_S^{S,j,\ell}(\bz)}\cdot\bu^{S,0}_{j,\ell}(\bz)
\approx&
\frac{8\rho_0\gamma_d}{\K_S}\left(\frac{\pi}{\K_S}\right)^{d-2}\int_{\RR^d}\Bigg[
\Im\left\{\nabla_\bz\bGam^{\omega,0}_{S}(\bz,\by)\right\}:\nabla\varphi(\by)\otimes\Im\left\{\bGam^{\omega,0}_{S}(\bz,\by)\right\}
\nonumber
\\
&+
\Im\left\{\nabla_\bz\bGam^{\omega,0}_{S}(\bz,\by)\right\}^\top:\nabla\varphi(\by)\otimes\Im\left\{\bGam^{\omega,0}_{S}(\bz,\by)\right\}
+
\K_S^2\Im\left\{\bGam^{\omega,0}_{S}(\bz,\by)\right\}:\varphi(\by)\Im\left\{\bGam^{\omega,0}_{S}(\bz,\by)\right\}
\Bigg]
d\by.
\label{T9}
\end{align}

The expressions \eqref{T8} and \eqref{T9}  suggest that the salient features of the speckle fields remain the same as discussed in Section \ref{ss:MediumDensity} when there is a random fluctuation in the background shear modulus . Precisely, the speckle field can still be regarded as a smoothing transform of the random fluctuation or its derivatives. Although, the kernel of this integral transformation is very complicated but has a diameter of the order of wavelength. Therefore, it can be concluded that the speckle field has a correlation length proportional to the maximum of the focal spot size of $\bz_S\mapsto\mathcal{I}^n_{\rm W}(\bz_S,\alpha)$ and the correlation length of $\varphi$. Therefore, the identification of the true peak of $\mathcal{I}^n_{\rm W}$ is possible only through its height that may also be affected due to the medium noise. Moreover, the stability features of the weighted location indicator with respect to the fluctuations in the shear modulus  are independent of the number of illuminations so that no more stability can be gained by increasing the number of incident fields. 

\section{Conclusion}\label{s:conclusion}

In this article, single- and multi-short topological sensitivity based location indicators for detecting homogeneous elastic inclusions were analyzed. It was demonstrated that the classical location indicators may fail to locate an inclusion when the mode-conversion artifacts are strong as the corresponding topological derivative is not expected to observe its most-prominent decrease at  the location of the inclusion. In order to tackle the mode-conversion phenomenon, weighted location indicators were designed by correlating the shear and  pressure components of the back-propagators with those of the incident fields and taking a weighted aggregate. These indicators appeared to be topological sensitivity based indicators (i.e., associated to the topological derivative of a cost functional) when the inclusion had only a contrast with the background medium in the density parameter. However, when the contrast between inclusion and the background was in terms of the Lam\'e parameters, the weighted indicator was found to be rather of the Kirchhoff type. The stability of the weighted indicators was discussed rigorously with respect to an additive measurement noise and a random medium noise. It was demonstrated that the weighted indicators are stable with respect to measurement noise whereas moderately stable with respect to random medium noise. 

\appendix

\section{Proof of proposition \ref{Prop:identities}}\label{A:Identities}

The first three identities are only derived here, the rest can be derived analogously. The identity \eqref{identity3} easily follows from the fact that 
\begin{align*}
\int_{\mathbb{S}^{d-1}}e^{i\K_\alpha\hbx\cdot(\by-\bz)}d\sigma(\hbx) =
\begin{cases}
j_0(\K_P|\by-\bz|), & d=3,
 \\
 J_0(\K_P|\by-\bz|), & d=2,
 \end{cases}
\end{align*}
where $j_0$ and $J_0$ are spherical and cylindrical Bessel functions of first kind and order zero, respectively. In order to prove the rest of the identities, note that, for $n$ sufficiently large, 
\begin{align*}
\frac{1}{n}\sum_{j=1}^n\btheta_j\otimes\btheta_j e^{i\K_P(\by-\bz)\cdot\btheta_j}
\approx
-\frac{1}{\K_P^2} \nabla\otimes \nabla \int_{\mathbb{S}^{d-1}}  e^{i\K_P(\by-\bz)\cdot\btheta} d\btheta,
\end{align*}
and 
$$
\btheta_j\otimes\btheta_j+\sum_{\ell=1}^{d-1}\btheta_j^{\perp,\ell}\otimes\btheta_j^{\perp,\ell}=\I_d,
$$
since $\{\btheta_j,\btheta_j^{\perp,\ell}\}$ forms a basis for $\RR^d$ for each $j=1,\cdots, n$. Therefore, the above identities together with Eq. \eqref{identity3} furnish,
\begin{align*}
&\frac{1}{n}\sum_{j=1}^n\btheta_j\otimes\btheta_j e^{i\K_P(\by-\bz)\cdot\btheta_j}\approx-\frac{4}{\K_P^2}\left(\frac{\pi}{\K_P}\right)^{d-2} \nabla\otimes \nabla\left( \Im\left\{ g_P(\by-\bz)\right\}\right)
= 4\rho_0 c_P^2\left(\frac{\pi}{\K_P}\right)^{d-2}  \Im\left\{ \bGam^\omega_P(\by,\bz)\right\},
\\
&\frac{1}{n}\sum_{j=1}^n\sum_{\ell=1}^{d-1} \btheta_j^{\perp,\ell}\otimes\btheta_j^{\perp,\ell} e^{i\K_S(\by-\bz)\cdot\btheta_j}
\approx 
\left(\I_d+\frac{1}{\K_S^2} \nabla\otimes \nabla\right)\int_{\mathbb{S}^{d-1}}  e^{i\K_S(\by-\bz)\cdot\btheta} d\btheta 
\approx 
4\rho_0 c_S^2\left(\frac{\pi}{\K_S}\right)^{d-2}  \Im\left\{ \bGam^\omega_S(\by,\bz)\right\}.
\end{align*}

\section{Proof of proposition \ref{PropFar}}\label{A:PropFar}
Let us prove the second identity for $d=3$. The case for $d=2$ is identical. For the first identity, refer, for instance, to \cite{AlvesKress, Gintides}.  Remark that, for all $\K\in\RR_+$, 
\begin{align}
\nabla_\by\left(\frac{(\bx-\by)\otimes (\bx-\by)}{|\bx-\by|^3}e^{i\K|\bx-\by|}\right)
=&\sum_{i,j,k} \frac{\partial}{\partial x_i}\left(\frac{(x_j-y_j)(x_k-y_k)}{|\bx-\by|^3}e^{i\K|\bx-\by|}\right) \hbe_i\otimes\hbe_j\otimes\hbe_k
\nonumber
\\
=&\sum_{i,j,k} i\K\left(\frac{(x_i-y_i)(x_j-y_j)(x_k-y_k)}{|\bx-\by|^4}e^{i\K|\bx-\by|}\right) \hbe_i\otimes\hbe_j\otimes\hbe_k
+\mathcal{O}\left(\frac{1}{|\bx|^2}\right)
\nonumber
\\
=& i\K\, e^{-\i\K\hbx\cdot\by} \frac{e^{i\K|\bx|} }{|\bx|}\hbx\otimes\hbx\otimes\hbx+\mathcal{O}\left(\frac{1}{|\bx|^2}\right).
\label{T1}
\end{align}
Similarly, for all $\K\in\RR_+$,
\begin{align}
\nabla_\by\left(\frac{e^{i\K|\bx-\by|}}{|\bx-\by|}\I_3\right) 
=&\sum_{i}\frac{\partial}{\partial x_i}\left(\frac{e^{i\K|\bx-\by|}}{|\bx-\by|}\right) \hbe_i \otimes \I_3+\mathcal{O}\left(\frac{1}{|\bx|^2}\right)
=  i\K\, e^{-\i\K\hbx\cdot\by} \frac{e^{i\K|\bx|} }{|\bx|}\hbx\otimes\I_3+\mathcal{O}\left(\frac{1}{|\bx|^2}\right).
\label{T2}
\end{align}
Therefore, thanks to Eqs. \eqref{T1} and \eqref{T2}, the expression \eqref{Green_fun} leads to 
\begin{align}
\nabla_\by\bGam^\omega(\bx,\by) 
=& 
\frac{1}{\mu_0}\nabla_\by\left[\left(\I_3+\frac{1}{\K_S^2}\nabla_\by\otimes\nabla_\by\right)\frac{e^{i\K_S|\bx-\by|}}{4\pi |\bx-\by|}-\frac{1}{\K_S^2}\nabla_\by\otimes\nabla_\by\frac{e^{i\K_P|\bx-\by|}}{4\pi|\bx-\by|}\right] 
\nonumber
\\
=&
\frac{1}{4\pi\mu_0}\nabla_\by\left[\frac{e^{i\K_S|\bx-\by|}}{|\bx-\by|}\I_3-\frac{(\bx-\by)\otimes(\bx-\by)}{|\bx-\by|^3}e^{i\K_S|\bx-\by|}\right]
+\frac{1}{4\pi\mu_0}\frac{\K_P^2}{\K_S^2}\nabla_\by\left[\frac{(\bx-\by)\otimes(\bx-\by)}{|\bx-\by|^3}e^{i\K_P|\bx-\by|} \right]
\nonumber
\\
=&\frac{i\K_Se^{-\i\K_S\hbx\cdot\by} }{4\pi\mu_0}\left[\hbx\otimes\I_3-\hbx\otimes\hbx\otimes\hbx\right]\frac{e^{i\K_S|\bx|} }{|\bx|}
+\frac{i\K_P e^{-\i\K_P\hbx\cdot\by} }{4\pi(\lambda_0+2\mu_0)}\hbx\otimes\hbx\otimes\hbx \frac{e^{i\K_P|\bx|} }{|\bx|}+\mathcal{O}\left(\frac{1}{|\bx|^2}\right).
\label{T3}
\end{align}
The conclusion follows from Eq. \eqref{T3} and the definition \eqref{a_d}.

\bigskip

\end{document}